\numberwithin{equation}{section}
\theoremstyle{thmstyleone}%
\newtheorem{theorem}{Theorem}[section]
\newtheorem{lemma}{Lemma}[section]
\newtheorem{assumption}{Assumption}[section]
\newtheorem{proposition}{Proposition}[section]
\newtheorem{condition}{Condition}[section]
\theoremstyle{thmstyletwo}%
\newtheorem{example}{Example}[section]
\newtheorem{remark}{Remark}[section]
\theoremstyle{thmstylethree}%
\newtheorem{definition}{Definition}%
\def\Argmin{\mathop{\mathrm{Argmin}}}
\def\dom{\mathop{\mathrm{dom}}}
\def\cl{\mathop{\mathrm{cl}}}
\begin{document}

\title[Single-loop SDCAM]{Complexity and convergence analysis of a single-loop SDCAM for Lipschitz composite optimization and beyond}

%%=============================================================%%
%% GivenName	-> \fnm{Joergen W.}
%% Particle	-> \spfx{van der} -> surname prefix
%% FamilyName	-> \sur{Ploeg}
%% Suffix	-> \sfx{IV}
%% \author*[1,2]{\fnm{Joergen W.} \spfx{van der} \sur{Ploeg}
%%  \sfx{IV}}\email{iauthor@gmail.com}
%%=============================================================%%

\author[1]{Hao Zhang}\email{haaoo.zhang@connect.polyu.hk}

\author[2]{Naoki Marumo}\email{marumo@mist.i.u-tokyo.ac.jp}
%\equalcont{These authors contributed equally to this work.}

\author*[1]{Ting Kei Pong}\email{tk.pong@polyu.edu.hk}
%\equalcont{These authors contributed equally to this work.}

\author[2,3]{Akiko Takeda}\email{takeda@mist.i.u-tokyo.ac.jp}
%\equalcont{These authors contributed equally to this work.}

\affil[1]{Department of Applied Mathematics, the Hong Kong Polytechnic University, Hong Kong, China}

\affil[2]{Graduate School of Information Science and Technology, University of Tokyo, Tokyo, Japan}

\affil[3]{Center for Advanced Intelligence Project, RIKEN, Tokyo, Japan}

\abstract{We develop and analyze a single-loop algorithm for minimizing the sum of a Lipschitz differentiable function $f$, a prox-friendly proper closed function $g$ (with a closed domain on which $g$ is continuous) and the composition of another prox-friendly proper closed function $h$ (whose domain is closed on which $h$ is continuous) with a continuously differentiable mapping $c$ (that is Lipschitz continuous and Lipschitz differentiable on the convex closure of the domain of $g$). Such models arise naturally in many contemporary applications, where $f$ is the loss function for data misfit, and $g$ and $h$ are nonsmooth functions for inducing desirable structures in $x$ and $c(x)$. Existing single-loop algorithms mainly focus either on the case where $h$ is Lipschitz continuous or the case where $h$ is an indicator function of a closed convex set.
%with the exceptions of \cite{deng2025single} and \cite{tran2018smooth}, which studied the case where $h$ is an indicator function of a closed convex set and a singleton, respectively. 
In this paper, we develop a single-loop algorithm for more general possibly non-Lipschitz $h$. Our algorithm is a single-loop variant of the successive difference-of-convex approximation method (SDCAM) proposed in \cite{liu2019successive}. We show that when $h$ is Lipschitz, our algorithm exhibits an iteration complexity that matches the best known complexity result for obtaining an $(\epsilon_1,\epsilon_2,0)$-stationary point. Moreover, we show that, by assuming additionally that $\dom g$ is compact, our algorithm exhibits an iteration complexity of $\tilde{\cal O}(\epsilon^{-4})$ for obtaining an $(\epsilon,\epsilon,\epsilon)$-stationary point when $h$ is merely continuous and real-valued. Furthermore, we consider a scenario where $h$ does not have full domain and establish vanishing bounds on successive changes of iterates. Finally, in all three cases mentioned above, we show that one can construct a subsequence such that any accumulation point $x^*$ satisfies $c(x^*)\in \dom h$, and if a standard constraint qualification holds at $x^*$, then $x^*$ is a stationary point.
}

\keywords{Iteration complexity, subsequential convergence, single-loop algorithm, non-Lipschitz composite functions}

%%\pacs[JEL Classification]{D8, H51}

%%\pacs[MSC Classification]{35A01, 65L10, 65L12, 65L20, 65L70}

\maketitle

\section{Introduction}
In this paper, we consider the following structured optimization problem
\begin{equation}\label{problem}
    \min\limits_{x \in \mathbb{R}^n}~F(x):=f(x)+ g(x)+ h(c(x)),
\end{equation}
where we assume that the objective function is proper, $g$ and $h$ are prox-friendly proper closed functions with closed domains and are continuous in their respective domains, $f: \mathbb{R}^n \rightarrow \mathbb{R}$ is Lipschitz differentiable, and $c: \mathbb{R}^n\to \mathbb{R}^m$ is continuously differentiable on $\mathbb{R}^n$ and is Lipschitz continuous and Lipschitz differentiable on the closure of the convex hull of $\dom g$; the precise assumption associated with \eqref{problem} is presented in Assumption~\ref{assumption 1} below. Model problems of this form abound in applications such as data science, machine learning, and statistics, where $f$ is typically a smooth loss function for data fidelity, $g$ and $h$ are nonsmooth functions for inducing desired structures in $x$ and $c(x)$; see, e.g., \cite{charisopoulos2021low,markovsky2008structured,parekh2015convex}.

In the special case where $c$ is a linear map, the successive difference-of-convex approximation method (SDCAM) was introduced in \cite{liu2019successive} to solve problem \eqref{problem} under Assumption~\ref{assumption 1}. The core idea of SDCAM is to iteratively approximate the objective by replacing $h$ with its Moreau envelope $e_{1/\beta_t} h$, where $\{\beta_t\}$ is a positive sequence with $\beta_t \to \infty$, leading to subproblems of the form:
\begin{equation}\label{subsubsub}
\min_{x\in \mathbb{R}^n}\ F^{[t]}(x):=f(x)+ g(x)+ e_{1/\beta_t}h(c(x)).
\end{equation}
Taking advantage of the fact that Moreau envelopes are difference-of-convex (DC) functions, these problems are solved approximately by a variant of the difference-of-convex algorithm (DCA) to generate $\{x^t\}$, whose accumulation points were shown to be stationary points of \eqref{problem} under suitable assumptions. The SDCAM was later adapted to solve problems with multiple linearly-structured rank constraints in \cite{liu2020hybrid}, and was extended to handle a class of stochastic optimization problems in \cite{metel2021simple,xu2019non}.

Note that SDCAM is an example of {\em double-loop} algorithm, with the subproblems being increasingly ill-conditioned as $\beta_t\to \infty$. For these kinds of double-loop algorithm, one often needs to carefully design the update rule of the parameters describing the subproblems (e.g., the $\{\beta_t\}$ for SDCAM) and the termination criteria of the subproblem solvers for efficient practical implementations. In contrast, {\em single-loop} algorithms do not require that the subproblems be solved to the required accuracy (or any tuning of the number of inner iterations to improve the solution accuracy), and thus are often simpler to implement in practice. The study of single-loop algorithms has received much attention recently, and many algorithms of this kind have been proposed to solve \eqref{problem} under various structural assumptions; see Table~\ref{table:smoothing_methods} for a summary of the main structural assumptions and the complexity guarantees of some recent single-loop algorithms. A notable common feature is that in most of these studies, $h$ is either Lipschitz or the indicator function of a closed convex set; see \cite{bohm2021variable,kume2024variable,yao2024note,boct2020variable,deng2025single,bot2015variable,tran2018smooth}, which assumed $h$ to be Lipschitz continuous, and \cite{Alacaoglu24a,deng2025single,tran2018smooth,LuMeiXiao24,LuXiao25}, which studied the case where $h$ is the indicator function of a closed convex set. 
However, the case where $h$ is a general non-Lipschitz function is not covered by existing single-loop algorithms.
In particular, when $h$ is the $\ell_p$ quasi-norm for some $p\in (0,1)$ (see Section~\ref{sec:nonlip} for a concrete application), direct applications of existing single-loop algorithms to the corresponding \eqref{problem} do not have convergence / complexity guarantees.

\begin{table}[h]
  \centering
  \setlength{\tabcolsep}{7pt}
  {\footnotesize
  \begin{tabular}{lllllcll}
    \toprule
    Reference & \multicolumn{4}{l}{Assumptions}  & Complexity & Remark \\\cmidrule(lr){2-5}
              & $f$ & $g$ & $h$ & $c$      &                              \\\midrule
    \cite[Theorem~3.1]{bot2015variable}                &   $0$     &cvx, lip     &cvx, lip &lin  &  $\tilde{\mathcal{O}}(\epsilon^{-1})$ & $F(x) - \inf F \le \epsilon$ \\
    \cite[Theorem~3]{tran2018smooth}                &   $0$     &cvx     &$\delta_{\{b\}}$ &lin  &  ${\mathcal{O}}(\epsilon^{-1})$ & $|F(x) - \inf F| \le \epsilon$, $\|c(x) - b\|\le \epsilon$ \\
    \cite[Theorem~4]{tran2018smooth}                &   $0$     &cvx    &cvx, lip  &lin  &  ${\mathcal{O}}(\epsilon^{-1})$ & $F(x) - \inf F \le \epsilon$ \\
    \cite[Corollary~3.1]{boct2020variable}                &   $0$     &cvx     &cvx, lip &lin  &  $\mathcal{O}(\epsilon^{-1})$ & $F(x) -\inf F \le \epsilon$ \\
    \cite[Theorem~4.2]{bohm2021variable}               & s       & $0$      & wc, lip      & lin   & $\mathcal{O}(\epsilon^{-3})$       & $(\epsilon,\epsilon,0)$-stationary point \\
    \cite[Theorem~4.2]{Alacaoglu24a}              & s       & $\delta_C$   & $\delta_{\{b\}}$      & s  & $\tilde{\mathcal{O}}(\epsilon^{-4})$       & $(\epsilon,\epsilon,0)$-stationary point\tablefootnote{\label{Tablefootnoteha}We need to point out that the references \cite{Alacaoglu24a,LuMeiXiao24,LuXiao25,deng2025single} actually considered more generally an $f$ taking the form of an expectation, and their complexity results actually give the complexity for obtaining some notion of stochastic stationary points.} \\
    \cite[Theorem~3.3]{kume2024variable}               &  s      & $\delta_D$ & wc, lip      &    s      & $\mathcal{O}(\epsilon^{-{2\alpha}/{(\alpha-1)}})$        &  ${\rm dist}(0, \partial F^{[t]}(x))\le \epsilon$\\
    \cite[Corollary~1]{LuMeiXiao24}              &   s     & $\delta_C$    & $\delta_{\{b\}}$     &   s      & $\tilde{\mathcal{O}}(\epsilon^{-\max\{\theta+2,2\theta\}})$      &  $(\epsilon, \epsilon,0)$-stationary point\footnotemark[1] \\ 
    \cite[Theorem~4.7]{yao2024note}                    & $0$   & $0$      & lip  & s           & $\mathcal{O}(\epsilon_1^{-2}\epsilon_2^{-1})$      &$(\epsilon_1, \epsilon_2,0)$-stationary point\\
    \cite[Corollary~3.2]{deng2025single}                &   s     & $\delta_{\cal M}$     &cvx, lip &s  & $\mathcal{O}(\epsilon^{-3})$         &$(\epsilon, \epsilon,0)$-stationary point\footnotemark[1]    \\
    \cite[Corollary~3.5]{deng2025single}                &   s     & $\delta_{\cal M}$     &$\delta_C$ &s  &  $\tilde{\mathcal{O}}(\epsilon^{-\max\{\theta+ 2, 2\theta\}})$ & $(\epsilon, \epsilon,0)$-stationary point\footnotemark[1] \\
    \cite[Theorem~2]{LuXiao25}                &   s, cvx     & cvx     &$\delta_{\mathbb{R}^m_-}$ & s, cvx\tablefootnote{This means each component of $c$ is convex.}  &  $\tilde{\mathcal{O}}(\epsilon^{-2})$ & $|F(x) - \inf F| \le \epsilon$, $\|[c(x)]_+\|\le \epsilon$\footnotemark[1] \\
    \bottomrule
  \end{tabular}
  }
\caption{\small Some recent single-loop algorithms for solving \eqref{problem}. Here we present the main structural assumptions on $f$, $g$ and $h$ and the known complexity results. We use s, lin, cvx,  wc, lip to denote ``Lipschitz differentiable",  ``linear", ``proper, closed and convex", ``weakly convex" and ``Lipschitz continuous", respectively; here,  $\delta_D$, $\delta_{\cal M}$, $\delta_C$ and $\delta_{\{b\}}$ are the indicator functions of a closed set $D$, a compact manifold ${\cal M}$, a closed convex set $C$, and the singleton $\{b\}$, respectively, $\alpha > 1$ is a stepsize parameter in \cite[Algorithm~1]{kume2024variable}, and $\theta\ge 1$ is a parameter in the Polyak-{\L}ojasiewicz-type (PL-type) assumption on $h$ and $c$ in \cite[Assumption~1(iv)]{LuMeiXiao24} and \cite[Assumption~4]{deng2025single}; an analogous PL-type assumption was also used in \cite[Eq.~(A5)]{Alacaoglu24a}. The complexity results are either for obtaining an $(\epsilon_1, \epsilon_2, \epsilon_3)$-stationary point (see Definition~\ref{def:ed-stationary}), or an $x$ with ${\rm dist}(0, \partial F^{[t]}(x))\le \epsilon$ (where $F^{[t]}$ is defined in \eqref{subsubsub}), or an $\epsilon$-optimal solution in the fully convex setting.}    \label{table:smoothing_methods}
\end{table}

In this paper, we develop a single-loop algorithm for instances of \eqref{problem} with possibly {\em non}-Lipschitz $h$. Our algorithm is a single-loop variant of the SDCAM, where, instead of applying one step of the SDCAM subproblem solver to the objective in \eqref{subsubsub}, we apply {\em one step} of such solver to the following function 
\[
    f(x)+ g(x)+ ({\beta_t}/{\beta_{t-1}})e_{1/ \beta_{t-1}}h(c(x)),
\]
which differs from the objective in \eqref{subsubsub} by the scaling of ${\beta_t}/{\beta_{t-1}}$ in front of $e_{1/ \beta_{t-1}}h(c(x))$.
This design leads to a pseudo-descent property (see Lemma~\ref{lemma_H} below) that underlies all convergence and complexity results in Section~\ref{convergence_section}. Under suitable choices of $\{\beta_t\}$ and a set of progressively relaxed assumptions on $h$, we discuss the corresponding global complexity of the algorithm for finding an $(\epsilon_1, \epsilon_2, \epsilon_3)$-stationary point (see Definition~\ref{def:ed-stationary}), and establish the subsequential convergence along a constructible subsequence to a stationary point under standard constraint qualifications. 

Our main results are summarized in Table~\ref{tab:contribution}. We assume $h$ to be Lipschitz in Section~\ref{sec41} and obtain an iteration complexity of $\mathcal{O}(\epsilon_1^{-2}\epsilon_2^{-1})$ for finding an $(\epsilon_1, \epsilon_2, 0)$-stationary point: this matches the best known complexity results in Table~\ref{table:smoothing_methods} for this class of problems. We then {\em go beyond} Lipschitz continuity and only require $h$ to be continuous (while also assuming the compactness of $\dom g$) in Section~\ref{sec:nonlip}. We derive an iteration complexity of $\tilde{\mathcal{O}}(\epsilon^{-4})$ for finding an $(\epsilon, \epsilon, \epsilon)$-stationary point. Finally, in Section~\ref{sec:43}, we further consider some $h$ with $\dom h \neq \mathbb{R}^m$ (see Assumption~\ref{assumption 2}). In this setting with the weakest assumptions on $h$, we still manage to establish the iteration complexity for Algorithm~\ref{alg1} to generate an $(\epsilon, \infty, \epsilon)$-stationary point:\footnote{Indeed, our result is akin to \cite[Theorem~3.3]{kume2024variable} that aims to find a point at which a suitable potential function has a ``small" subgradient; see Theorem~\ref{Thm43iteration}(i) and Remark~\ref{rem:explain_cplx}.} however, there is no explicit vanishing bound on the distance from $\{c(x^t)\}$ to $\dom h$, where $\{x^t\}$ is generated by our algorithm. Nevertheless, under each of the aforementioned settings, for the sequence $\{x^t\}$ generated by our algorithm, we show that one can construct a subsequence such that any accumulation point $x^*$ satisfies $c(x^*)\in \dom h$, and if a standard constraint qualification holds at $x^*$, then $x^*$ is a stationary point of \eqref{problem}.

\begin{table}[htbp]
    \centering
    \setlength{\tabcolsep}{6pt}
    \begin{tabular}{@{}cccccc@{}}
        \toprule
        \multicolumn{2}{c}{Assumptions} & Complexity & Remark & Subsequential \\ 
        \cmidrule(lr){1-2}
        $g$ & $h$ & & & convergence \\
        \midrule
        -- & Lipschitz & $\mathcal{O}(\epsilon_1^{-2}\epsilon_2^{-1})$ & $(\epsilon_1, \epsilon_2, 0)$-stationary point & \checkmark \\
        bounded domain & $\dom h = \mathbb{R}^m$ & $\tilde{\cal O}(\epsilon^{-4})$ & $(\epsilon, \epsilon, \epsilon)$-stationary point & \checkmark \\
        bounded domain & Assumption~\ref{assumption 2} & —— & —— & \checkmark \\
        \bottomrule
    \end{tabular}
    \caption{\small Additional assumptions on $g$ and $h$ for the complexity and convergence results in this paper, on top of Assumption~\ref{assumption 1}; see Definition~\ref{def:ed-stationary} for the definition of $(\epsilon_1, \epsilon_2, \epsilon_3)$-stationary point.}
    \label{tab:contribution}
\end{table}

The remainder of this paper is organized as follows. In Section~\ref{notation}, we review some notation and preliminary materials. In Section~\ref{alg}, we present our algorithm for \eqref{problem} and establish its well-definedness. In 
Sections~\ref{sec41}, \ref{sec:nonlip} and \ref{sec:43}, we analyze the iteration complexity and global convergence properties of our proposed algorithm under progressively relaxed assumptions on $h$. %Finally, we perform numerical experiments to illustrate the performance of our algorithm in Section~\ref{sec5}.

\section{Notation and preliminaries} \label{notation}

In this paper, we use $\mathbb{R}^n$ to denote the $n$-dimensional Euclidean space, and we use $\langle \cdot, \cdot \rangle$ and $\| \cdot\|$ to denote its inner product and the associated norm, respectively. The set of $m \times n$ matrices is denoted by $\mathbb{R}^{m \times n}$.

Let $C\subseteq \mathbb{R}^n$ be a nonempty closed set. Its indicator function $\delta_{C}$ is defined as 
\[
    \delta_{C}(x):= \begin{cases}
        0 & x\in C, \\
        \infty & x \notin C. 
    \end{cases}
\]
For a point $x\in \mathbb{R}^n$, we use ${\rm dist}(x, C)$ to denote the distance from $x$ to $C$, and ${\rm Proj}_{C}(x)$ to denote the set of projections of $x$ onto $C$; it is known that this set is a singleton if $C$ is in addition convex. For a nonempty closed and convex set $C$, its horizon cone is defined as
\[
    C^{\infty} :=  \left\{ d\in \mathbb{R}^n: x + t d \in C\ \ \forall t \ge 0\right\};
\]
here $x$ is any element in $C$ and it is known that the above definition does not depend on the choice of $x\in C$.  
% Moreover, when $C$ is nonempty and closed, 

A function $\varphi: \mathbb{R}^n \rightarrow (-\infty, \infty]$ is said to be proper if $\dom \varphi :=\{x: \varphi(x) < \infty\} \neq \emptyset$.
In addition, it is said to be closed if it is lower semicontinuous. Following \cite[Definition~8.3]{rockafellar2009variational}, for a proper closed function $\varphi$, we say that $v$ is a regular subgradient of $\varphi$ at $\Bar{x}\in \dom \varphi$, denoted as $v \in \widehat{\partial}\varphi(\Bar{x})$, if
\begin{align}
    \liminf\limits_{\substack{x\to \bar{x}\\ x\neq \bar{x}}} \frac{\varphi(x) -\varphi(\bar{x}) - \langle v, x - \bar{x}\rangle}{\|x -\bar{x}\|} \ge 0. \nonumber
\end{align}
Additionally, the limiting and horizon subdifferentials at $x \in \dom \varphi$ are defined as:
\begin{align}
    \partial \varphi(x) &:= \left\{ v: \exists v^t \to v,~ x^t\overset{\varphi}{\to} x~\text{with}~v^t \in \widehat{\partial} \varphi(x^t)~\text{for each}~t \right\}, \nonumber \\
    \partial^{\infty}\varphi(x) &:= \left\{ v: \exists \alpha_t\downarrow 0,~ \alpha_t v^t \to v, ~x^t \overset{\varphi}{\to}x~\text{with}~v^t \in \widehat{\partial}\varphi(x^t)~\text{for each}~t \right\}.\nonumber
\end{align}
Here, $x^t \overset{\varphi}{\rightarrow} x$ means that $\varphi(x^t) \rightarrow \varphi(x)$ and $x^t \rightarrow x$.
For any $x\in \dom \varphi$, we have the following properties:
\begin{align}
    \left\{ v: \exists v^t \to v, ~x^t\overset{\varphi}{\to}x~\text{with } v^t \in \partial \varphi(x^t)~\text{for each}~t \right\} &\subseteq \partial \varphi(x), \nonumber \\
    \left\{ v: \exists \alpha_t \downarrow 0, ~\alpha_t v^t \to v, ~x^t \overset{\varphi}{\to}x~\text{with}~v^t\in \partial \varphi(x^t)~\text{for each}~t \right\} &\subseteq \partial^{\infty}\varphi(x); \nonumber
\end{align}
see \cite[Proposition~8.7]{rockafellar2009variational}.
Recall that from \cite[Exercise~8.8(b)]{rockafellar2009variational}, the limiting subdifferential at $x$ reduces to $\{\nabla \varphi(x)\}$ if $\varphi$ is continuously differentiable at $x$. Furthermore, when $\varphi$ is proper and convex, the limiting subdifferential of $\varphi$ reduces to the classical convex subdifferential \cite[Proposition~8.12]{rockafellar2009variational}.

For a proper closed function $\varphi: \mathbb{R}^n \rightarrow (-\infty, \infty]$ with $\inf \varphi> -\infty$, its Moreau envelope for any given $\gamma >0$ is defined as 
\[
    e_{\gamma} \varphi(x):= \inf\limits_{z\in \mathbb{R}^n} \left\{ \frac{1}{2\gamma}\|x - z\|^2 + \varphi(z) \right\}.
\]
Notice that this function is defined everywhere (see \cite[Theorem 1.25]{rockafellar2009variational}).  The infimum in the definition of Moreau envelope is attained at the so-called proximal mapping of $\gamma \varphi$ at $x$, which is defined as 
\[
    {\rm Prox}_{\gamma\varphi}(x):=\Argmin\limits_{u \in \mathbb{R}^n} \left\{ \frac{1}{2\gamma}\|x -u\|^2+ \varphi(u) \right\}.
\]
For a proper closed convex function $\varphi: \mathbb{R}^n \rightarrow (-\infty, \infty]$, its horizon cone is defined as
\[
    {\rm hzn}~\varphi := \left\{ d\in \mathbb{R}^n: \varphi(x+d) \le \varphi(x)\ \ \forall \, x \in \dom \varphi \right\}.
\]
Recall from \cite[Theorem~8.7]{Rockafellar+1970} that ${\rm hzn}~\varphi$ equals the horizon cone of any nonempty level set of $\varphi$. 

For a map $G: \mathbb{R}^n \rightarrow \mathbb{R}^m$, we say that it is $\mathcal{K}$-convex for a closed convex cone $\mathcal{K}$ if
\[
    \lambda G(x) + (1 - \lambda)G(z) - G(\lambda x + (1 - \lambda)z) \in \mathcal{K} \quad\quad \forall\, x,z \in \mathbb{R}^n,\ \lambda \in [0,1].
\]
For a continuously differentiable $G:\mathbb{R}^n\to\mathbb{R}^m$, we use $J_G(x)$ to denote its Jacobian at $x$, which is the linear map defined as
\[
J_G(x)h := \lim_{t\to 0}\frac{G(x+th) - G(x)}{t}\ \ \ \forall h\in \mathbb{R}^n.
\]

We next discuss the optimality conditions for \eqref{problem}. We say that $x^*$ is a stationary point of \eqref{problem} if $x^*$ satisfies
\[
    0 \in \nabla f(x^*)+ \partial g(x^*)+ J_c(x^*)^T \partial h(c(x^*)).
\]
It can be shown that any local minimizer of \eqref{problem} satisfying constraint qualifications such as those described in Theorem~10.6 and Corollary~10.9 of \cite{rockafellar2009variational} (see \eqref{CQ} below for a precise condition) is a stationary point. We say that $x$ is an $\epsilon $-stationary point if
\begin{align}
  {\rm dist} \left(0,\nabla f(x)+\partial g(x)+ J_c(x)^T\partial h(c(x)) \right) \le \epsilon, \nonumber
\end{align}
which can be seen as a natural relaxation of the notion of stationarity. 
However, when $h$ is nonsmooth, such a point can be computationally intractable; see e.g., \cite{yao2024note,kornowski2021oracle,zhang2020complexity,tian2024no,jordan2023deterministic}.
In this paper, we adopt the following weaker notion of $(\epsilon_1, \epsilon_2,\epsilon_3)$-stationary point. We note that the notion of $(\epsilon_1,\epsilon_2,0)$-stationary point has been adopted in \cite{zhang2020complexity,jordan2023deterministic}.
\begin{definition}
  \label{def:ed-stationary}
  For $\epsilon_1$, $\epsilon_2>0$ and $\epsilon_3 \ge 0$, a point $x \in \dom g$ is called an $(\epsilon_1, \epsilon_2,\epsilon_3)$-stationary point for \eqref{problem} if there exist $y \in \mathbb{R}^m$ and $z\in \mathbb{R}^n$ such that
  \begin{align}
    {\rm dist} \left(0,\nabla f(x)+ \partial g(x)+ J_c(z)^T\partial h(y)\right) \le \epsilon_1, \quad \|c(x) -y\| \le \epsilon_2, \ \ \|x - z\|\le \epsilon_3. \nonumber
    % \label{eq:ed-stationary}
  \end{align}
\end{definition}

We end this section with the following lemma, which will be useful in Section~\ref{convergence_section} for deducing subsequential convergence from complexity bounds.

\begin{lemma} \label{choose_sub_seq}
    Let $\{\tau_k\}\subseteq \mathbb{R}_+$ satisfy $\lim_{k\rightarrow \infty} \tau_k =0$ and let $\{a_k\} \subseteq \mathbb{R}_+$ be such that $\frac{1}{T} \sum_{i=1}^T a_i \le \tau_T$ for all $T \ge 1$. Then, there exists a subsequence $\{a_{T_k}\}$ satisfying
    \[
      T_k > 1 \ \ {\rm and}\ \  a_{T_k} \le \frac{1}{T_k-1}\sum_{j=1}^{T_k-1} a_j \le \tau_{T_k-1}\ \ \ \forall k\ge 0.
    \]
\end{lemma}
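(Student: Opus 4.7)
The plan is to argue by contradiction on the existence of infinitely many qualifying indices. Suppose no such subsequence exists; then only finitely many $T>1$ satisfy $a_T \le \frac{1}{T-1}\sum_{j=1}^{T-1}a_j$, so there is some $N \ge 1$ with $a_T > \frac{1}{T-1}\sum_{j=1}^{T-1}a_j$ for every $T>N$.

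Setting $S_T:=\sum_{j=1}^{T} a_j$, I would rewrite this strict inequality as $S_T - S_{T-1} > S_{T-1}/(T-1)$, which rearranges to $S_T/T > S_{T-1}/(T-1)$. Hence the Cesàro averages $\{S_T/T\}_{T\ge N}$ are strictly increasing. Combined with the standing bound $S_T/T \le \tau_T \to 0$, this already looks contradictory: a strictly increasing nonnegative sequence cannot tend to $0$ unless it is identically $0$.

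The one subtle point, and the main (minor) obstacle, is the degenerate case $S_N = 0$, in which the ``strictly increasing to zero'' clash does not bite instantly. Here I would note that nonnegativity of the $a_j$ forces $a_1 = \cdots = a_N = 0$, and then the strict inequality at $T=N+1$ gives $a_{N+1} > S_N/N = 0$, so $S_{N+1}/(N+1) > 0$; strict monotonicity from $T = N+1$ onward then yields $S_T/T \ge S_{N+1}/(N+1) > 0$ for all $T > N+1$, again contradicting $\tau_T \to 0$. This closes the contradiction and produces infinitely many indices $T>1$ with $a_T \le S_{T-1}/(T-1)$. Enumerating them in increasing order as $\{T_k\}_{k\ge 0}$ and invoking the standing hypothesis with $T = T_k - 1$ yields the chain $a_{T_k} \le \frac{1}{T_k-1}\sum_{j=1}^{T_k-1} a_j \le \tau_{T_k-1}$, as required.
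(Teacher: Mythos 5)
Your proof is correct and follows essentially the same route as the paper's: both argue by contradiction that the Ces\`aro averages $b_T=\frac1T\sum_{j\le T}a_j$ would otherwise be eventually strictly increasing, which is incompatible with $0\le b_T\le\tau_T\to 0$, and both then read off $a_{T_k}\le b_{T_k-1}\le\tau_{T_k-1}$ from the selection rule (your condition $a_T\le\frac{1}{T-1}\sum_{j<T}a_j$ is algebraically equivalent to the paper's $b_T\le b_{T-1}$). Your explicit treatment of the degenerate case $S_N=0$ is a small point of extra care that the paper glosses over, but it does not change the argument.
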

\begin{proof}
    Define $b_T := \frac{1}{T} \sum_{k =1}^T a_k$. We claim that there exists a subsequence $\{b_{T_k}\}$ with $T_k > 1$ such that $b_{T_k} \le b_{T_k-1}$ for all $k$. Suppose not. Then $\{b_T\}$ is strictly increasing. Since $\{\tau_T\}$ converges to zero as $T \rightarrow \infty$ and $0 \le b_T \le \tau_T$, we have $\lim_{T\rightarrow \infty} b_T =0$, which leads to a contradiction. Therefore, there exists a subsequence $\{b_{T_k}\}$ such that $b_{T_k} \le b_{T_k-1}$.

    Next, since $b_{T_k} \le b_{T_k-1}$, we have
    \begin{equation}\label{abrelation}
        a_{T_k} = T_kb_{T_k} - (T_k -1) b_{T_k-1} = T_k(b_{T_k}- b_{T_k-1})+ b_{T_k-1} \le b_{T_k-1} \le  \tau_{T_k -1}.
    \end{equation}
\end{proof}
\begin{remark}\label{construct_sub_sq}
    From the proof of Lemma~\ref{choose_sub_seq}, we may generate the index set $\{T_k\}$ by checking when $b_T \le b_{T-1}$ holds, where $b_T := \frac{1}{T} \sum_{k=1}^{T} a_k$. In addition, one can see from \eqref{abrelation} that the conclusion of Lemma~\ref{choose_sub_seq} actually holds for any $\{T_k\}$ with $T_k > 1$ that corresponds to a nonincreasing subsequence $\{b_{T_k}\}$ of $\{b_T\}$, i.e., $b_{T_k} \le b_{T_{k-1}}$ for all $k$. 
\end{remark}

\section{Algorithmic framework} \label{alg}

We present our algorithm for \eqref{problem} under the following general assumption and discuss its well-definedness.
We will impose further conditions in Section~\ref{convergence_section} to study further properties of the algorithm such as iteration complexity and subsequential convergence.
\begin{assumption}\label{assumption 1}
Consider \eqref{problem}.
    \begin{enumerate}[{\rm (i)}]
        \item $f: \mathbb{R}^n \rightarrow \mathbb{R}$ is Lipschitz differentiable. In other words, there exists $L>0$ such that
            \[
                \| \nabla f(x) - \nabla f(z) \| \le L\|x - z\|~~~\forall x,z\in \mathbb{R}^n.
            \]
        \item  $c: \mathbb{R}^n \rightarrow \mathbb{R}^m$ is continuously differentiable on $\mathbb{R}^n$ and there exists $L_c > 0$ such that
            \[
                \|J_c(x) - J_c(z)\| \le L_c \|x - z\|~~~\forall x, z\in \cl\left({\rm conv}(\dom g)\right),
            \]
            where $J_c$ denotes the Jacobian of $c$ and ${\rm conv}(\dom g)$ denotes the convex hull of $\dom g$.
            In addition, $J_c$ is bounded, i.e., there exists $M_c >0$ such that
            \[
                \|J_c(x) \| \le M_c~~~\forall x \in \cl\left( {\rm conv}(\dom g)\right).\footnote{We recall that this condition implies that $\|c(x) - c(y)\|\le M_c\|x-y\|$ for all $x \in \cl\left( {\rm conv}(\dom g)\right)$.}
            \]
        \item $g$ and $h$ are proper closed functions. Additionally, $g$ and $h$ are continuous on their domains,  $\dom g$ and $\dom h$ are closed, and it holds that
            \[
                \dom g \cap c^{-1}(\dom h) \neq \emptyset.
            \]
        Moreover, the proximal mappings of $\gamma g$ and $\gamma h$ are easy to compute for every $\gamma >0$.
        Finally, $\inf\{f+g\} > -\infty$ and $\inf h \ge 0$.
    \end{enumerate}
\end{assumption} 

In the case when $c$ is a {\em linear} map, the successive difference-of-convex approximation method (SDCAM) was proposed in \cite{liu2019successive} for solving \eqref{problem} that satisfies Assumption~\ref{assumption 1}. In each iteration of the SDCAM, one approximates the objective function by replacing the nonsmooth function $h$ by its Moreau envelope $e_{1/ \beta_t}h$ for some $\beta_t > 0$, i.e., 
\begin{equation}\label{hbeta}
    f(x) + g(x) + e_{1/ \beta_t}h(c(x)).
\end{equation}
The resulting subproblems are difference-of-convex (DC) optimization problems, thanks to the fact that Moreau envelopes are DC functions. These subproblems are then solved approximately to generate $x^t$ via the nonmonotone proximal gradient method with majorization (NPG$_{\rm major}$); see \cite[Appendix~A]{liu2019successive}. 
%Intuitively, this approximation scheme is valid because  $h_{1/\beta_t}$  epi-converges to $h$ as $t\to \infty$, as shown in \cite[Proposition 7.4]{rockafellar2009variational}.
Assuming that $\{\beta_t\}$ is increasing with $\lim_{t\to\infty}\beta_t = \infty$ and that the subproblems are solved sufficiently accurately, it was shown in \cite{liu2019successive} that any accumulation point of $\{x^t\}$ is a stationary point of \eqref{problem}, under a standard constraint qualification. 

Here, we extend the SDCAM to handle \eqref{problem} under Assumption~\ref{assumption 1}. In particular, the $c$ is not necessarily linear as in \cite{liu2019successive}. In addition, to simplify the algorithmic design, we allow the flexibility for updating $\beta_t$ (and hence evolving the subproblem, which depends on the current iterate and $\beta_t$) {\em every iteration}, following recent works such as \cite{bohm2021variable,boct2020variable,deng2025single,XuPongSze25}. Our algorithm, which we call the single-loop successive difference-of-convex approximation method (SDCAM$_{\mathds{1}\ell}$), is presented in Algorithm~\ref{alg1} below. In this algorithm, in the $t$-th iteration, instead of minimizing \eqref{hbeta} approximately via the NPG$_{\rm major}$ as in \cite{liu2019successive}, we apply {\em one step} of a natural variant of the NPG$_{\rm major}$ to the following function:
\begin{equation}\label{hbeta2}
    f(x)+ g(x)+ (\beta_t/\beta_{t-1})e_{1/ \beta_{t-1}}h(c(x)).
\end{equation}
Specifically, we take advantage of the fact that the Moreau envelope $e_\lambda h$ with $\lambda > 0$ can be written as $e_\lambda h(y) = \frac1{2\lambda}\|y\|^2 - D_\lambda(y)$ for some real-valued convex function $D_\lambda$ (see \cite[Eq.~(6)]{liu2019successive}) and hence
\begin{equation}\label{subgradient22}
J_c(x)^T {\rm Prox}_{\lambda h}(c(x))\subseteq \lambda J_c(x)^T\partial D_\lambda(c(x))  = \lambda \partial (D_\lambda\circ c)(x),
\end{equation}
where the inclusion follows from the display before \cite[Eq.~(7)]{liu2019successive} (upon setting ${\cal A}$ to be the identity map there), and the equality follows from Proposition~8.12 (see also Corollary~8.11) and Theorem~10.6 of \cite{rockafellar2009variational}. The $x$-subproblem \eqref{get_xk_line} (for $t\ge 1$) is derived by rewriting the function in \eqref{hbeta2} as
\[
f(x) + ({\beta_t}/{2})\|c(x)\|^2 + g(x)- ({\beta_t}/{\beta_{t-1}})D_{1/\beta_{t-1}}(c(x)),
\]
linearizing, at $x^t$, the smooth part $f(\cdot) + \frac{\beta_t}{2}\|c(\cdot)\|^2$ using its gradient and $D_{1/\beta_{t-1}}(c(\cdot))$ using a subgradient obtained in \eqref{subgradient22}, and adding the proximal term $\|x-x^t\|^2/\mu$, where $\mu$ is found via backtracking to satisfy Condition~\ref{backtrack condition} (see the {\bf if}-loop). We then update $\{\beta_t\}$ judiciously. Here, we choose a positive nondecreasing $\{\beta_t\}$ such that $\lim_{t\to\infty}\beta_t = \infty$ and $\lim_{t\to\infty}\beta_t/\beta_{t-1}=1$; note that these conditions guarantee that $\frac{\beta_t}{\beta_{t-1}} e_{1/ \beta_{t-1}}h$ epi-converges to $h$ (see \cite[Proposition~7.4(d)]{rockafellar2009variational} and the discussion after its proof), which relates the set of minimizers of the subproblem objective \eqref{hbeta2} to that of the original objective in \eqref{problem} (see \cite[Section~7E]{rockafellar2009variational}). The use of \eqref{hbeta2} (instead of \eqref{hbeta}) plays a key role in our algorithmic design as it leads to the (pseudo-)descent lemma (see Lemma~\ref{lemma_H} below), which is the basis of our complexity and subsequential convergence analysis in Section~\ref{convergence_section}.

\begin{algorithm}[h]
	%\textsl{}\setstretch{1.8}
	\caption{SDCAM$_{\mathds{1}\ell}$}
	\label{alg1}
	\begin{algorithmic}
		\STATE \textbf{Input:} Choose  $\mu_{\rm max} > \mu_{-1}>0$, $\rho \in (0,1)$, $\eta\ge 1$,  and  $x^0 \in \dom g$. Pick a positive nondecreasing sequence $\{\beta_t\}$ that satisfies $\lim_{t\to\infty}\beta_t = \infty$ and $\lim_{t\to\infty}\beta_t/\beta_{t-1}=1$.
        \\
        \STATE \textbf{Initialization:} $t = 0$, $\mu = \mu_{-1}$, $y^0 \in \dom h$.
		\STATE \textbf{Repeat} 
            \begin{align}
                 &\tilde{x} \in  \mathop{\rm Argmin}_{x\in \mathbb{R}^n}\left\{
                 \langle\nabla f(x^t)+ \beta_tJ_c(x^t)^T(c(x^t) -y^t),x \rangle + \frac1\mu\|x - x^t\|^2 + g(x)\right\}. \label{get_xk_line}\\
                &\textbf{if} \mbox{ Condition~\ref{backtrack condition} holds:} \nonumber \\
                &\ \ \ \ x^{t+1} = \tilde{x},\  \ \mu_t = \mu, \nonumber\\
                & \ \ \ \ y^{t+1} \in {\rm Prox}_{(1/ \beta_t) h}(c(x^{t+1})), \label{get_uk_line}\\
                &\ \ \ \  t \leftarrow t+1,\ \ \mu \leftarrow \min\{\mu_{\max},\eta\mu\} \quad (\textbf{successful iteration}) \nonumber\\
                &\textbf{else} \nonumber \\
                &\ \ \ \ \mu \leftarrow \rho \mu \quad (\textbf{unsuccessful iteration}) \nonumber
            \end{align}
        \STATE \textbf{Until }convergence
	\end{algorithmic}
\end{algorithm}

\begin{condition} \label{backtrack condition}
    \begin{enumerate}[{\rm (i)}]
            \item $\displaystyle \|c(\tilde{x}) - c(x^t)\| \le \sqrt{\frac{1}{\mu\beta_t}}\|\tilde{x} -x^t\|$;
        \item $\displaystyle f(\tilde{x}) + g(\tilde{x}) + \frac{\beta_{t}}{2}\|c (\tilde{x}) - y^t\|^2  \le f(x^t) + g(x^t) + \frac{\beta_{t}}{2}\| c(x^t) - y^t\|^2 - \frac{1}{2\mu}\| \tilde{x} - x^t\|^2 $.
    \end{enumerate}
\end{condition}

\subsection{Well-definedness and bounds on the number of unsuccessful iterations}

We will show that Algorithm~\ref{alg1} is well-defined in the sense that the number of unsuccessful iterations for each $t$ is finite. We will also derive a bound on the total number of unsuccessful iterations prior to obtaining $\mu_t$, for each $t\ge 0$. We start with the following auxiliary lemma.
\begin{lemma}\label{f+beta_quad}
    Suppose that Assumption~\ref{assumption 1} holds. Let $x$, $z\in \dom g$ and $y \in \mathbb{R}^m$. Then, for any $\beta>0$, it holds that 
    \begin{align}
        &\left(f(z)+ \frac{\beta}{2}\| c(z) -y\|^2\right) - 
        \left(f(x)+\frac{\beta}{2}\|c(x) -y\|^2\right) \nonumber \\
        &\le \left\langle \nabla f(x)+ \beta J_c(x)^T(c(x) - y), z-x \right\rangle + \frac{L+ ( L_c\|c(x) - y\|+ M_c^2 )\beta}{2}\|z - x\|^2. \nonumber
    \end{align}
\end{lemma}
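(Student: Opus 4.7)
The plan is to split the left-hand side into the difference in $f$ and the difference in the quadratic term $\frac{\beta}{2}\|c(\cdot)-y\|^2$, and bound each piece using the smoothness hypotheses in Assumption~\ref{assumption 1}. For the $f$-part, the standard descent lemma for Lipschitz-differentiable functions immediately yields
\[
f(z)-f(x)\le \langle \nabla f(x),\,z-x\rangle + \tfrac{L}{2}\|z-x\|^2.
\]

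For the quadratic part I would first expand
\[
\tfrac{\beta}{2}\|c(z)-y\|^2-\tfrac{\beta}{2}\|c(x)-y\|^2=\tfrac{\beta}{2}\|c(z)-c(x)\|^2+\beta\langle c(z)-c(x),\,c(x)-y\rangle.
\]
Since $x,z\in\dom g\subseteq \cl({\rm conv}(\dom g))$, the segment $[x,z]$ lies in $\cl({\rm conv}(\dom g))$ by convexity, so the bound $\|J_c\|\le M_c$ on that set gives $\|c(z)-c(x)\|\le M_c\|z-x\|$, and hence $\tfrac{\beta}{2}\|c(z)-c(x)\|^2\le \tfrac{\beta M_c^2}{2}\|z-x\|^2$. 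To isolate a first-order term in the remaining inner product, write $c(z)-c(x)=J_c(x)(z-x)+[c(z)-c(x)-J_c(x)(z-x)]$ and use the Lipschitz differentiability of $c$ on $\cl({\rm conv}(\dom g))$ together with the standard quadratic-remainder estimate to get $\|c(z)-c(x)-J_c(x)(z-x)\|\le \tfrac{L_c}{2}\|z-x\|^2$. A Cauchy--Schwarz bound on the remainder term then gives
\[
\beta\langle c(z)-c(x),\,c(x)-y\rangle\le \beta\langle J_c(x)^T(c(x)-y),\,z-x\rangle+\tfrac{\beta L_c\|c(x)-y\|}{2}\|z-x\|^2.
\]

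Adding the three estimates and grouping coefficients of $\|z-x\|^2$ produces exactly the claimed inequality with constant $\tfrac{L+(L_c\|c(x)-y\|+M_c^2)\beta}{2}$.

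The only subtle point is justifying the Lipschitz-type bounds along the segment $[x,z]$ from hypotheses stated only on $\cl({\rm conv}(\dom g))$; the convexity of this set handles it cleanly, which is presumably the reason the assumption is phrased that way. Beyond that, the argument is a routine combination of descent lemmas, so no serious obstacle is expected.
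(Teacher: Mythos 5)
Your proposal is correct and follows essentially the same route as the paper's proof: the descent lemma for $f$, the same three-term expansion of $\frac{\beta}{2}\|c(z)-y\|^2$, the bound $\|c(z)-c(x)\|\le M_c\|z-x\|$, and the quadratic-remainder estimate $\|c(z)-c(x)-J_c(x)(z-x)\|\le \frac{L_c}{2}\|z-x\|^2$ (which the paper justifies by citing Lemma~3.2 of \cite{drusvyatskiy2019efficiency}) combined with Cauchy--Schwarz. Your remark about the segment $[x,z]$ lying in $\cl({\rm conv}(\dom g))$ correctly identifies why the assumption is phrased on that set.
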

\begin{proof}
    Notice that 
    \begin{align*}
        &\frac{\beta}{2}\|c(z) -y\|^2 =\frac{\beta}{2}\|c(z) -c(x)\|^2 + \beta\left\langle c(z) - c(x), c(x) -y \right\rangle + \frac{\beta}{2}\|c(x) - y\|^2 \\
        &\overset{(a)}{\le} \frac{M_c^2\beta}{2}\|z - x\|^2+  \beta\left\langle c(z) - c(x), c(x) -y \right\rangle + \frac{\beta}{2}\|c(x) - y\|^2 \\
        &= \frac{M_c^2\beta}{2}\|z - x\|^2+  \beta\left\langle c(z) - c(x)- J_c(x)(z-x), c(x) -y \right\rangle\\
        & \ \ \ \ +  \beta\left\langle J_c(x)(z-x), c(x) -y \right\rangle + \frac{\beta}{2}\|c(x) - y\|^2 \\
        & \le \frac{M_c^2\beta}{2}\|z - x\|^2+  \beta\|c(z) - c(x)- J_c(x)(z-x)\| \|c(x) -y\|\\
        & \ \ \ \ +  \beta\left\langle J_c(x)(z-x), c(x) -y \right\rangle + \frac{\beta}{2}\|c(x) - y\|^2 \\
        &\le \frac{\beta}{2}\|c(x) -y\|^2 + \left\langle \beta J_c(x)^T(c(x) -y), z-x \right\rangle + \frac{( L_c\|c(x) -y\|+M_c^2 )\beta}{2}\|z-x\|^2,
    \end{align*}
    where $(a)$ holds because of the definition of $M_c$ in Assumption~\ref{assumption 1}(ii), the last inequality holds because of the definition of $L_c$ in Assumption~\ref{assumption 1}(ii) and \cite[Lemma~3.2]{drusvyatskiy2019efficiency}.
    Next, since $f$ is Lipschitz differentiable with modulus $L$ (see Assumption~\ref{assumption 1}(i)), we have 
    \[
        f(z) \le f(x) +\langle \nabla f(x), z -x\rangle + \frac{L}{2}\|z -x\|^2. 
    \]
    The desired inequality now follows upon summing the above two displays.
\end{proof}
The next proposition establishes the well-definedness of Algorithm~\ref{alg1}.
\begin{proposition}[Well-definedness of Algorithm~\ref{alg1}] \label{bounded_mu}
    Suppose that Assumption~\ref{assumption 1} holds. Suppose that $x^t$, $y^t$ and $\mu_{t-1}$ are generated by Algorithm~\ref{alg1} for some $t\ge 0$. Then there are at most 
    \[
        \left \lceil -\log_{\rho}((L+(L_c\|c(x^t) - y^t\|+M_c^2)\beta_{t}  )\,\mu_{t-1}) -\log_{\rho} \eta \right \rceil
    \] 
     unsuccessful iterations before the next successful iteration. Moreover, it holds that
     \[
     \mu_t \ge \frac{\rho}{L+ (L_c\|c(x^t) - y^t\|+M_c^2)\beta_{t}}.
     \]
\end{proposition}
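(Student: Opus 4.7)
The plan is to use Lemma~\ref{f+beta_quad} to identify a simple sufficient condition on $\mu$ under which both parts of Condition~\ref{backtrack condition} hold, and then to bound the number of backtracking steps and the resulting $\mu_t$ by a standard geometric-decrement argument.

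Set $A_t := L + (L_c\|c(x^t)-y^t\| + M_c^2)\beta_t$. First, apply Lemma~\ref{f+beta_quad} with $x = x^t$, $z = \tilde{x}$, $y = y^t$, and $\beta = \beta_t$. Since $\tilde{x}$ minimizes the strongly convex objective in \eqref{get_xk_line}, comparing that objective at $\tilde{x}$ with its value at $x^t$ yields
\[
  \langle \nabla f(x^t) + \beta_t J_c(x^t)^T(c(x^t)-y^t), \tilde{x}-x^t\rangle + g(\tilde{x}) - g(x^t) \le -\frac{1}{\mu}\|\tilde{x}-x^t\|^2.
\]
Adding this to the Lemma~\ref{f+beta_quad} bound reduces the left-hand side of Condition~\ref{backtrack condition}(ii) to at most $\bigl(\tfrac{A_t}{2} - \tfrac{1}{\mu}\bigr)\|\tilde{x}-x^t\|^2$, so $\mu \le 1/A_t$ is sufficient for Condition~\ref{backtrack condition}(ii). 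For Condition~\ref{backtrack condition}(i), the Lipschitz bound $\|c(\tilde{x})-c(x^t)\| \le M_c\|\tilde{x}-x^t\|$ (from Assumption~\ref{assumption 1}(ii), valid since $\tilde{x},x^t\in\dom g$) combined with $A_t \ge M_c^2\beta_t$ shows that $\mu \le 1/A_t$ forces $M_c \le 1/\sqrt{\mu\beta_t}$, as required.

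Next, for the backtracking count, the trial $\mu$ at iteration $t$ starts at some value bounded above by $\eta\mu_{t-1}$ (by the update rule $\mu \leftarrow \min\{\mu_{\max},\eta\mu\}$ and $\eta\ge 1$), and each unsuccessful step multiplies $\mu$ by $\rho\in(0,1)$. Hence after $k$ unsuccessful steps, $\mu\le \rho^k\eta\mu_{t-1}$, and by the sufficient condition above the iteration must succeed once $\rho^k\eta\mu_{t-1}\le 1/A_t$. Solving for the smallest such $k$ gives $k\ge -\log_\rho(A_t\mu_{t-1}) - \log_\rho\eta$; taking the ceiling yields the first claim.

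Finally, for the lower bound on $\mu_t$, if at least one unsuccessful step occurred at iteration $t$, then the last failed trial had $\mu > 1/A_t$ (otherwise our sufficient condition would have forced success), so after the next multiplication by $\rho$ one obtains $\mu_t > \rho/A_t$. The main subtlety I anticipate is the edge case with no backtracking at iteration $t$, where $\mu_t$ equals the starting trial value; handling this will require a case distinction tracing back through the algorithm's update rule, ensuring $\mu_t$ does not drop below $\rho/A_t$ by combining the positivity of $\mu_{-1}$ with the $\eta\ge 1$ amplification at each successful step.
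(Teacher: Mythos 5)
Your proof follows essentially the same route as the paper's: Lemma~\ref{f+beta_quad} combined with the minimality of $\tilde{x}$ in \eqref{get_xk_line} (compared against $x=x^t$) shows that $\mu \le 1/A_t$ with $A_t := L+(L_c\|c(x^t)-y^t\|+M_c^2)\beta_t$ forces both parts of Condition~\ref{backtrack condition}, and the geometric decrement from the starting trial value $\le \eta\mu_{t-1}$ then gives the stated count. Regarding the edge case you flag for the lower bound on $\mu_t$ (acceptance of the very first trial value, which could in principle be smaller than $\rho/A_t$): the paper's own proof does not treat it either, stopping after the sufficiency of $\mu\le 1/A_t$ and the unsuccessful-iteration count, so you are not omitting an argument the paper supplies; note only that your sketched resolution via positivity of $\mu_{-1}$ and the $\eta\ge 1$ amplification would not by itself deliver $\mu_t\ge\rho/A_t$, since $A_t$ involves $\|c(x^t)-y^t\|$ and is not monotone in $t$.
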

\begin{proof}
    Let 
    \[
        \bar n_t := \left \lceil -\log_{\rho} ((L+(L_c\|c(x^t) - y^t\|+M_c^2)\beta_{t} )\,\mu_{t-1}) -\log_{\rho} \eta\right \rceil.
    \]
    If $\bar n_t$ unsuccessful iterations are invoked, then, in view of the definition of $\mu_{t-1}$, we see that the $\mu$ immediately before the next successful iteration has to satisfy
    \[
        \mu \le \rho^{\bar n_t} \eta\mu_{t-1} \le \frac{1}{L+ (L_c\|c(x^t) - y^t\|+M_c^2)\beta_{t}}.
    \] 
    To complete the proof, it suffices to show that if $\mu \le \frac{1}{L+ (L_c\|c(x^t) - y^t\|+M_c^2)\beta_t }$, then this $\mu$ together with the corresponding $\tilde{x}$ in \eqref{get_xk_line} satisfies Condition~\ref{backtrack condition}. 
    To this end, notice that for this $\mu$ and the corresponding $\tilde{x}$ in \eqref{get_xk_line}, we have
    \begin{align}
        &f(\tilde{x})+ g(\tilde{x}) + \frac{\beta_{t}}{2}\| c(\tilde{x}) - y^t\|^2 \nonumber \\
        &\le f(x^t) + \langle \nabla f(x^t) + \beta_{t} J_c(x^t)^T(c(x^t) - y^t), \tilde{x} - x^t\rangle + g(\tilde{x}) + \frac{\beta_{t}}{2}\|c(x^t) - y^t\|^2 \nonumber  \\
        &\ \ \ \ + \frac{L+\left(L_c\| c(x^t) - y^t\|+M_c^2 \right)\beta_{t}}{2}\|\tilde{x} - x^t\|^2 \nonumber \\
        &\le f(x^t) + \langle \nabla f(x^t) + \beta_{t} J_c(x^t)^T(c(x^t) - y^t), \tilde{x} - x^t\rangle + \frac{1}{\mu}\|\tilde{x} - x^t\|^2+ g(\tilde{x})   \nonumber  \\
        & \ \ \ \  +\frac{\beta_{t}}{2}\|c(x^t) -y^t\|^2- \frac{1}{2\mu} \| \tilde{x} -x^ t\|^2  \nonumber \\
        & \le  f(x^t) + g(x^t)+ \frac{\beta_{t}}{2}\|c(x^t) - y^t\|^2 - \frac{1}{2\mu} \|\tilde{x} - x^t\|^2, \nonumber
        % \label{f+g+beta_quad}
    \end{align}
    where the first inequality holds upon letting $(z,x,y,\beta) = (\tilde{x},x^t,y^t,\beta_t)$ in Lemma~\ref{f+beta_quad}, the second inequality holds because $\mu \le \frac{1}{L+ (L_c\|c(x^t) - y^t\|+M_c^2)\beta_{t}}$ and the last inequality holds because of \eqref{get_xk_line}. Next, we also have
    \begin{align}
        \| c(\tilde{x}) - c(x^t)\| \le M_c \| \tilde{x} - x^t\| \le \sqrt{\frac{1}{\beta_t \mu}} \| \tilde{x} - x^t\|,   \nonumber
    \end{align}
    where the first inequality holds because of Assumption~\ref{assumption 1}(ii) and the last inequality holds because $\mu \le \frac{1}{L+ (L_c\|c(x^t) - y^t\|+ M_c^2)\beta_t}$.
\end{proof}
We next derive an important (pseudo-)descent lemma for the sequence generated by Algorithm~\ref{alg1}. This lemma will be invoked repeatedly in our analysis.
\begin{lemma} \label{lemma_H}
    Suppose that Assumption~\ref{assumption 1} holds. Define $H(x,\beta,y) := f(x) + g(x) + \frac{\beta}2\|c(x) - y\|^2 + h(y)$. Let $\{x^t\}$ and $\{y^t\}$ be generated by Algorithm~\ref{alg1}. Then, it holds that for all $t\ge 1$ and all $y\in \dom h$,
    \begin{align*}
        H(x^{t+1},\beta_t,y^t) 
        &\le H(x^t,\beta_{t-1},y) -\frac{1}{2\mu_t}\|x^{t+1} -x^t\|^2 + \frac{\beta_{t} -\beta_{t-1}}{2}\|c(x^t)-y^t\|^2. 
    \end{align*}
\end{lemma}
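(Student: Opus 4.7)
The plan is to unfold the definition of $H(x^{t+1},\beta_t,y^t)$ and successively apply the two ``engines'' already available to us: Condition~\ref{backtrack condition}(ii), which controls the drop in the $(f+g+\tfrac{\beta_t}{2}\|c(\cdot)-y^t\|^2)$-part from $x^t$ to $x^{t+1}$, and the prox-optimality of $y^t$ obtained in the previous iteration, which controls the $h$-part through a comparison with an arbitrary $y\in\dom h$. The only subtle point is that these two facts involve two different values of $\beta$: Condition~\ref{backtrack condition}(ii) uses $\beta_t$ (the current parameter), while $y^t\in\Prox_{(1/\beta_{t-1})h}(c(x^t))$ is tied to $\beta_{t-1}$. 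Bridging the two is precisely what produces the $\tfrac{\beta_t-\beta_{t-1}}{2}\|c(x^t)-y^t\|^2$ error term in the conclusion.

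Concretely, first I would apply Condition~\ref{backtrack condition}(ii) at iteration $t$, which (since we are looking at a successful iteration producing $x^{t+1}$ and $\mu_t$) yields
\[
f(x^{t+1})+g(x^{t+1})+\tfrac{\beta_t}{2}\|c(x^{t+1})-y^t\|^2 \le f(x^t)+g(x^t)+\tfrac{\beta_t}{2}\|c(x^t)-y^t\|^2 -\tfrac{1}{2\mu_t}\|x^{t+1}-x^t\|^2 .
\]
Adding $h(y^t)$ to both sides rewrites the left-hand side as $H(x^{t+1},\beta_t,y^t)$.

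Next I would handle the quadratic term on the right by the trivial identity
\[
\tfrac{\beta_t}{2}\|c(x^t)-y^t\|^2 = \tfrac{\beta_{t-1}}{2}\|c(x^t)-y^t\|^2 + \tfrac{\beta_t-\beta_{t-1}}{2}\|c(x^t)-y^t\|^2,
\]
which isolates the piece that matches the parameter $\beta_{t-1}$ under which $y^t$ was chosen. Since $t\ge 1$ and $y^t\in\Prox_{(1/\beta_{t-1})h}(c(x^t))$, the definition of the proximal mapping gives, for every $y\in\dom h$,
\[
\tfrac{\beta_{t-1}}{2}\|c(x^t)-y^t\|^2 + h(y^t) \le \tfrac{\beta_{t-1}}{2}\|c(x^t)-y\|^2 + h(y).
\]
Substituting this into the previous inequality transforms $f(x^t)+g(x^t)+\tfrac{\beta_{t-1}}{2}\|c(x^t)-y^t\|^2+h(y^t)$ into an upper bound by $H(x^t,\beta_{t-1},y)$, and the leftover term $\tfrac{\beta_t-\beta_{t-1}}{2}\|c(x^t)-y^t\|^2$ together with $-\tfrac{1}{2\mu_t}\|x^{t+1}-x^t\|^2$ produces exactly the claimed inequality.

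I do not foresee a real obstacle here: every ingredient is already in place, and the argument is a short chain of inequalities. The one thing worth being careful about is the bookkeeping of indices, in particular that $y^t$ (used in Condition~\ref{backtrack condition}(ii) for the update producing $x^{t+1}$) was generated at the end of the previous successful iteration with parameter $\beta_{t-1}$, which is why the conclusion naturally mixes $\beta_t$ on the left and $\beta_{t-1}$ on the right.
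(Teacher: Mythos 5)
Your proposal is correct and follows essentially the same argument as the paper's proof: apply Condition~\ref{backtrack condition}(ii) and add $h(y^t)$, split $\frac{\beta_t}{2}\|c(x^t)-y^t\|^2$ into the $\beta_{t-1}$ piece plus the $\frac{\beta_t-\beta_{t-1}}{2}$ remainder, and then invoke the prox-optimality of $y^t\in{\rm Prox}_{(1/\beta_{t-1})h}(c(x^t))$ against an arbitrary $y\in\dom h$. The index bookkeeping you flag (that $y^t$ is tied to $\beta_{t-1}$ via \eqref{get_uk_line}) is exactly the point the paper relies on as well.
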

\begin{proof}
    Notice that for all $t\ge 1$, 
    \begin{align}
        &f(x^{t+1})+ g(x^{t+1})+ \frac{\beta_{t}}{2}\| c(x^{t+1}) -y^t\|^2 + h(y^t) \nonumber \\
        &\le f(x^t)+ g(x^t)+ \frac{\beta_{t}}{2}\|c(x^t) -y^t\|^2+h(y^t) -\frac{1}{2\mu_t}\|x^{t+1} -x^t\|^2 \nonumber \\
        &= f(x^t)+ g(x^t)+ \frac{\beta_{t-1}}{2}\|c(x^t) -y^t\|^2 +h(y^t) -\frac{1}{2\mu_t}\|x^{t+1} -x^t\|^2 + \frac{\beta_{t} -\beta_{t-1}}{2}\|c(x^t)-y^t\|^2 \nonumber\\
        &\le f(x^t)+ g(x^t) + \frac{\beta_{t-1}}{2}\|c(x^t) -y\|^2+ h(y) -\frac{1}{2\mu_t}\|x^{t+1} -x^t\|^2 + \frac{\beta_{t} -\beta_{t-1}}{2}\|c(x^t) -y^t\|^2, \nonumber
    \end{align}
    where the first inequality holds because $(x^{t+1},\mu^t)$ satisfies Condition~\ref{backtrack condition}(ii) in place of $(\tilde{x},\mu)$, the second inequality holds because of \eqref{get_uk_line}. This completes the proof.
\end{proof}

The next lemma bounds the deviations of $\{c(x^t)\}$ from $\{y^{t-1}\}$ and $\{y^t\}$, respectively.
\begin{lemma} %\label{bounded_diff}
    Suppose that Assumption~\ref{assumption 1} holds. Let $\{x^t\}$ and $\{y^t\}$ be generated by Algorithm~\ref{alg1}. Then, the following statements hold.
    \begin{enumerate}[{\rm (i)}]
        \item For all $t \ge 1$,  it holds that 
            \begin{align}
               \!\!\! \frac{1}{2}\|c(x^t) - y^{t-1}\|^2 \!\le\! \frac{1}{\beta_0}( f(x^1)+ g(x^1) - \inf\{f + g\}) \!+\! \frac{1}{2}\|c(x^1) -y^0\|^2 \!+\! \frac{1}{\beta_0}h(y^0).     \label{bd_xt-yt-1}
            \end{align}
        \item For all $t \ge 1$,  it holds that 
            \begin{align}
                \frac{1}{2}\|c(x^t) - y^t\|^2 \le \frac{2}{\beta_0}( f(x^1)+ g(x^1) - \inf\{f + g\}) + \|c(x^1) -y^0\|^2 + \frac{2}{\beta_0}h(y^0).     \label{bd_xt-yt}
            \end{align}
    \end{enumerate}
\end{lemma}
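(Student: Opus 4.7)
The plan is to set up a multiplicative one-step recursion on the potential $R_t := H(x^t,\beta_{t-1},y^{t-1}) = f(x^t)+g(x^t)+\frac{\beta_{t-1}}{2}\|c(x^t)-y^{t-1}\|^2 + h(y^{t-1})$, telescope it, and then read off both parts from a single master inequality. I will work with the shifted potential $R'_t := R_t - \inf\{f+g\}\ge 0$ throughout.

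The starting point is Lemma~\ref{lemma_H} applied with $y = y^{t-1}\in\dom h$ (valid since $y^0\in\dom h$ by initialization and every $y^s$ with $s\ge 1$ lies in $\dom h$ by being a proximal mapping value). This gives $R_{t+1}\le R_t + \frac{\beta_t-\beta_{t-1}}{2}\|c(x^t)-y^t\|^2$ after discarding the nonpositive term $-\frac{1}{2\mu_t}\|x^{t+1}-x^t\|^2$. The key is that the perturbation $\frac{\beta_t-\beta_{t-1}}{2}\|c(x^t)-y^t\|^2$ is not harmless: a naive additive telescope would fail since $\sum_t(\beta_t-\beta_{t-1}) = \beta_{t-1}-\beta_0 \to \infty$. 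To handle it, I would invoke the proximal inequality defining $y^t$ at the test point $y^{t-1}$,
\[
\frac{\beta_{t-1}}{2}\|c(x^t)-y^t\|^2 + h(y^t) \le \frac{\beta_{t-1}}{2}\|c(x^t)-y^{t-1}\|^2 + h(y^{t-1}),
\]
which yields $\|c(x^t)-y^t\|^2 \le \|c(x^t)-y^{t-1}\|^2 + \frac{2}{\beta_{t-1}}(h(y^{t-1})-h(y^t))$. Introducing $\gamma_t := (\beta_t-\beta_{t-1})/\beta_{t-1}$ (so $1+\gamma_t = \beta_t/\beta_{t-1}$) and bounding $\frac{\beta_{t-1}}{2}\|c(x^t)-y^{t-1}\|^2 = R_t - (f(x^t)+g(x^t)) - h(y^{t-1}) \le R'_t - h(y^{t-1})$, the perturbation is at most $\gamma_t R'_t - \gamma_t h(y^t)$. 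The resulting clean recursion is $R'_{t+1} \le (1+\gamma_t) R'_t - \gamma_t h(y^t) \le (1+\gamma_t) R'_t$ by $h\ge 0$.

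Telescoping this multiplicative recursion yields $R'_t \le R'_1 \prod_{s=1}^{t-1} \frac{\beta_s}{\beta_{s-1}} = R'_1 \cdot \frac{\beta_{t-1}}{\beta_0}$ for every $t\ge 1$. Since $R'_t \ge \frac{\beta_{t-1}}{2}\|c(x^t)-y^{t-1}\|^2$ (using $f(x^t)+g(x^t)\ge \inf\{f+g\}$ and $h(y^{t-1})\ge 0$), dividing by $\beta_{t-1}$ and then substituting $R'_1 = f(x^1)+g(x^1)-\inf\{f+g\} + \frac{\beta_0}{2}\|c(x^1)-y^0\|^2 + h(y^0)$ gives $\frac{1}{2}\|c(x^t)-y^{t-1}\|^2 \le R'_1/\beta_0$, which is exactly the bound in (i) after dividing by $\beta_0$. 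For part (ii), the prox inequality above also gives $\frac{1}{2}\|c(x^t)-y^t\|^2 \le \frac{1}{2}\|c(x^t)-y^{t-1}\|^2 + h(y^{t-1})/\beta_{t-1}$; the first term is bounded by (i), and since $h(y^{t-1}) \le R'_t \le R'_1 \beta_{t-1}/\beta_0$, the second term is at most $R'_1/\beta_0$, so the total is $2R'_1/\beta_0$, which matches the RHS of (ii).

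The main obstacle I anticipate is conceptual rather than computational: the telescoping must be \emph{multiplicative}, not additive. The role of the condition $\beta_t/\beta_{t-1}\to 1$ from the algorithm is visible in this proof only through the product $\prod_s \beta_s/\beta_{s-1} = \beta_{t-1}/\beta_0$, and it is precisely this product growth that is cancelled by the factor $\beta_{t-1}$ appearing as the weight of $\|c(x^t)-y^{t-1}\|^2$ inside $R'_t$, leaving a $t$-uniform bound on the right-hand sides. Identifying this cancellation, and choosing the test point $y^{t-1}$ (rather than $y^0$ or $y^t$) in Lemma~\ref{lemma_H} and in the prox inequality so that the same quantity $\frac{\beta_{t-1}}{2}\|c(x^t)-y^{t-1}\|^2$ appears on both sides of the absorption step, is the only delicate point.
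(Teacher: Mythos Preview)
Your proof is correct and, after unwinding, coincides with the paper's: your multiplicative recursion $R'_{t+1}\le(\beta_t/\beta_{t-1})R'_t$ is equivalent to the paper's statement that the \emph{normalized} potential $\Theta(x^t,\beta_{t-1},y^{t-1}):=R'_t/\beta_{t-1}$ is nonincreasing, and both then read off (i) and (ii) the same way. The only cosmetic difference is that the paper divides by $\beta_t$ at the outset (and applies Lemma~\ref{lemma_H} with $y=y^t$, so that the perturbation $\tfrac{\beta_t-\beta_{t-1}}{2}\|c(x^t)-y^t\|^2$ merges with $\tfrac{\beta_{t-1}}{2}\|c(x^t)-y^t\|^2$ to give $\tfrac{\beta_t}{2}\|c(x^t)-y^t\|^2$ before dividing), which yields the monotone form directly and avoids the separate absorption step you carry out.
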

\begin{proof}
   Applying Lemma~\ref{lemma_H} with $y=y^t$, we see that for all $t\ge 1$,
   \[
   H(x^{t+1},\beta_t,y^t) 
        \le H(x^t,\beta_{t-1},y^t) -\frac{1}{2\mu_t}\|x^{t+1} -x^t\|^2 + \frac{\beta_{t} -\beta_{t-1}}{2}\|c(x^t)-y^t\|^2. 
   \]
   Subtracting $\inf\{f+g\}$ from both sides of the above inequality and then dividing both sides by $\beta_t$, we have, upon invoking the definition of $H$ that for all $t\ge 1$,
    \begin{align*}
        &\frac{1}{\beta_t} ( f(x^{t+1})+ g(x^{t+1})- \inf\{f + g\}) + \frac{1}{2}\|c(x^{t+1}) - y^t\|^2 + \frac{1}{\beta_t}h(y^t) \nonumber \\
        &\le \frac{1}{\beta_t} ( f(x^t)+ g(x^t)-\inf\{f + g\}) + \frac{1}{2}\| c(x^t) -y^{t}\|^2 +\frac{1}{\beta_t}h(y^{t}) - \frac{1}{ 2\mu_t\beta_t}\|x^{t+1} -x^t\|^2 \nonumber \\
        & \le \frac{1}{\beta_{t-1}} ( f(x^t)+ g(x^t)-\inf\{f + g\}) + \frac{1}{2}\| c(x^t) -y^{t}\|^2 +\frac{1}{\beta_{t-1}}h(y^{t}) - \frac{1}{ 2\mu_t\beta_t}\|x^{t+1} -x^t\|^2 \nonumber \\
        & \le \frac{1}{\beta_{t-1}} ( f(x^t)+ g(x^t)-\inf\{f + g\}) \!+\! \frac{1}{2}\| c(x^t) -y^{t-1}\|^2 \!+\!\frac{1}{\beta_{t-1}}h(y^{t-1}) \!-\! \frac{1}{ 2\mu_t\beta_t}\|x^{t+1} -x^t\|^2,
    \end{align*}
    where the second inequality holds because $h$ is nonnegative (see Assumption~\ref{assumption 1}(iii)), $f(x^t) + g(x^t) \ge \inf\{f + g\}$ for all $t\ge 0$ and $\{\beta_t\}$ is nondecreasing, and the last inequality holds because of \eqref{get_uk_line}.
    Define $\Theta (x, \beta, y) := \frac{1}{\beta} \left(f(x)+ g(x) -\inf\{f + g\} \right)+ \frac{1}{2}\|c(x) -y\|^2 + \frac{1}{\beta}h(y)$. Then we see from the above display that
$\Theta(x^{t+1}, \beta_t, y^t) \le \Theta(x^t, \beta_{t-1}, y^{t-1})$ for all $t\ge 1$.
    Therefore, we have 
    \begin{equation}\label{hehehaha}
        \Theta(x^{t+1}, \beta_{t}, y^t) \le \Theta(x^1, \beta_0, y^0) \ \ \forall t\ge 0.
    \end{equation}
    Combining this with the observation that $\frac{1}{2}\|c(x^{t+1}) -y^t\|^2 \le \Theta(x^{t+1}, \beta_{t}, y^t)$ for all $t\ge 0$ proves \eqref{bd_xt-yt-1}.
   
   Next, we have from the definition of $\Theta$ and \eqref{hehehaha} that
   \begin{align}
       &\frac{1}{\beta_t}h(y^t)  \le \Theta(x^{t+1}, \beta_t, y^t) \le \Theta(x^1, \beta_0, y^0)\ \ \ \ \forall t\ge 0.  \label{bd_hyt} 
   \end{align}
Then we deduce further that for all $t\ge 1$,
   \begin{align}
       \frac{1}{2}\|c(x^t) - y^t\|^2  
       &\overset{(a)}{\le} \frac{1}{2}\|c(x^t) -y^t\|^2 + \frac{1}{\beta_{t-1}}h(y^t)  \overset{(b)}{\le} \frac{1}{2}\|c(x^t) -y^{t-1}\|^2 + \frac{1}{\beta_{t-1}}h(y^{t-1}) \nonumber \\
       &\le  \frac{2}{\beta_0}\left( f(x^1)+ g(x^1)-  \inf\{f + g\}\right) + \|c(x^1) -y^0\|^2 + \frac{2}{\beta_0}h(y^0), \nonumber
   \end{align}
   where $(a)$ holds because $h$ is nonnegative, $(b)$ holds because of \eqref{get_uk_line}, and the last inequality follows from \eqref{bd_xt-yt-1} and \eqref{bd_hyt}. This proves \eqref{bd_xt-yt}.
\end{proof}
We are now ready to derive a bound on the total number of unsuccessful iterations prior to the $t$-th successful iteration of Algorithm~\ref{alg1}, for each $t\ge 0$.
\begin{proposition}[Number of unsuccessful iterations]
    Suppose that Assumption~\ref{assumption 1} holds. For each $t\ge 0$, let $U(t)$ be the total number of unsuccessful iterations prior to the $t$-th successful iteration of Algorithm~\ref{alg1}. 
    Then
    \[
        U(t) = t\log_{1/\rho} \eta+ \mathcal{O}\left( \ln \beta_t\right) .
    \]
    In addition, if we define  
    \begin{align}\label{M0}  
    \!\!\!\!M_0 \!:=\! \max\left\{\!\|c(x^0)\!-\!y^0\|,\!\sqrt{\frac{4}{\beta_0}( f(x^1)\!+\! g(x^1) \!-\! \inf\{f + g\}) \!+\! 2\|c(x^1) \!-\!y^0\|^2 \!+\! \frac{4}{\beta_0}h(y^0)}\right\}\!,\!\!\!\!
    \end{align}
    then 
    \begin{align}
       {\mu_t}^{-1} \le \rho^{-1}L+ \rho^{-1}\left( L_cM_0 +M_c^2\right)\beta_t. \label{bd_1/mu_t}
    \end{align}
\end{proposition}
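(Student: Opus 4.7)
The plan is to first establish the uniform-in-$t$ bound $\|c(x^t) - y^t\| \le M_0$ and then feed it into Proposition~\ref{bounded_mu}. For $t = 0$ this holds by the first entry in the definition \eqref{M0}; for $t \ge 1$, multiplying \eqref{bd_xt-yt} by $2$ and taking the square root gives exactly the second entry in \eqref{M0}. Substituting $\|c(x^t) - y^t\| \le M_0$ into the lower bound $\mu_t \ge \rho/(L + (L_c\|c(x^t) - y^t\| + M_c^2)\beta_t)$ of Proposition~\ref{bounded_mu} and rearranging then yields \eqref{bd_1/mu_t}.

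For $U(t)$, the backtracking rule in Algorithm~\ref{alg1} gives $\mu_s = \rho^{u_s}\min\{\mu_{\max}, \eta\mu_{s-1}\}$ for every $s \ge 1$ and $\mu_0 = \rho^{u_0}\mu_{-1}$, where $u_s$ denotes the number of unsuccessful iterations preceding the $s$-th successful one. Since $\eta \ge 1$, in both cases
\[
u_s \le \log_{1/\rho}(\eta\mu_{s-1}/\mu_s) = \log_{1/\rho}\eta + \log_{1/\rho}(\mu_{s-1}/\mu_s).
\]
Summing over $s = 0, 1, \ldots, t-1$ collapses the second term telescopically to $\log_{1/\rho}(\mu_{-1}/\mu_{t-1})$, so
\[
U(t) \le t\log_{1/\rho}\eta + \log_{1/\rho}(\mu_{-1}/\mu_{t-1}).
\]
Invoking \eqref{bd_1/mu_t} at index $t-1$ bounds $1/\mu_{t-1}$ by $\rho^{-1}(L + (L_c M_0 + M_c^2)\beta_{t-1})$, and since $\beta_{t-1} \le \beta_t$ and all other quantities are fixed constants, the remaining logarithm is $\mathcal{O}(\ln \beta_t)$, which finishes the proof.

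I do not anticipate any real obstacle: the substantive per-iteration bound in Proposition~\ref{bounded_mu} together with the uniform estimate on $\|c(x^t) - y^t\|$ coming from \eqref{bd_xt-yt} do all the heavy lifting. The only thing to verify is the algebraic identification between $M_0$ in \eqref{M0} and the right-hand side of \eqref{bd_xt-yt}, after which the count $U(t)$ is obtained by the one-line telescoping argument on the $\mu$-dynamics outlined above.
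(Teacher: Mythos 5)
Your proposal is correct and follows essentially the same route as the paper: both establish the uniform bound $\|c(x^t)-y^t\|\le M_0$ from \eqref{bd_xt-yt} and feed it into the lower bound of Proposition~\ref{bounded_mu} to get \eqref{bd_1/mu_t}, and both bound $U(t)$ by tracking the $\mu$-recursion (the paper via the product form $\mu_t\le\eta^t\rho^{U(t)}\mu_{-1}$, you via the equivalent per-step logarithmic inequality and telescoping). The only cosmetic difference is an off-by-one in which index range you sum over, which does not affect the stated $t\log_{1/\rho}\eta+\mathcal{O}(\ln\beta_t)$ bound.
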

\begin{proof} 
    Let $n_t$ be the number of unsuccessful iterations between the $(t-1)$-th and the $t$-th successful iteration. Note that $n_t$ is well-defined and finite thanks to Proposition~\ref{bounded_mu}. Now, in view of the definition of $\mu_{t}$, we see that
    \[
    \begin{cases}
    &\mu_0 = \rho^{n_0}\mu_{-1},\\
    &\mu_t = \rho^{n_t} \min\{\mu_{\max},\eta\mu_{t-1}\} \le \rho^{n_t}\eta \mu_{t-1} \le \eta^t \rho^{\sum_{k=1}^t n_k} \mu_0 = \eta^t \rho^{\sum_{k=0}^t n_k} \mu_{-1}\ \ \ \forall t\ge 1.
    \end{cases}        
    \]
    Consequently, we have $\mu_t \le \eta^t \rho^{\sum_{k=0}^t n_k} \mu_{-1} = \eta^t \rho^{U(t)} \mu_{-1}$ for all $t\ge 0$.
    On the other hand, by Proposition~\ref{bounded_mu}, \eqref{bd_xt-yt} and the definition of $M_0$ in \eqref{M0}, we see that
    \[
        \mu_t \ge \frac{\rho}{L+(L_cM_0+ M_c^2)\beta_t}\ \ \ \forall t\ge 0.
    \]
    Hence, we have $\eta^t \rho^{U(t)} \mu_{-1}\ge \frac{\rho}{L+(L_cM_0+ M_c^2)\beta_t}$, which gives
    \[
        U(t) \le 1+ \lceil \log_{1/ \rho} \left( L+ (L_cM_0+ M_c^2)\beta_t \right) + \log_{1/ \rho}\mu_{-1} + t \log_{1/ \rho} \eta \rceil.
    \]
    This completes the proof.
\end{proof}

Before ending this subsection, we present the optimality conditions for the subproblems that arise in Algorithm~\ref{alg1}, which will be used repeatedly in our analysis below. Suppose Assumption~\ref{assumption 1} holds and let $\{x^t\}$, $\{y^t\}$ and $\{\mu_t\}$ be generated by Algorithm~\ref{alg1} (these sequences are well defined in view of Proposition~\ref{bounded_mu}). Then we have from \eqref{get_xk_line}, \eqref{get_uk_line} and \cite[Exercise~8.8(c)]{rockafellar2009variational} that for all $t\ge 1$,
 \begin{align}\label{yh_optcon}
 \begin{cases}
        & 0 \in \nabla f(x^t) + \beta_tJ_c(x^t)^T(c(x^t) -y^t) + \frac{2}{\mu_t} (x^{t+1} -x^t) + \partial g(x^{t+1}),\\
        & 0 \in \beta_{t-1} (y^{t} - c(x^t))+ \partial h(y^t). 
\end{cases}
    \end{align}

\subsection{Subsequential convergence under a standard constraint qualification}

The following theorem analyzes convergence of a certain subsequence of the sequence generated by Algorithm~\ref{alg1} under the standard constraint qualification \eqref{CQ}. It imposes seemingly restrictive assumptions including {\em vanishing scaled successive changes} along some subsequence and the existence of an accumulation point satisfying $c(x^*) = y^*$: we will argue in Section~\ref{convergence_section} that these additional conditions can be satisfied by suitably choosing $\{\beta_t\}$ in Algorithm~\ref{alg1}, under additional structural assumptions on $g$ and $h$. As we will see in Section~\ref{convergence_section}, this theorem is useful for {\em translating} global complexity bounds into asymptotic convergence results along some subsequences.
\begin{theorem}\label{convergence}
    Suppose that Assumption~\ref{assumption 1} holds, and $\{x^t\}$, $\{y^t\}$ and $\{\mu_t\}$ are generated by Algorithm~\ref{alg1}. Suppose that $\{x^{T_k}\}$ is a subsequence of $\{x^t\}$ satisfying $\frac{1}{\mu_{T_k}}\|x^{T_k+1} -x^{T_k}\| \rightarrow 0$. Let $(x^*, y^*)$ be an accumulation point of $\{(x^{T_k}, y^{T_k})\}$ that satisfies $c(x^*) = y^*$. If the following condition holds
    \begin{equation}\label{CQ}
        \begin{array}{cc}
             & \zeta \in \partial^{\infty} g(x^*),~\eta\in \partial^{\infty} h(c(x^*)),~\zeta + J_c(x^*)^T\eta =0   \\ [3 pt]
             &  \Longrightarrow \zeta =0,~\eta=0,
        \end{array}
    \end{equation}
    then
        \begin{align*}
            0 \in \nabla f(x^*) + \partial g(x^*) + J_c(x^*)^T\partial h(c(x^*)).
        \end{align*}
\end{theorem}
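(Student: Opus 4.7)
My plan is to pass to the limit along the given subsequence in the subproblem optimality conditions \eqref{yh_optcon}, and to handle the subgradients of $h$ via a case split on their boundedness.

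First, after extracting a subsequence (still indexed by $T_k$), I may assume $(x^{T_k}, y^{T_k}) \to (x^*, y^*)$ with $c(x^*) = y^*$. The hypothesis $\frac{1}{\mu_{T_k}}\|x^{T_k+1} - x^{T_k}\| \to 0$, combined with the built-in bound $\mu_{T_k} \le \mu_{\max}$ in Algorithm~\ref{alg1}, yields $x^{T_k+1} - x^{T_k} \to 0$, hence $x^{T_k+1} \to x^*$. Since $x^{T_k+1} \in \dom g$ (a closed set) and $g$ is continuous on its domain, we have $x^{T_k+1} \overset{g}{\to} x^*$; analogously $y^{T_k} \overset{h}{\to} y^* = c(x^*)$. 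From the second line of \eqref{yh_optcon} at $t = T_k$ I define
\[
v^{T_k} := \beta_{T_k-1}\bigl(c(x^{T_k}) - y^{T_k}\bigr) \in \partial h(y^{T_k}),
\]
and rewrite the first line of \eqref{yh_optcon} as $w^{T_k} \in \partial g(x^{T_k+1})$ with
\[
w^{T_k} := -\nabla f(x^{T_k}) - \tfrac{\beta_{T_k}}{\beta_{T_k-1}} J_c(x^{T_k})^T v^{T_k} - \tfrac{2}{\mu_{T_k}}(x^{T_k+1} - x^{T_k}).
\]

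I then split into two cases on the behaviour of $\{v^{T_k}\}$. \emph{Case A: $\{v^{T_k}\}$ is bounded.} After passing to a further subsequence, $v^{T_k} \to v^*$, and the closure property of $\partial h$ recalled in Section~\ref{notation} gives $v^* \in \partial h(c(x^*))$. Using $\beta_{T_k}/\beta_{T_k-1} \to 1$, continuity of $\nabla f$ and $J_c$, and $\frac{1}{\mu_{T_k}}\|x^{T_k+1} - x^{T_k}\| \to 0$, we get $w^{T_k} \to -\nabla f(x^*) - J_c(x^*)^T v^*$; combined with $w^{T_k} \in \partial g(x^{T_k+1})$ and $x^{T_k+1} \overset{g}{\to} x^*$, the same closure property places the limit in $\partial g(x^*)$, which is exactly the conclusion of the theorem.

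The main obstacle is therefore \emph{Case B: $\{v^{T_k}\}$ is unbounded}, which I plan to rule out using \eqref{CQ}. Pass to a subsequence with $\|v^{T_k}\| \to \infty$ and set $\alpha_k := 1/\|v^{T_k}\| \downarrow 0$; a further extraction gives $\alpha_k v^{T_k} \to \tilde v$ with $\|\tilde v\| = 1$, and the horizon subdifferential inclusion recalled in Section~\ref{notation} yields $\tilde v \in \partial^\infty h(c(x^*))$. Scaling $w^{T_k}$ by $\alpha_k$, the terms not involving $v^{T_k}$ are killed by $\alpha_k$ (continuity of $\nabla f$ and $J_c$ together with the assumed vanishing of $\frac{1}{\mu_{T_k}}\|x^{T_k+1} - x^{T_k}\|$), so $\alpha_k w^{T_k} \to -J_c(x^*)^T \tilde v$, and applying the horizon subdifferential inclusion to $g$ gives $-J_c(x^*)^T \tilde v \in \partial^\infty g(x^*)$. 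Taking $\zeta := -J_c(x^*)^T \tilde v$ and $\eta := \tilde v$, the constraint qualification \eqref{CQ} forces $\eta = 0$, contradicting $\|\tilde v\| = 1$. Hence only Case A can occur, which finishes the proof.
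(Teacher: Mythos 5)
Your proof is correct, and it takes a noticeably more streamlined route than the paper's. Both arguments rest on the same ingredients --- the optimality conditions \eqref{yh_optcon}, the closure properties of the limiting and horizon subdifferentials of $g$ and $h$ on their closed domains, and the constraint qualification \eqref{CQ} to rule out blow-up --- but the paper normalizes by the combined quantity $r_{T_{k_i}} = \|\beta_{T_{k_i}}J_c(x^{T_{k_i}})^T(c(x^{T_{k_i}})-y^{T_{k_i}})\| + \|\psi_{T_{k_i}+1}\|$ and then needs three nested contradiction arguments: boundedness of $r_{T_{k_i}}$, which in turn requires boundedness of the normalized unmapped $h$-subgradient $\vartheta_{T_{k_i}}$, and afterwards a separate boundedness argument for $\iota_{T_{k_i}} = \|\beta_{T_{k_i}-1}(c(x^{T_{k_i}})-y^{T_{k_i}})\|$ to identify the limit as an element of $J_c(x^*)^T\partial h(c(x^*))$. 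The extra layers in the paper exist precisely to handle the scenario where $J_c(x^{T_k})^Tv^{T_k}$ stays bounded while $v^{T_k}$ itself does not (i.e., the direction of $c(x^{T_k})-y^{T_k}$ drifts into the kernel of $J_c(x^*)^T$). Your dichotomy on $\|v^{T_k}\|$ with $v^{T_k}=\beta_{T_k-1}(c(x^{T_k})-y^{T_k})$ sidesteps all of this in one stroke: in the unbounded case, normalizing by $\|v^{T_k}\|$ produces $\eta=\tilde v\in\partial^\infty h(c(x^*))$ of unit norm together with $\zeta=-J_c(x^*)^T\tilde v\in\partial^\infty g(x^*)$ satisfying $\zeta+J_c(x^*)^T\eta=0$, which \eqref{CQ} immediately forbids (note this covers the kernel scenario automatically, since $J_c(x^*)^T\tilde v=0$ just makes $\zeta=0$ while $\eta=\tilde v\neq 0$ still violates \eqref{CQ}); in the bounded case, the $g$-subgradient $w^{T_k}$ is an explicit function of $v^{T_k}$ and convergent quantities, so its convergence is automatic and no separate boundedness argument for it is needed. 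The only cosmetic omission is that you should note $T_k\ge 1$ (so that $\beta_{T_k-1}$ and the second line of \eqref{yh_optcon} are available), as the paper does; this is harmless.
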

\begin{proof}
    By assumption, there exists a subsequence $\{(x^{T_{k_i}}, y^{T_{k_i}})\}$ (we assume without loss of generality that $T_{k_i} \ge 1$) of $\{(x^{T_k}, y^{T_k})\}$ such that  $\lim_{i \rightarrow \infty} x^{T_{k_i}} = x^*$ and $\lim_{i\rightarrow \infty} y^{T_{k_i}} = y^*$. Moreover, since $\mu_k \le \mu_{\max} < \infty$ by construction, we also have 
    \begin{equation}\label{prefootnote}
    \|x^{T_{k_i}+1} - x^{T_{k_i}}\|\le \mu_{\max}\cdot\mu^{-1}_{T_{k_i}}\|x^{T_{k_i}+1} - x^{T_{k_i}}\| \to 0.
    \end{equation}
    
    Now,
    in view of \eqref{yh_optcon}, we see that there exists
    $\psi_{T_{k_i}+1} \in \partial g(x^{T_{k_i}+1})$ satisfying
    \begin{align}  \label{subpro_optimal}
        0 &=\nabla f(x^{T_{k_i}}) + \beta_{T_{k_i}}J_c(x^{T_{k_i}})^T(c(x^{T_{k_i}}) -y^{T_{k_i}})  +\psi_{T_{k_i}+1}  +\frac{2}{\mu_{T_{k_i}}} (x^{T_{k_i}+1} -x^{T_{k_i}});
    \end{align}
    and it holds that
    \begin{align}\label{subdiff_h_u}
        \beta_{T_{k_i}-1} (c(x^{T_{k_i}}) - y^{T_{k_i}}) \in \partial h(y^{T_{k_i}}).
    \end{align}
    
    Define
    \begin{align}
        r_{T_{k_i}} := \|\beta_{T_{k_i}}J_c(x^{T_{k_i}})^T(c(x^{T_{k_i}}) -y^{T_{k_i}})\| + \| \psi_{T_{k_i}+1} \|. \nonumber
    \end{align}    
    We claim that $\{r_{T_{k_i}} \}$ is bounded. 
    
    Suppose to the contrary that $\{r_{T_{k_i}}\}$ is unbounded. By passing to a further subsequence if necessary, we can assume without loss of generality that $\lim_{i \rightarrow \infty} r_{T_{k_i}} =\infty$ and $\inf_{i\ge 0}r_{T_{k_i }} >0$. Hence, the sequences $\{{\beta_{T_{k_i}}J_c(x^{T_{k_i}})^T(c(x^{T_{k_i}}) -y^{T_{k_i}})}/{r_{T_{k_i}}}\}$ and $\{{\psi_{T_{k_i}+ 1} }/{r_{T_{k_i}}}\}$ are bounded. Passing to a further subsequence if necessary, we have
    \begin{align} \label{limit}
        \lim_{i \rightarrow\infty} \frac{\beta_{T_{k_i}}J_c(x^{T_{k_i}})^T(c(x^{T_{k_i}}) -y^{T_{k_i}})}{r_{T_{k_i}}} = \xi^*\ \ {\rm and}\ \ \lim_{i\rightarrow\infty} \frac{\psi_{T_{k_i}+1}}{r_{T_{k_i}}} = \zeta^*,
    \end{align}
    for some $\xi^*$ and $\zeta^*$.
    In addition, by the definitions of $\{r_{T_{k_i}}\}$, $\xi^*$ and $\zeta^*$, we obtain that
    \begin{align}\label{xi+zeta_norm}
        1= \|\xi^*\| + \|\zeta^*\|.
    \end{align}
    Additionally, combining  \eqref{subpro_optimal} and \eqref{limit} with the facts that $\mu^{-1}_{T_{k_i}}\|x^{T_{k_i}+1} - x^{T_{k_i}}\| \rightarrow 0$ and $r_{T_{k_i}}\to \infty$, we have
    \begin{align} \label{xi+zeta}
        0 = \xi^* +\zeta^*.
    \end{align}
    Now, using the definition of $\zeta^*$, the facts that $r_{T_{k_i}}\to \infty$ and $\|x^{T_{k_i}+1} - x^{T_{k_i}}\| \rightarrow 0$ (see \eqref{prefootnote}), and the continuity of $g$ on its closed domain, we deduce that
    \begin{align} \label{zeta_horizon_sub}
        \zeta^* \in \partial^{\infty} g(x^*).
    \end{align}
    Next, we will prove that $\xi^* \in J_c(x^*)^T\partial^{\infty} h(c(x^*))$.
    To this end, we first claim that the sequence defined as
    \[
        \vartheta_{T_{k_i}} :=  \left\| \frac{\beta_{T_{k_i}}(c(x^{T_{k_i}}) - y^{T_{k_i}})}{r_{T_{k_i}}} \right\|
    \]
    is bounded.
    Suppose to the contrary that it is unbounded. By passing to a further subsequence if necessary, we may assume that $\vartheta_{T_{k_i}}\to \infty$, $\inf_i \vartheta_{T_{k_i}} > 0$ and there exists $\eta^*$ such that
    \begin{align}\label{lim_theta_tk}
        \lim_{i \rightarrow \infty} \frac{1}{\vartheta_{T_{k_i}}} \frac{\beta_{T_{k_i}}(c(x^{T_{k_i}}) -y^{T_{k_i}})}{r_{T_{k_i}}} = \eta^*.
    \end{align}
    Then due to the definition of $\eta^*$ and \eqref{limit}, we obtain that
    \begin{align} \label{accumu_point_eta}
        \| \eta^* \| = 1~\text{and} ~J_c(x^*)^T \eta^* = 0.
    \end{align}
    In addition, from \eqref{lim_theta_tk}, we can also deduce that
    \begin{align*}
        \eta^* &= \lim_{i \rightarrow \infty} \frac{1}{\vartheta_{T_{k_i}}} \frac{\beta_{T_{k_i}}(c(x^{T_{k_i}})- y^{T_{k_i}})}{r_{T_{k_i}}} 
        = \lim_{i \rightarrow \infty} \frac{1}{\vartheta_{T_{k_i}}r_{T_{k_i}}}\frac{\beta_{T_{k_i}}}{\beta_{T_{k_i}-1}} \beta_{T_{k_i}-1}\left(c(x^{T_{k_i}}) - y^{T_{k_i}}\right) \nonumber \\
        &\overset{(a)}\in \left\{ \lim_{i \rightarrow \infty} \frac{1}{\vartheta_{T_{k_i}}r_{T_{k_i}}} \frac{\beta_{T_{k_i}}}{\beta_{T_{k_i}-1}} w^{i}: w^{i} \in \partial h(y^{T_{k_i}}) \mbox{ for each }i\right\} \subseteq \partial^{\infty} h(c(x^*)),
    \end{align*}
    where $(a)$ follows from \eqref{subdiff_h_u}, and the last inclusion holds because of the definition of horizon subdifferential, the continuity of $h$ on its closed domain, and the facts that $\lim_{i \rightarrow \infty} y^{T_{k_i}}= y^* = c(x^*)$, $\frac{\beta_{T_{k_i}}}{\beta_{T_{k_i}-1}}\to 1$ and $\vartheta_{T_{k_i}}r_{T_{k_i}}\to \infty$.
    The above display together with \eqref{accumu_point_eta} contradicts \eqref{CQ}. Therefore, $\{{\beta_{T_{k_i}}(c(x^{T_{k_i}}) - y^{T_{k_i}})}/{r_{T_{k_i}}}\}$ is bounded. Now we can invoke \eqref{limit} to deduce that
    \begin{align*}
        &\xi^* \!=\! \lim_{i \rightarrow \infty}\! \frac{\beta_{T_{k_i}}J_c(x^{T_{k_i}})^T(c(x^{T_{k_i}}) - y^{T_{k_i}})}{r_{T_{k_i}}} \!=\! \lim_{i \rightarrow \infty}\! \frac{1}{r_{T_{k_i}}}\frac{\beta_{T_{k_i}}}{\beta_{T_{k_i}-1}} \beta_{T_{k_i}\!-1}J_c(x^{T_{k_i}})^T\!(c(x^{T_{k_i}}) - y^{T_{k_i}})\nonumber\\
        &\overset{(a)}{\in} J_c(x^*)^T\left\{ \lim_{i \rightarrow \infty} \frac{1}{r_{T_{k_i}}}\frac{\beta_{T_{k_i}}}{\beta_{T_{k_i}-1}}  w^{i}: w^{i} \in \partial h(y^{T_{k_i}}) \mbox{ for each }i \right\} \subseteq J_c(x^*)^T\partial^{\infty} h(c(x^*)),
    \end{align*}
    where $(a)$ follows from \eqref{subdiff_h_u} and the boundedness of $\{{\beta_{T_{k_i}}(c(x^{T_{k_i}}) - y^{T_{k_i}})}/{r_{T_{k_i}}}\}$, and the last inclusion follows from the definition of horizon subdifferential, the continuity of $h$ on its closed domain, and the facts that $\lim_{i \rightarrow \infty} y^{T_{k_i}}= y^* = c(x^*)$, $\frac{\beta_{T_{k_i}}}{\beta_{T_{k_i}-1}}\to 1$ and $r_{T_{k_i}}\to \infty$.
    The above display together with \eqref{xi+zeta_norm}, \eqref{xi+zeta} and \eqref{zeta_horizon_sub} contradicts \eqref{CQ}. Thus,  $\{r_{T_{k_i}}\}$ is bounded.

    Since $\{r_{T_{k_i}}\}$ is bounded, by passing to a further subsequence if necessary, we assume without loss of generality that
\begin{align} \label{limit22}
        \lim_{i \rightarrow\infty} \beta_{T_{k_i}}J_c(x^{T_{k_i}})^T(c(x^{T_{k_i}}) -y^{T_{k_i}}) = \tilde{\xi}^*\ \ {\rm and}\ \ \lim_{i\rightarrow\infty} \psi_{T_{k_i}+1} = \tilde{\zeta}^*,
    \end{align}
    for some $\tilde{\xi}^*$ and $\tilde{\zeta}^*$. Since $\frac{\beta_{T_{k_i}}}{\beta_{T_{k_i}-1}}\to 1$, we see from the first equality in \eqref{limit22} that
   \begin{align} \label{limit23}
        \lim_{i \rightarrow\infty} \beta_{T_{k_i}-1}J_c(x^{T_{k_i}})^T(c(x^{T_{k_i}}) -y^{T_{k_i}}) = \tilde{\xi}^*.
    \end{align}   
    
    We claim that  
    \begin{align}\label{tilde_xi}
        \tilde{\xi}^* \in J_c(x^*)^T\partial h(c(x^*))\ \ {\rm and }\ \ \tilde{\zeta}^* \in \partial g(x^*).
    \end{align}
    Note that the second inclusion in \eqref{tilde_xi} holds because of the fact that $\psi_{T_{k_i}+1} \in \partial g(x^{T_{k_i}+1})$, the second equality in \eqref{limit22}, the continuity of $g$ on its closed domain and  $\|x^{T_{k_i}+1} - x^{T_{k_i}}\| \rightarrow 0$  (see \eqref{prefootnote}).
    We now prove the first inclusion in \eqref{tilde_xi}. Define
    \[
    \iota_{T_{k_i}}:= \|\beta_{T_{k_i}-1}(c(x^{T_{k_i}}) -y^{T_{k_i}})\|.
    \]    
    We claim that $\{\iota_{T_{k_i}}\}$ is bounded. Suppose to the contrary that $\{\iota_{T_{k_i}}\}$ is unbounded. By passing to a further subsequence if necessary, we may assume that $\iota_{T_{k_i}}\to \infty$, $\inf_i \iota_{T_{k_i}} > 0$ and there exists $\tilde\eta^*$ such that
    \begin{align}\label{lim_iota_tk}
        \lim_{i \rightarrow \infty} \frac{1}{\iota_{T_{k_i}}} \beta_{T_{k_i}-1}(c(x^{T_{k_i}}) -y^{T_{k_i}}) = \tilde\eta^*.
    \end{align}
    In view of the definition of $\tilde\eta^*$ and \eqref{limit23}, we obtain that
    \begin{align} \label{accumu_point_iota}
        \| \tilde\eta^* \| = 1~\text{and} ~J_c(x^*)^T \tilde\eta^* = 0.
    \end{align}
    Moreover, from \eqref{lim_iota_tk}, we can also deduce that
    \begin{align*}
        \tilde\eta^* &= \lim_{i \rightarrow \infty} \frac{1}{\iota_{T_{k_i}}} \beta_{T_{k_i}-1}(c(x^{T_{k_i}}) -y^{T_{k_i}})
       \\
       &\overset{(a)}\in \left\{ \lim_{i \rightarrow \infty} \iota^{-1}_{T_{k_i}} w^i: w^{i} \in \partial h(y^{T_{k_i}}) \mbox{ for each }i\right\} \subseteq \partial^{\infty} h(c(x^*)),
    \end{align*}
    where $(a)$ follows from \eqref{subdiff_h_u}, and the last inclusion follows from the definition of horizon subdifferential, the continuity of $h$ on its closed domain, and the facts that $\lim_{i \rightarrow \infty} y^{T_{k_i}}= y^* = c(x^*)$ and $\iota_{T_{k_i}}\to \infty$. The above display together with \eqref{accumu_point_iota} contradicts \eqref{CQ}, showing that $\{\iota_{T_{k_i}}\}$ is bounded. 
 Now we can invoke \eqref{limit23} to deduce that
    \begin{align*}
        \tilde{\xi}^* &= \lim_{i \rightarrow \infty} \beta_{T_{k_i}-1}J_c(x^{T_{k_i}})^T(c(x^{T_{k_i}}) -y^{T_{k_i}}) \nonumber\\
        &\overset{(a)}{\in} J_c(x^*)^T\left\{ \lim_{i \rightarrow \infty} w^{i}: w^{i} \in \partial h(y^{T_{k_i}}) \mbox{ for each }i \right\} \subseteq J_c(x^*)^T\partial h(c(x^*)),
    \end{align*}
    where $(a)$ follows from \eqref{subdiff_h_u} and the boundedness of $\{\iota_{T_{k_i}}\}$, and the last inclusion follows from the definition of horizon subdifferential, the continuity of $h$ on its closed domain, and the fact that $\lim_{i \rightarrow \infty} y^{T_{k_i}}= y^* = c(x^*)$. This establishes the first inclusion in \eqref{tilde_xi}.
    
    Finally, we pass to the limit on both sides of \eqref{subpro_optimal} to obtain
    \begin{align*}
        0 &= \nabla f(x^*) + \lim_{i \rightarrow \infty} \Big[ \beta_{T_{k_i}}J_c(x^{T_{k_i}})^T(c(x^{T_{k_i}}) -y^{T_{k_i}})  +\psi_{T_{k_i}+1}  +\frac{2}{\mu_{T_{k_i}}} (x^{T_{k_i}+1} -x^{T_{k_i}}) \Big] \\
        &\overset{(a)}{=}\nabla f(x^*) + \lim_{i \rightarrow \infty} \left[\beta_{T_{k_i}}J_c(x^{T_{k_i}})^T(c(x^{T_{k_i}}) -y^{T_{k_i}}) +\psi_{T_{k_i}+1}\right] \\
        &\overset{(b)}{\in} \nabla f(x^*) + \partial g(x^*) + J_c(x^*)^T\partial h(c(x^*)),
    \end{align*}
    where $(a)$ holds as $\mu^{-1}_{T_{k_i}}\|x^{T_{k_i}+1} - x^{T_{k_i}}\| \rightarrow 0$, and $(b)$ follows from \eqref{limit22} and \eqref{tilde_xi}.
\end{proof}

\section{Global complexity and subsequential convergence} \label{convergence_section}

In this section, we analyze the iteration complexity and the global convergence properties of Algorithm~\ref{alg1} under additional assumptions on $g$, $h$ and $\{\beta_t\}$, on top of the basic Assumption~\ref{assumption 1}.

We start by introducing the following assumption concerning $\{\beta_t\}$.\footnote{Notice that a $\{\beta_t\}$ satisfying Assumption~\ref{assumption_beta} clearly also satisfies the implicit assumptions required by Algorithm~\ref{alg1} for $\{\beta_t\}$, i.e., $\beta_t$ is positive nondecreasing with $\beta_t\to\infty$ and $\beta_t/\beta_{t-1}\to 1$.}
\begin{assumption}\label{assumption_beta}
    There exist $\delta\in (0,1)$ and positive constants $\alpha_0$, $\gamma_0$ and $\eta_0$ with $0<\alpha_0 \leq \gamma_0$ such that $\{\beta_t\}$ satisfies:
    \begin{enumerate}[{\rm (i)}]
        \item $\alpha_0(t+1)^{\delta} \leq \beta_t \leq \gamma_0(t+1)^{\delta}$ for all $t \geq 0$,
        \item $0\le\beta_t - \beta_{t-1} \leq \eta_0 t^{\delta-1}$ for all $t \geq 1$.
    \end{enumerate}
\end{assumption}

The next proposition describes a specific sequence satisfying Assumption~\ref{assumption_beta}.
\begin{proposition}
    Let $\beta_0 >0$, $\delta \in (0,1)$ and fix an integer $K\ge 1$. Define, for $t \ge 1$,
        \[
            \beta_t := \begin{cases}
                \beta_0 (t+1)^{\delta} & \text{if } \operatorname{mod}(t, K) = 0, \\
                \beta_0 (nK + 1)^{\delta} & \text{if } nK < t < (n+1)K \text{ for some nonnegative integer } $n$.
            \end{cases}
        \]  
    Then the sequence $\{\beta_t\}$ satisfies Assumption~\ref{assumption_beta}.
\end{proposition}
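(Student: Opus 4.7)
The plan is to verify conditions (i) and (ii) of Assumption~\ref{assumption_beta} directly from the piecewise definition of $\{\beta_t\}$. The key observation I will exploit is that $\{\beta_t\}$ is a step function: it stays constant, equal to $\beta_0(nK+1)^\delta$, on each block $\{nK,nK+1,\ldots,(n+1)K-1\}$, and jumps only at the indices $t=(n+1)K$ for $n\ge 0$. This immediately yields monotonicity ($\beta_t\ge\beta_{t-1}$) and reduces the verification of (ii) to bounding the finitely many jump instances.

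For (i), I would fix $n\ge 0$ and an index $t$ in the block $nK\le t<(n+1)K$, and use the sandwich $nK+1\le t+1\le (n+1)K$ to compare $\beta_t=\beta_0(nK+1)^\delta$ with $(t+1)^\delta$. The upper bound $\beta_t\le\beta_0(t+1)^\delta$ is immediate, so $\gamma_0:=\beta_0$ works. The lower bound reduces to controlling the ratio $(nK+1)/((n+1)K)$; an elementary check (writing it as $\frac{n}{n+1}+\frac{1}{(n+1)K}$) shows this ratio is nondecreasing in $n$ and thus attains its minimum $1/K$ at $n=0$, yielding $\alpha_0:=\beta_0 K^{-\delta}$.

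For (ii), the only nontrivial case is $t=(n+1)K$, $n\ge 0$. I would bound the jump
\[
\beta_t-\beta_{t-1}=\beta_0\bigl[((n+1)K+1)^\delta-(nK+1)^\delta\bigr]
\]
by applying concavity of $x\mapsto x^\delta$ (equivalently, the mean value theorem together with the fact that $\delta x^{\delta-1}$ is decreasing on $(0,\infty)$), obtaining an upper bound of $\beta_0\delta K(nK+1)^{\delta-1}$. Then, reusing the same ratio estimate from (i) and noting $\delta-1<0$, I would convert $(nK+1)^{\delta-1}$ to a multiple of $t^{\delta-1}=((n+1)K)^{\delta-1}$, arriving at $\beta_t-\beta_{t-1}\le\beta_0\delta K^{2-\delta}\,t^{\delta-1}$, so $\eta_0:=\beta_0\delta K^{2-\delta}$ suffices.

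I do not anticipate a substantive obstacle: the argument is an elementary calculation based on monotonicity/concavity of the power function $x^\delta$. The only point requiring care is tracking the boundary case $n=0$, which is precisely where the ratio $(nK+1)/((n+1)K)$ is smallest and where the $K$-dependent factors in $\alpha_0$ and $\eta_0$ become tight; once this worst case is handled, the bounds propagate uniformly to all $n\ge 1$ since the ratios improve monotonically toward $1$.
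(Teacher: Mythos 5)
Your proposal is correct and follows essentially the same route as the paper's proof: the upper bound $\gamma_0=\beta_0$ from $nK+1\le t+1$, the lower bound $\alpha_0=\beta_0K^{-\delta}$ via the monotonicity of the ratio $(nK+1)/((n+1)K)$ in $n$, and the jump bound $\eta_0=\beta_0\delta K^{2-\delta}$ via concavity of $x\mapsto x^\delta$ followed by the same ratio conversion. No gaps; the constants match the paper's exactly.
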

\begin{proof}
    Observe that for any $t \geq 0$, if $t = nK$ for some integer $n$, then $\beta_t = \beta_0 (nK + 1)^\delta = \beta_0 (t + 1)^\delta$. If $nK < t < (n+1)K$, then $\beta_t = \beta_0 (nK + 1)^\delta$. Since $nK + 1 \leq t + 1$, in this case, we deduce that $\beta_t \leq \beta_0 (t + 1)^\delta$. Therefore, the relation $\beta_t \leq \beta_0 (t+1)^\delta$ holds for all $t$.    
    Now we derive a lower bound for $\beta_t$ when $nK < t< (n+1)K$. In this case,
    \begin{align*}
        \beta_t &= \beta_0 (nK + 1)^\delta \ge \beta_0 \left( \frac{nK + 1}{nK + K} \right)^\delta (t + 1)^\delta \ge \beta_0 \left( \frac{1}{K} \right)^\delta (t + 1)^\delta,
    \end{align*}
    where the first inequality holds because $t+1 \le nK+K $, the last inequality holds because $K\ge 1$ and the function $x \mapsto \left(\frac{x+1}{x+K} \right)^{\delta}$ is nondecreasing for $x\ge 0$.
    Thus, upon letting $\alpha_0 = \beta_0 K^{-\delta}$ and $\gamma_0 = \beta_0$, we see that $\alpha_0 \leq \gamma_0$ and Assumption~\ref{assumption_beta}(i) is satisfied.

    Next, notice that if $nK \le t-1<t< (n+1)K$ for some nonnegative integer $n$, then we have $\beta_t - \beta_{t-1} = 0$. Now, consider the case when $nK \le t-1< (n+1)K= t$ for some nonnegative integer $n$. In this case,
    \begin{align*}
        &\beta_t -\beta_{t-1} = \beta_0 \left( ((n+1)K + 1)^\delta - (nK + 1)^\delta \right)\le \beta_0 \delta (nK + 1)^{\delta-1} K \nonumber \\
        &= \beta_0\delta K\left( \frac{nK+1}{nK+K} \right)^{\delta-1}\left( nK+K \right)^{\delta-1} \le \beta_0\delta K\cdot K^{1-\delta}\left( nK+K \right)^{\delta-1} = \beta_0\delta  K^{2-\delta}t^{\delta-1},
    \end{align*}
    where the first inequality holds because $x\mapsto x^{\delta}$ is concave, and the last inequality holds because $K\ge 1$ and $x \mapsto \left(\frac{x+K}{x+1} \right)^{1-\delta}$ is nonincreasing for $x \ge 0$.
    Therefore, we see that Assumption~\ref{assumption_beta}(ii) holds with $\eta_0 = \beta_0 \delta K^{2-\delta}$. 
\end{proof}

We next discuss the iteration complexity and global convergence properties of Algorithm~\ref{alg1} in the next three subsections, where we progressively relax our assumptions on $h$.

\subsection{Convergence analysis when $h$ is Lipschitz continuous}\label{sec41}

In this subsection, we consider the following Lipschitz continuity assumption on $h$, which has been widely studied in the literature; see Table~\ref{table:smoothing_methods}.
\begin{assumption} \label{asm_lip_h}
    There exists a constant $M_h >0$ such that $h$ in \eqref{problem} satisfies
    \[
        |h(v) - h(y)| \le M_h\|v -y\|\ \ \ \ \ \forall v,y\in \mathbb{R}^m.
    \]
\end{assumption}
This assumption, together with Assumption~\ref{assumption 1}, is naturally satisfied in a number of practical applications. As a concrete example, consider the following model for multiple-input-multiple-output (MIMO) signal detection with $p$-ary phase-shift
keying (PSK) (see, e.g., \cite{kume2025proximal}):
\[
   \min\limits_{(r, \theta)\in [\underline{r}, 1]^n \times \mathbb{R}^n} \frac{1}{2}\| \hat{y}- A \varphi(r, \theta)\|^2 + \lambda_1 \sum_{i=1}^n \frac{1}{r_i} + \lambda_2\left\| {\bf sin}\left(\frac{p\theta}{2}\right) \right\|_1,
\]
where $\lambda_1>0$, $\lambda_2>0$, $\underline{r}\in(0,1]$, $\hat{y} \in \mathbb{R}^{2m}$, $A \in \mathbb{R}^{2m \times 2n}$, and $\varphi: \mathbb{R}^n \times \mathbb{R}^n \rightarrow \mathbb{R}^{2n}$ is defined as 
\[
    \varphi(r, \theta) := \left[ \begin{array}{c}
         r\odot {\bf cos}(\theta)  \\
         r\odot {\bf sin}(\theta) 
    \end{array} \right],
\]
where $\odot$ denotes the entry-wise product (Hadamard product), ${\bf sin}(\theta) := \begin{bmatrix} 
\sin \theta_1& \sin \theta_2& \cdots &\sin \theta_n
\end{bmatrix}^T$, 
${\bf cos}(\theta) := \begin{bmatrix}
\cos \theta_1& \cos \theta_2& \cdots& \cos \theta_n
\end{bmatrix}^T$.
This model is an instance of \eqref{problem} satisfying Assumptions~\ref{assumption 1} and \ref{asm_lip_h}. Specifically, one can take $f(r, \theta) := \frac{1}{2}\|\hat{y}- A \varphi(r, \theta)\|^2+ \lambda_1 \sum_{i=1}^n \gamma(r_i) $, $g(r,\theta) := \delta_{[\underline{r}, 1]^n \times \mathbb{R}^n}(r, \theta)$, $h(z) := \lambda_2\|z\|_1$, and $c(r, \theta) := {\bf sin}(p\theta/2)$, where $\gamma: \mathbb{R}\to \mathbb{R}$ is defined as $\gamma(t) := 1/t$ if $t\ge \underline{r}$ and $\gamma(t) := -\underline{r}^{-2}(t- \underline{r})+ 1/\underline{r}$ otherwise.

Here, we aim at studying the global complexity of Algorithm~\ref{alg1} for finding an $(\epsilon_1,\epsilon_2,0)$-stationary point (see Definition~\ref{def:ed-stationary}), and establishing subsequential convergence along a {\em constructible} subsequence to a stationary point. We start with the following proposition.
\begin{proposition} \label{complexity_lip}
   Suppose that Assumptions~\ref{assumption 1} and \ref{asm_lip_h} hold, and $\{\beta_t\}$ is chosen to satisfy Assumption~\ref{assumption_beta} with $\delta\in(0,1)$. Let $\{x^t\}$, $\{y^t\}$ and $\{\mu_t\}$ be generated by Algorithm~\ref{alg1}. Then, the following inequalities hold for all $T \ge 1$,
        \begin{align}
            &\frac{1}{T} \sum_{t=1}^T\frac{1}{\mu_t^2}\|x^{t+1}-x^t\|^2 \le \frac{4\rho^{-1}LK_0}{T+1}+ \frac{4\rho^{-1}(L_cM_0+ M_c^2)K_0\gamma_0}{(T+1)^{1-\delta}}, \label{conclusion_lip_1} \\
            &\frac{1}{T} \sum_{t=1}^T \frac{1}{\mu_t}\| x^{t+1} -x^t\|^2 \le \frac{2K_0}{T}, \quad \frac{1}{T} \sum_{t=1}^T \|x^{t+1} -x^t\|^2 \le \frac{2\mu_{\rm max}K_0}{T}, \label{conclusion_lip_2}
        \end{align}
        and
        \begin{align}
            \frac{1}{T}\sum_{t=1}^T\|c(x^{t+1}) -y^t\| \le \frac{2M_h}{\alpha_0(1-\delta)}\frac{1}{(T+1)^{\delta}}+ \sqrt{\frac{8K_0}{\alpha_0(1-\delta)}\frac{1}{(T+1)^{1+\delta}}}, \label{con_lip_3}
        \end{align}
        where $L$, $L_c$, $M_c$ are defined in Assumption~\ref{assumption 1}, $\alpha_0$, $\gamma_0$, $\delta$ are defined in Assumption~\ref{assumption_beta}, $M_h$ is defined in Assumption~\ref{asm_lip_h}, $\rho$ and $\mu_{\max}$ are specified in Algorithm~\ref{alg1},  $M_0$ is defined as in \eqref{M0} and 
        \begin{equation}\label{def_K0}
        K_0 := f(x^1)+g(x^1)+ \frac{\beta_0}{2}\|c(x^1) -y^0\|^2+h(y^0)+\frac{\gamma_0(1+\delta)M_h^2} {2\alpha_0\beta_0} - \inf\{f + g\} > 0.
        \end{equation}
        % and $\lambda_1 := L+ \frac{2^{\delta}\gamma_0}{\alpha_0}M_hL_c$.
\end{proposition}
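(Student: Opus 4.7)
My plan is to derive all three displayed inequalities from a single master bound, namely $\sum_{t=1}^T \frac{1}{\mu_t}\|x^{t+1}-x^t\|^2 \le 2K_0$. To obtain this, I would sum Lemma~\ref{lemma_H} with the \emph{specific} choice $y = y^{t-1}$ (which lies in $\dom h$ since $y^{t-1} \in \mathrm{Prox}_{(1/\beta_{t-2})h}(c(x^{t-1}))$, or equals the initialization when $t=1$). Because $H(x^t,\beta_{t-1},y^{t-1})$ on the right matches the $H$-value at the previous index, the inequality telescopes cleanly, giving
\[
H(x^{T+1},\beta_T,y^T) \le H(x^1,\beta_0,y^0) - \sum_{t=1}^T \frac{1}{2\mu_t}\|x^{t+1}-x^t\|^2 + \sum_{t=1}^T \frac{\beta_t-\beta_{t-1}}{2}\|c(x^t)-y^t\|^2.
\]
Using $h\ge 0$ and $f(x^{T+1})+g(x^{T+1})\ge \inf\{f+g\}$ in $H(x^{T+1},\beta_T,y^T)$ then moves the leading term to a global lower bound, so the master bound follows once the residual error term $\sum_{t=1}^T(\beta_t-\beta_{t-1})\|c(x^t)-y^t\|^2/2$ is bounded by $\frac{\gamma_0(1+\delta)M_h^2}{2\alpha_0 \beta_0}$.

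The central technical step, and the main obstacle, is exactly this residual bound. I would exploit the second relation in \eqref{yh_optcon}, $\beta_{t-1}(c(x^t)-y^t)\in\partial h(y^t)$, together with Assumption~\ref{asm_lip_h} (which forces $\|\partial h(\cdot)\|\le M_h$) to get $\|c(x^t)-y^t\|\le M_h/\beta_{t-1}$. Thus
\[
\sum_{t=1}^T \frac{\beta_t-\beta_{t-1}}{2}\|c(x^t)-y^t\|^2 \le \frac{M_h^2}{2}\sum_{t=1}^T \frac{\beta_t-\beta_{t-1}}{\beta_{t-1}^2}.
\]
The remaining sum is then controlled by the telescoping identity $\frac{\beta_t-\beta_{t-1}}{\beta_{t-1}^2} = \frac{\beta_t}{\beta_{t-1}}\bigl(\tfrac{1}{\beta_{t-1}} - \tfrac{1}{\beta_t}\bigr)$, combined with the uniform ratio bound $\beta_t/\beta_{t-1} \le \gamma_0(1+\delta)/\alpha_0$ that follows from Assumption~\ref{assumption_beta}(i) and the elementary inequality $(1+1/t)^\delta \le 1+\delta/t\le 1+\delta$. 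Telescoping then yields $\sum_{t=1}^T (\beta_t-\beta_{t-1})/\beta_{t-1}^2 \le \gamma_0(1+\delta)/(\alpha_0\beta_0)$, which exactly matches the extra term absorbed into $K_0$.

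Having established $\sum_{t=1}^T\frac{1}{\mu_t}\|x^{t+1}-x^t\|^2 \le 2K_0$, the bound \eqref{conclusion_lip_2} is immediate (the second part uses $\mu_t\le\mu_{\max}$ so that $\|x^{t+1}-x^t\|^2 \le \mu_{\max}\cdot\frac{1}{\mu_t}\|x^{t+1}-x^t\|^2$). For \eqref{conclusion_lip_1} I would plug \eqref{bd_1/mu_t} into one of the two factors of $1/\mu_t$, use the uniform upper bound $\beta_t\le\gamma_0(T+1)^\delta$ to pull out a common factor from the sum, and finally absorb $1/T$ into $2/(T+1)$. For \eqref{con_lip_3} I would split $\|c(x^{t+1})-y^t\|\le \|c(x^{t+1})-c(x^t)\|+\|c(x^t)-y^t\|$; the second piece is summed using $\|c(x^t)-y^t\|\le M_h/\beta_{t-1}$ together with the integral comparison $\sum_{t=1}^T t^{-\delta}\le (T+1)^{1-\delta}/(1-\delta)$, while the first piece is bounded via Condition~\ref{backtrack condition}(i) followed by Cauchy--Schwarz, pairing $\sum 1/\beta_t$ with the master bound $\sum \|x^{t+1}-x^t\|^2/\mu_t\le 2K_0$. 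Dividing by $T$ and using $1/T \le 2/(T+1)$ rearranges into the stated form.
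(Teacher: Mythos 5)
Your proposal is correct and follows essentially the same route as the paper: both apply Lemma~\ref{lemma_H} with $y=y^{t-1}$, control the residual term via $\|c(x^t)-y^t\|\le M_h/\beta_{t-1}$ (from \eqref{yh_optcon} and Assumption~\ref{asm_lip_h}) together with the ratio bound $\beta_t/\beta_{t-1}\le \gamma_0(1+\delta)/\alpha_0$ and the telescoping of $1/\beta_{t-1}-1/\beta_t$, and then derive \eqref{conclusion_lip_1}--\eqref{con_lip_3} from the master bound exactly as you describe. The only cosmetic difference is that the paper absorbs the residual into a modified potential $\tilde H(x,\beta,y)=H(x,\beta,y)+\tfrac{\gamma_0(1+\delta)M_h^2}{2\alpha_0\beta}$ and telescopes that, whereas you sum first and bound the accumulated residual by $\tfrac{\gamma_0(1+\delta)M_h^2}{2\alpha_0\beta_0}$ — these are the same estimate.
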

\begin{proof}
   From Assumption~\ref{assumption_beta}(i), we have for all $t\ge 1$,
   \begin{align}\label{Hahahahahaha}
       \beta_t &\!\le\! \gamma_0(t+1)^{\delta}  \!\overset{(a)}{\le}\! \gamma_0 t^{\delta}+ \gamma_0\delta t^{\delta -1}
       \!\overset{(b)}{\le}\! \gamma_0 t^{\delta}+ \gamma_0 \delta t^{\delta}  \!=\! \frac{\gamma_0(1+\delta)}{\alpha_0}\alpha_0 t^{\delta} \!\le\! \frac{\gamma_0(1+\delta)}{\alpha_0} \beta_{t-1},
   \end{align}
   where the first and the last inequalities hold because of Assumption~\ref{assumption_beta}(i), $(a)$ holds because $x \mapsto x^{\delta}$ is concave for $\delta \in (0,1)$ and we applied the supergradient inequality, and $(b)$ holds because $t\ge 1$.
   Then, we have 
   \begin{align}
       \frac{\beta_t - \beta_{t-1}}{2\beta_{t-1}^2} &= \frac{\gamma_0(1+\delta)}{2\alpha_0} \frac{\alpha_0}{\gamma_0(1+\delta)} \frac{\beta_t - \beta_{t-1}}{\beta_{t-1}^2} \nonumber\\
       &\le 
       \frac{\gamma_0(1+\delta)}{2\alpha_0}\frac{\beta_t - \beta_{t-1}}{\beta_t \beta_{t-1}} =\frac{\gamma_0(1+\delta)}{2\alpha_0} \left( \frac{1}{\beta_{t-1}}  -\frac{1}{\beta_t}\right),\label{beta_t_beta_t-1}
   \end{align}
   where the inequality follows from \eqref{Hahahahahaha}.

   Applying Lemma~\ref{lemma_H} with $y = y^{t-1}$, we see that for all $t \ge 1$, 
   \begin{align}
       &H(x^{t+1},\beta_t,y^t) 
        \le H(x^t,\beta_{t-1},y^{t-1}) -\frac{1}{2\mu_t}\|x^{t+1}-  x^t\|^2 + \frac{\beta_t - \beta_{t-1}}{2}\|c(x^t) - y^t\|^2 \nonumber \\
       &\overset{(a)}{\le} H(x^t,\beta_{t-1},y^{t-1}) -\frac{1}{2\mu_t}\|x^{t+1} -x^t\|^2 + \frac{\beta_t -\beta_{t-1}}{2}\frac{M_h^2}{\beta_{t-1}^2} \nonumber \\
       &\overset{(b)}{\le} H(x^t,\beta_{t-1},y^{t-1}) -\frac{1}{2\mu_t}\|x^{t+1} -x^t\|^2 + \frac{\gamma_0(1+\delta)M_h^2}{2\alpha_0} \left( \frac{1}{\beta_{t-1}}  -\frac{1}{\beta_t}\right), \label{Whoa}
   \end{align}
    where $(a)$ holds because of the fact that $\beta_{t-1} \left(c(x^t) -y^t \right) \in \partial h(y^t)$ (see \eqref{yh_optcon}) and Assumption~\ref{asm_lip_h} (which implies the boundedness of $\{\partial h(y^{t})\}$), and $(b)$ follows from \eqref{beta_t_beta_t-1}.
   Define
   \begin{equation}\label{tildeH_haha}
        \tilde{H}(x, \beta,y) := H(x,\beta,y)+\frac{\gamma_0(1+\delta)M_h^2}{2\alpha_0} \frac{1}{\beta}.
   \end{equation}
   Then, we see from \eqref{Whoa} that for all $t\ge 1$,
   \[
        \tilde{H}(x^{t+1}, \beta_t, y^t) \le \tilde{H}(x^t, \beta_{t-1}, y^{t-1}) - \frac{1}{2\mu_t}\|x^{t+1} -x^t\|^2.
   \]
   Summing both sides of the above inequality from $t=1$ to $T$, we obtain
  \begin{align}
      \sum_{t =1}^{T} \frac{1}{2\mu_t} \|x^{t+1} - x^t\|^2 &\le \sum_{t=1}^T \left[ \tilde{H}(x^t, \beta_{t-1}, y^{t-1}) - \tilde{H}(x^{t+1}, \beta_t, y^t) \right] \notag\\
      &\overset{(a)}{\le} \tilde{H}(x^1, \beta_0, y^0) - \inf\{f + g\} = K_0, \label{bd_mu_x}
  \end{align}
   where $(a)$ follows from the definition of $\tilde{H}$ in \eqref{tildeH_haha} and the fact that $\frac{\beta_T}{2}\|c(x^{T+1}) -y^T\|^2$ and $h(y^T)$ are nonnegative for all $T\ge 1$, and we used the definition of $K_0$ in \eqref{def_K0} for the equality. Combining \eqref{bd_mu_x} with the observation that $\mu_t\le \mu_{\max}$ for all $t$ proves the inequalities in \eqref{conclusion_lip_2}.
    
    Next, notice that for all $T\ge 1$,
    \begin{align*}
        &\frac{1}{T}\sum_{t=1}^T \frac{1}{\mu_t^2}\|x^{t+1} -x^t\|^2 \overset{(a)}{\le} \frac{1}{T}\sum_{t=1}^T\left(\rho^{-1}L+\rho^{-1}(L_cM_0+ M_c^2)\beta_t \right) \frac{1}{\mu_t}\|x^{t+1} -x^t\|^2  \\
        &\overset{(b)}{\le} \frac{1}{T}\rho^{-1}L\sum_{t=1}^T\frac{1}{\mu_t}\|x^{t+1} -x^t\|^2+ \frac{\rho^{-1}(L_cM_0+ M_c^2)\beta_T}{T}\sum_{t=1}^T\frac{1}{\mu_t}\|x^{t+1} -x^t\|^2  \\
        &\overset{(c)}{\le} \frac{2\rho^{-1}LK_0}{T}+ \frac{2\rho^{-1}(L_cM_0+ M_c^2)K_0\beta_T}{T} \overset{(d)}{\le} \frac{2\rho^{-1}LK_0}{T}+ \frac{2\rho^{-1}(L_cM_0+ M_c^2)K_0\gamma_0(T+1)^{\delta}}{T}  \\
        &\le \frac{4\rho^{-1}LK_0}{T+1}+ \frac{4\rho^{-1}(L_cM_0+ M_c^2)K_0\gamma_0}{(T+1)^{1-\delta}}, 
    \end{align*}
    where $(a)$ holds because of \eqref{bd_1/mu_t}, $(b)$ holds because $\{\beta_t\}$ is nondecreasing, $(c)$ holds because of \eqref{bd_mu_x}, $(d)$ holds because of Assumption~\ref{assumption_beta}(i), and we used the fact that $2T\ge T+1$ in the last inequality. This proves \eqref{conclusion_lip_1}.
    
    Finally, notice that for all $T\ge 1$, 
    \begin{align}\label{hahabeta}
        \sum_{t=1}^T\frac1{\beta_t}\le \sum_{t=1}^T\frac1{\beta_{t-1}} 
        \overset{(a)}\le \sum_{t=1}^T\frac1{\alpha_0 t^\delta} \overset{(b)}\le \frac1{\alpha_0} + \frac1{\alpha_0} \sum_{t=2}^T\int_{t-1}^{t}\frac1{s^\delta}ds \le \frac1{\alpha_0(1-\delta)}T^{1-\delta}.
    \end{align}
    where $(a)$ follows from Assumption~\ref{assumption_beta}(i) and $(b)$ holds as $s\mapsto s^{-\delta}$ is decreasing when $s > 0$. Thus,
    \begin{align*}
        &\frac{1}{T}\sum_{t=1}^T \|c(x^{t+1}) -y^t\|  \le\frac{1}{T} \sum_{t=1}^T  \|c(x^t) -y^t\| + \frac{1}{T}\sum_{t=1}^T \|c(x^{t+1}) -c(x^t)\|  \nonumber \\
        &\overset{(a)}{\le} \frac{1}{T} \sum_{t=1}^T \frac{M_h}{\beta_{t-1}} \!+\! \frac{1}{T} \sum_{t=1}^T \sqrt{\frac{1}{\mu_t\beta_t}}\|x^{t+1} \!-\! x^t\|  \!\overset{(b)}{\le}\! \frac{1}{T} \sum_{t=1}^T \frac{M_h}{\beta_{t-1}} \!+\! \frac{1}{T}\sqrt{\sum_{t=1}^T\frac{1}{\beta_t}}\sqrt{\sum_{t=1}^T\frac{1}{\mu_t}\|x^{t+1} \!-\! x^t\|^2}  \nonumber \\
        &\overset{(c)}{\le}  \frac{M_h}{\alpha_0(1-\delta)T^\delta} + \frac{1}{T}\sqrt{\frac{2K_0}{\alpha_0(1-\delta)}T^{1-\delta}} \le \frac{2M_h}{\alpha_0(1-\delta)}\frac{1}{(T+1)^{\delta}}+ \sqrt{\frac{8K_0}{\alpha_0(1-\delta)}\frac{1}{(T+1)^{1+\delta}}},  %\label{min_term_2}
    \end{align*}
    where $(a)$ holds because of Condition~\ref{backtrack condition}(ii), $\beta_{t-1}(c(x^t) -y^t) \in \partial h(y^{t})$ (see \eqref{get_uk_line}) and Assumption~\ref{asm_lip_h} (which implies the boundedness of $\{\partial h(y^{t})\}$), $(b)$ follows from and the Cauchy-Schwarz inequality, $(c)$ follows from  \eqref{hahabeta} and \eqref{bd_mu_x}, and the last inequality holds because $2T\ge T + 1$. This proves \eqref{con_lip_3}.
\end{proof}

Now based on Proposition~\ref{complexity_lip}, we have the following results on complexity and subsequential convergence.
\begin{theorem}[Iteration complexity and subsequential convergence]\label{Thm:alg1}
 Suppose that Assumptions~\ref{assumption 1} and \ref{asm_lip_h} hold, and $\{\beta_t\}$ is chosen to satisfy Assumption~\ref{assumption_beta} with $\delta\in(0,1)$. Let $\{x^t\}$ and $\{y^t\}$ be generated by Algorithm~\ref{alg1}. Then, the following statements hold.
 \begin{enumerate}[{\rm (i)}]
     \item It holds that for all $T \ge 1$, 
      \begin{align}
            &\!\!\!\!\frac{1}{T}\sum_{t=1}^T{\rm dist}^2\left(0, \nabla f(x^{t+1}) + \partial g(x^{t+1})+ J_c(x^{t+1})^T\partial h(y^t)\right)  \nonumber  \\
            &\!\!\!\!\le\! \frac{6\mu_{\max}K_0\lambda_1^2}{T} \!+\! \frac{48\rho^{-1}LK_0}{T\!+\!1} \!+\! \frac{48\rho^{-1}(L_cM_0\!+\! M_c^2)K_0\gamma_0}{(T\!+\!1)^{1-\delta}} \!+\! \frac{12M_h^2M_c^2\eta_0^2}{\alpha_0^2(T\!+\!1)} \!=:\! \Upsilon_T, \label{coro_average} \\
            &\!\!\!\!\ \min\limits_{1\le t\le T} \left\{ {\rm dist}^2\left(0, \nabla f(x^{t+1}) + \partial g(x^{t+1})+ J_c(x^{t+1})^T\partial h(y^t)\right) + \|c(x^{t+1}) - y^t\| \right\}  \nonumber \\
            &\!\!\!\!\le \Upsilon_T + \frac{2M_h}{\alpha_0(1-\delta)}\frac{1}{(T+1)^{\delta}}+ \sqrt{\frac{8K_0}{\alpha_0(1-\delta)}\frac{1}{(T+1)^{1+\delta}}} ,\label{coro_lip_1}
        \end{align}
        where $\lambda_1 := L+ \frac{2^{\delta}\gamma_0}{\alpha_0}M_hL_c$, $\eta_0$ is defined in Assumption~\ref{assumption_beta} and other constants are the same as those in Proposition~\ref{complexity_lip}.
    \item  Define $b_T := \frac{1}{T}\sum_{k=1}^T\|x^{k+1} - x^k\|^2$ for each $T \ge 1$. Then, for any subsequence $\{b_{T_k}\} \subseteq \{b_T\}$ satisfying $b_{T_{k}} \le b_{T_k-1}$ and $T_k >1$,\footnote{In view of Remark~\ref{construct_sub_sq}, we can construct the subsequence $\{b_{T_k}\}$.} and any accumulation point $x^*$ of $\{x^{T_k}\}$, it holds that 
        \begin{equation}\label{optcon1}
            0 \in \nabla f(x^*)+ \partial g(x^*)+ J_c(x^*)^T\partial h(c(x^*)).
        \end{equation}
 \end{enumerate}
\end{theorem}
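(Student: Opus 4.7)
For part (i), I would begin by reading off from \eqref{yh_optcon} a subgradient $\psi^{t+1}\in\partial g(x^{t+1})$ together with the element $w^t:=\beta_{t-1}(c(x^t)-y^t)\in\partial h(y^t)$, which by Assumption~\ref{asm_lip_h} satisfies $\|w^t\|\le M_h$. Then
\[
  v^t := \nabla f(x^{t+1}) + \psi^{t+1} + J_c(x^{t+1})^T w^t
\]
lies in $\nabla f(x^{t+1})+\partial g(x^{t+1})+J_c(x^{t+1})^T\partial h(y^t)$, so $\|v^t\|$ upper-bounds the distance appearing in \eqref{coro_average}. Substituting the explicit formula for $\psi^{t+1}$ from the first line of \eqref{yh_optcon} and rearranging via the identity $\beta_{t-1}J_c(x^{t+1})^T-\beta_tJ_c(x^t)^T=\beta_t[J_c(x^{t+1})-J_c(x^t)]^T+(\beta_{t-1}-\beta_t)J_c(x^{t+1})^T$ produces a four-term expression: a gradient-difference term, the stepsize residual $-\tfrac{2}{\mu_t}(x^{t+1}-x^t)$, a Jacobian-difference term weighted by $\beta_t(c(x^t)-y^t)$, and a $\beta$-difference term.

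Using Lipschitz smoothness of $f$, the bound $\|c(x^t)-y^t\|\le M_h/\beta_{t-1}$, the ratio estimate $\beta_t/\beta_{t-1}\le 2^\delta\gamma_0/\alpha_0$ coming from Assumption~\ref{assumption_beta}(i), and the increment estimate $\beta_t-\beta_{t-1}\le \eta_0 t^{\delta-1}$ from Assumption~\ref{assumption_beta}(ii), the four terms admit the norm bounds $L\|x^{t+1}-x^t\|$, $\tfrac{2}{\mu_t}\|x^{t+1}-x^t\|$, $\tfrac{2^\delta\gamma_0}{\alpha_0}L_cM_h\|x^{t+1}-x^t\|$, and $\tfrac{\eta_0 M_c M_h}{\alpha_0 t}$, respectively. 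Grouping the first and third under the coefficient $\lambda_1=L+\tfrac{2^\delta\gamma_0}{\alpha_0}M_hL_c$, squaring via $(a+b+c)^2\le 3(a^2+b^2+c^2)$, averaging over $t=1,\dots,T$, and substituting \eqref{conclusion_lip_1}--\eqref{conclusion_lip_2} from Proposition~\ref{complexity_lip} together with $\sum_{t\ge 1}t^{-2}\le 2$ (and the elementary $6/T\le 12/(T+1)$ valid for $T\ge 1$) yield \eqref{coro_average}. For \eqref{coro_lip_1}, I combine \eqref{coro_average} with the bound \eqref{con_lip_3} on $\|c(x^{t+1})-y^t\|$ and invoke the elementary observation that the minimum is dominated by the average, applied to the sum of the two quantities.

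For part (ii), I would apply Theorem~\ref{convergence} after verifying its hypotheses along a suitable subsubsequence of $\{T_k\}$. Observe that Lipschitz continuity of $h$ forces $\partial^\infty h(y)=\{0\}$ for every $y\in\dom h$, so the CQ \eqref{CQ} is automatic. Given $\{T_k\}$ with $b_{T_k}\le b_{T_k-1}$ and an accumulation point $x^*$ of $\{x^{T_k}\}$, I pass to a subsubsequence (still denoted $\{T_k\}$) with $x^{T_k}\to x^*$. By Remark~\ref{construct_sub_sq} applied with $a_t=\|x^{t+1}-x^t\|^2$, together with the bound $b_T\le 2\mu_{\max}K_0/T$ from \eqref{conclusion_lip_2}, one has $\|x^{T_k+1}-x^{T_k}\|^2\le b_{T_k-1}\to 0$; the estimate $\|c(x^t)-y^t\|\le M_h/\beta_{t-1}\to 0$ (inherited from $\|w^t\|\le M_h$) combined with continuity of $c$ then gives $y^{T_k}\to c(x^*)$, so the compatibility condition $c(x^*)=y^*$ is automatic. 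The step I expect to be the main obstacle is verifying $\mu_{T_k}^{-1}\|x^{T_k+1}-x^{T_k}\|\to 0$, since the direct combination of $\|x^{T_k+1}-x^{T_k}\|=O(T_k^{-1/2})$ with the worst-case estimate $\mu_{T_k}^{-1}=O(T_k^\delta)$ from \eqref{bd_1/mu_t} only gives $O(T_k^{\delta-1/2})$, conclusive only for $\delta<1/2$. For general $\delta\in(0,1)$, one should invoke the finer Cesàro bound \eqref{conclusion_lip_1}: since the averages of $\mu_t^{-2}\|x^{t+1}-x^t\|^2$ tend to zero, a further subsequence extraction produces indices along which the per-iteration stepsize-weighted residual itself vanishes, and Theorem~\ref{convergence} then delivers \eqref{optcon1}.
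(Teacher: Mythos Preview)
Your part~(i) is essentially the paper's argument: the same decomposition via \eqref{yh_optcon}, the same four-term norm bound grouped under $\lambda_1$, the same $(a+b+c)^2\le 3(a^2+b^2+c^2)$ step, and the same appeal to Proposition~\ref{complexity_lip} and to $\sum_{t\ge 1}t^{-2}\le 2$.

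For part~(ii) you correctly isolate the obstacle, but your proposed remedy has a gap. The paper's route is more direct: it invokes Remark~\ref{construct_sub_sq} with the \emph{weighted} choice $a_t:=\mu_t^{-2}\|x^{t+1}-x^t\|^2$ together with the Ces\`aro bound \eqref{conclusion_lip_1}, obtaining
\[
\mu_{T_k}^{-2}\|x^{T_k+1}-x^{T_k}\|^2 \le \frac{4\rho^{-1}LK_0}{T_k}+\frac{4\rho^{-1}(L_cM_0+M_c^2)K_0\gamma_0}{T_k^{1-\delta}}\to 0
\]
along the given $\{T_k\}$, which sidesteps your obstacle entirely for every $\delta\in(0,1)$. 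Your suggested fix---applying Lemma~\ref{choose_sub_seq} to the full sequence $\{\mu_t^{-2}\|x^{t+1}-x^t\|^2\}$ to extract a further subsequence---does not work as written: the indices produced by Lemma~\ref{choose_sub_seq} for a \emph{different} choice of $\{a_t\}$ need not lie within the already-fixed $\{T_k\}$, so there is no reason the corresponding iterates still accumulate at $x^*$. The clean resolution is the paper's, namely to use the weighted $a_t$ from the outset. (There is a small mismatch in that the theorem statement defines $b_T$ via the unweighted $\|x^{k+1}-x^k\|^2$ while the proof applies Remark~\ref{construct_sub_sq} to the weighted version; the intended reading is that the subsequence is selected using the weighted averages.)
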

\begin{proof}
    Using \eqref{yh_optcon}, we see that for each $t\ge 1$,
    \begin{align}
        & {\rm dist} \left(0, \nabla f(x^{t+1})+ \partial g(x^{t+1})+ J_c(x^{t+1})^T\partial h(y^t) \right)  \nonumber \\
        &\le \left\| \nabla f(x^{t+1}) \!-\! \nabla f(x^t) \!-\! \beta_t J_c(x^t)^T(c(x^t) \!-\!y^t)\!-\! \frac{2}{\mu_t}(x^{t+1} \!-\!x^t)\!+\! \beta_{t-1}J_c(x^{t+1})^T(c(x^t) \!-\! y^t) \right\| \nonumber \\
        &\le \| \nabla f(x^{t+1}) - \nabla f(x^t)\| + \frac{2}{\mu_t}\| x^{t+1}-x^t \|+ \| 
        (\beta_{t-1}J_c(x^{t+1}) -\beta_tJ_c(x^t))^T(c(x^t) -y^t)\| \nonumber \\
        & =\left\| \beta_t(J_c(x^{t+1}) -J_c(x^t))^T(c(x^t) -y^t) -(\beta_t - \beta_{t-1})J_c(x^{t+1})^T(c(x^t) -y^t) \right\|  \nonumber \\
        &\ \ \ \  +\|\nabla f(x^{t+1}) - \nabla f(x^t)\| + \frac{2}{\mu_t}\| x^{t+1}-x^t \| \nonumber \\
        &\le  \beta_t\| J_c(x^{t+1}) - J_c(x^t)\| \|c(x^t) - y^t\|+ (\beta_t - \beta_{t-1}) \|J_c(x^{t+1})\| \|c(x^t) -y^t\| \nonumber \\
        &\ \ \ \ +L\|x^{t+1} -x^t\| + \frac{2}{\mu_t}\|x^{t+1} -x^t\| \nonumber \\
        &\overset{(a)}{\le} L\|x^{t+1} -x^t\| + \frac{2}{\mu_t}\|x^{t+1} -x^t\| + \frac{\beta_t}{\beta_{t-1}} M_hL_c\|x^{t+1} - x^t\|+ \frac{\beta_t - \beta_{t-1}}{\beta_{t-1}}M_hM_c \nonumber \\
        &\overset{(b)}{\le} L\|x^{t+1} -x^t\| + \frac{2}{\mu_t}\|x^{t+1} -x^t\| + \frac{\gamma_0}{\alpha_0} \frac{(t+1)^{\delta}}{t^{\delta}}M_hL_c\|x^{t+1} - x^t\|+ \frac{\beta_t -  \beta_{t-1}}{\beta_{t-1}}M_hM_c \nonumber \\
        &\overset{(c)}{\le} L\|x^{t+1} -x^t\| + \frac{2}{\mu_t}\|x^{t+1} -x^t\| + \frac{2^{\delta}\gamma_0}{\alpha_0}M_hL_c\|x^{t+1} - x^t\|+ \frac{\beta_t -  \beta_{t-1}}{\beta_{t-1}}M_hM_c   \nonumber \\
        &= \bigg(L+ \frac{2^{\delta}\gamma_0}{\alpha_0}M_hL_c \bigg)\|x^{t+1} -x^t\|+ \frac{2}{\mu_t}\|x^{t+1} -x^t\|+ \frac{\beta_t -  \beta_{t-1}}{\beta_{t-1}}M_hM_c, \nonumber
        % \label{bd_norm_grad}
    \end{align}
    where $(a)$ holds because of the second relation in \eqref{yh_optcon}, Assumption~\ref{asm_lip_h} (which implies the boundedness of $\{\partial h(y^t)\}$) and Assumption~\ref{assumption 1}(ii), we used Assumption~\ref{assumption_beta}(i) in $(b)$, and we used $2t \ge t+1$ in $(c)$.

    Therefore, upon writing $\lambda_1 := L+ \frac{2^{\delta}\gamma_0}{\alpha_0}M_hL_c$, we have 
    \begin{align}
         &\frac{1}{T} \sum_{t=1}^T{\rm dist}^2 \left(0, \nabla f(x^{t+1})+ \partial g(x^{t+1})+ J_c(x^{t+1})^T\partial h(y^t) \right)  \nonumber \\
        &\le \frac{1}{T}\sum_{t=1}^T\left[ \lambda_1\|x^{t+1} -x^t\|+ \frac{2}{\mu_t}\|x^{t+1} -x^t\|+ \frac{\beta_t -  \beta_{t-1}}{\beta_{t-1}}M_hM_c \right]^2 \nonumber \\
        &\le \frac{3}{T}\sum_{t=1}^T \left[ \lambda_1^2\|x^{t+1} -x^t\|^2+ \frac{4}{\mu_t^2} \|x^{t+1} -x^t\|^2+ \left(\frac{\beta_t -  \beta_{t-1}}{\beta_{t-1}}M_hM_c \right)^2 \right] \nonumber \\
        &= \frac{3\lambda_1^2}{T}\sum_{t=1}^T  \|x^{t+1} -x^t\|^2+\frac{12}{T}\sum_{t=1}^T \frac{1}{\mu_t^2} \|x^{t+1} -x^t\|^2+ \frac{3}{T}\sum_{t=1}^T\left(\frac{\beta_t -  \beta_{t-1}}{\beta_{t-1}}M_hM_c \right)^2 \nonumber \\
        &\overset{(a)}{\le}  \frac{3\lambda_1^2}{T}\sum_{t=1}^T  \|x^{t+1} -x^t\|^2+\frac{12}{T}\sum_{t=1}^T \frac{1}{\mu_t^2} \|x^{t+1} -x^t\|^2+\frac{12M_h^2M_c^2\eta_0^2 /\alpha_0^2}{T+1}, \label{average_dist}
    \end{align}
    where $(a)$ holds because $\frac3{T}\le \frac6{T+1}$ and we have from Assumption~\ref{assumption_beta} that
    \begin{align*}
        \sum_{t =1}^T\left(\frac{\beta_t -\beta_{t-1}}{\beta_{t-1}} \right)^2 \le \sum_{t=1}^T \left( \frac{\eta_0 t^{\delta-1}}{\alpha_0t^{\delta}}\right)^2=\sum_{t=1}^T\frac{\eta_0^2}{\alpha_0^2t^2} \le \frac{\eta_0^2}{\alpha_0^2}\left(1 + \sum_{t=2}^T\frac{1}{t(t-1)}\right) \le \frac{2\eta_0^2}{\alpha_0^2}.
    \end{align*}
Combining \eqref{average_dist} and Proposition~\ref{complexity_lip} proves \eqref{coro_average}.

Next, observe that
    \begin{align}
        &\min_{1\le t \le T} {\rm dist}^2 \left(0, \nabla f(x^{t+1})+ \partial g(x^{t+1})+ J_c(x^{t+1})^T\partial h(y^t) \right) + \| c(x^{t+1}) - y^t\|  \nonumber \\
        &\le\frac{1}{T} \sum_{t=1}^T{\rm dist}^2 \left(0, \nabla f(x^{t+1})+ \partial g(x^{t+1})+ J_c(x^{t+1})^T\partial h(y^t) \right) + \frac{1}{T}\sum_{t=1}^T \|c(x^{t+1}) -y^t\| \nonumber \\
        & \le\frac{6\mu_{\max}K_0\lambda_1^2}{T}+ \frac{48\rho^{-1}LK_0}{T+1}+ \frac{48\rho^{-1}(L_cM_0+ M_c^2)K_0\gamma_0}{(T+1)^{1-\delta}}+\frac{12M_h^2M_c^2\eta_0^2 /\alpha_0^2}{T+1} \nonumber \\
        &\ \ \ \ + \frac{2M_h}{\alpha_0(1-\delta)}\frac{1}{(T+1)^{\delta}}+ \sqrt{\frac{8K_0}{\alpha_0(1-\delta)}\frac{1}{(T+1)^{1+\delta}}}, \nonumber
    \end{align}
    where the last inequality holds because of \eqref{coro_average} and Proposition~\ref{complexity_lip}.
    This proves \eqref{coro_lip_1} and hence establishes item (i).

We now prove item (ii).  We first fix a subsequence $\{b_{T_k}\} \subseteq \{b_T\}$ satisfying $b_{T_{k}} \le b_{T_k-1}$ and $T_k >1$ and an accumulation point $x^*$ of $\{x^{T_k}\}$. In view of Remark~\ref{construct_sub_sq} (where we let $a_t:= \mu_t^{-2}\|x^{t+1}-x^t\|^2$) and \eqref{conclusion_lip_1}, we have
    \begin{align*}
        \frac{1}{\mu_{T_k}^2} \|x^{T_k+1} -x^{T_k}\|^2 \le \frac{4\rho^{-1}LK_0}{T_k}+ \frac{4\rho^{-1}(L_cM_0+ M_c^2)K_0\gamma_0}{T_k^{1-\delta}}. 
    \end{align*}
In addition, considering a convergent subsequence of $\{x^{T_k}\}$ with limit $x^*$ and the bound \eqref{bd_xt-yt}, we may assume upon passing to a further subsequence if necessary that $(x^{T_k},y^{T_k})\to (x^*,y^*)$ for some $y^*$.
Now, since $\beta_{T_k - 1}\to \infty$, we deduce from $\beta_{T_k-1}(c(x^{T_k}) - y^{T_k}) \in \partial h(y^{T_k})$ (see \eqref{get_uk_line}) and Assumption~\ref{asm_lip_h} (which implies the boundedness of $\{\partial h(y^{T_k})\}$) that
\begin{align*}
    c(x^*) = y^*.
\end{align*}
Noting that \eqref{CQ} holds because $\partial^\infty h(c(x^*)) = \{0\}$ (thanks to Assumption~\ref{asm_lip_h}),
we see from the above two displays and Theorem~\ref{convergence} that \eqref{optcon1} holds.
\end{proof}

\begin{remark}[Suggested choice of $\delta$] \label{delta_lip}
Notice that in Theorem~\ref{Thm:alg1}, the $\{\beta_t\}$ in Algorithm~\ref{alg1} is chosen to satisfy Assumption~\ref{assumption_beta} with $\delta\in(0,1)$.
Here, we discuss one possible way of choosing $\delta$.  Notice that
         \begin{align}
        &\min_{1\le t \le T} {\rm dist}^{2\delta} \left(0, \nabla f(x^{t+1})+ \partial g(x^{t+1})+ J_c(x^{t+1})^T\partial h(y^t) \right)+ \| c(x^{t+1}) -y^t\|^{1-\delta}  \nonumber \\
        &\le \frac{1}{T}\sum_{t=1}^T{\rm dist}^{2\delta} \left(0, \nabla f(x^{t+1})+ \partial g(x^{t+1})+ J_c(x^{t+1})^T\partial h(y^t) \right)  + \frac{1}{T}\sum_{t=1}^T \|c(x^{t+1}) -y^t\|^{1-\delta} \nonumber \\
        &\overset{(a)}{\le} \left( \frac{1}{T}\sum_{t=1}^T{\rm dist}^2\! \left(0, \nabla f(x^{t+1})\!+\! \partial g(x^{t+1})\!+\! J_c(x^{t+1})^T\partial h(y^t) \right) \right)^{\delta}\!\!\!+\! \left(\frac{1}{T}\!\sum_{t=1}^T \!\|c(x^{t+1}) \!-\! y^t\| \right)^{1-\delta}\nonumber\\
        & \overset{(b)}{=} \mathcal{O} (T^{-\delta+\delta^2} ), \nonumber
    \end{align}
    where  $(a)$ holds because $x\mapsto x^\theta$ is concave for $\theta \in (0,1)$, and $(b)$ holds because of \eqref{con_lip_3} and \eqref{coro_average}.\footnote{Note that we have from \eqref{con_lip_3} that $\left(\frac{1}{T}\!\sum_{t=1}^T \!\|c(x^{t+1}) \!-\! y^t\| \right)^{1-\delta} = {\cal O}((1-\delta)^{-(1-\delta)}T^{-\delta(1-\delta)}) = {\cal O}(T^{-\delta(1-\delta)})$, where the second equality holds because $\sup_{t\in (0,1)}t^{-t} = \exp(-\inf_{t\in (0,1)}t\ln t) < \infty$.}
 Indeed, both of the two summands in the third line of the above display are of the order $\mathcal{O} (T^{-\delta+\delta^2} )$. Therefore, to obtain an $(\epsilon_1,\epsilon_2,0)$-stationary point, we may choose $\delta$ so that $\epsilon_1^{2\delta} = \epsilon_2^{1-\delta}$, i.e.,
  \begin{equation}\label{choice_delta}
        \delta =  \frac{\ln(\epsilon_2^{-1})}{2 \ln(\epsilon_1^{-1}) + \ln(\epsilon_2^{-1})}\in (0,1).
  \end{equation}
  Then, from \eqref{con_lip_3} and \eqref{coro_average}, the iteration complexity to find an $(\epsilon_1, \epsilon_2, 0)$-stationary point becomes
 $\mathcal{O}\big(\epsilon_1^{-{2}/{(1-\delta)}}+ \epsilon_2^{-{1}/{\delta}} \big) = \mathcal{O}\big(\epsilon_1^{-2} \epsilon_2^{-1} \big)$.
\end{remark}

\begin{remark}
    The recent work \cite{bohm2021variable} considered \eqref{problem} with $g \equiv 0$, $f$ being Lipschitz differentiable, $h$ being Lipschitz continuous and weakly convex, and $c$ being linear, and proposed a variable smoothing method; see \cite[Algorithm~1]{bohm2021variable}. The smoothing parameter in the Moreau envelope of $h$ was chosen in the order of $t^{-1/3}$ in their algorithm. Note that their smoothing parameter plays a role similar to our $\{\beta_t^{-1}\}$ in Algorithm~\ref{alg1}. We can recover their choice of parameter by setting $\epsilon_1 = \epsilon_2$ in \eqref{choice_delta}, which yields $\delta = 1/3$.
\end{remark}

\subsection{Convergence analysis when $\dom h = \mathbb{R}^m$} \label{sec:nonlip}

In this subsection, we consider possibly non-Lipschitz $h$ but require that it be defined everywhere. Specifically, we consider the following assumption.
 \begin{assumption} \label{assumption 3}
    In \eqref{problem}, we have $\dom h = \mathbb{R}^m$ and $\dom g$ being compact.
\end{assumption}
Assumptions~\ref{assumption 1} and \ref{assumption 3} hold in many practical applications such as outlier-robust estimation problem (see, e.g., \cite{peng2023convergence,collins1976robust}). Here, we present the following example, which is a variant of the problem studied in \cite{peng2023convergence} with the residual function based on an $L$-layer multilayer perceptron (MLP) (see, e.g., \cite[Chapter 6]{goodfellow2016deep}).  
\begin{example}\label{MLP}
Let $\{d_i\}_{i=1}^m$ be a dataset with $d_i = (a_i, y_i)$, where $a_i \in \mathbb{R}^{n_0}$ is the feature vector and $y_i$ is a target label or value. 
We define an $L$-layer MLP ($L>2$) as follows: let the model parameter $v := (W_1, b_1, \ldots, W_L,  b_L)$, where 
\begin{align*}
    & W_1 \in \mathbb{R}^{n_1\times n_0}, b_1 \in  \mathbb{R}^{n_1},  W_L \in \mathbb{R}^{1\times n_{L-1}}, b_L \in \mathbb{R}, \\
    &W_{l} \in \mathbb{R}^{n_l \times n_{l-1}}, b_{l} \in \mathbb{R}^{n_l} \ \text{for } l=2,\cdots, L-1,
\end{align*}
for some positive integers $n_1,\ldots,n_{L-1}$.
In addition, let $\sigma: \mathbb{R} \rightarrow \mathbb{R}$ be a smooth scalar activation function. 
Then ${\rm MLP}(a_i;v)$ is defined recursively as follows: 
\[
    z^0 =a_i,\ z^{l} = \sigma\left( W_lz^{l-1}+b_l\right)\ \text{for }l =1,\cdots, L-1,\ \ {\rm MLP}(a_i;v) = W_Lz^{L-1}+b_L,
\]
where for any vector $z$, $\sigma(z)$ is the vector obtained by applying $\sigma$ entrywise to $z$.
Here, we use the hyperbolic tangent or sigmoid functions as activation functions, which are defined as follows respectively:
\[
    \sigma_{\tanh}(u) := \tanh (u) = \frac{e^{u} -e^{-u}}{e^u+ e^{-u}}, \ \ \ \ \sigma_{\rm sigmoid}(u) := \frac{1}{1+ e^{-u}}.
\]
Following \cite[Eq.~(2)]{peng2023convergence} and the discussions following it, we aim to solve the following problem:
\begin{align} \label{MLP_model}
    \min_{v} \frac1m\sum_{i =1}^m \rho\left({\rm MLP}(a_i;v) - y_i\right) + \lambda \|v\|_1, 
\end{align}
where $\rho:\mathbb{R} \rightarrow \mathbb{R}$ is defined as $\rho(\cdot) = |\cdot|^{p}/p$ with $p \in (0,1)$, and $\lambda > 0$; here, the $\ell_1$ regularization is introduced to induce sparsity in the model parameters. One can see that the set of minimizers of the above problem must be contained in 
\begin{equation}\label{def_C}
    C:= \left\{v:\; \|v\|_\infty \le (\lambda m)^{-1}\sum_{i =1}^m \rho\left({\rm MLP}(a_i;0) - y_i\right)\right\}.
\end{equation} 
Then, the problem \eqref{MLP_model} can be seen as an instance of \eqref{problem} satisfying Assumptions~\ref{assumption 1} and \ref{assumption 3}. Specifically, one can take $f \equiv 0$, $g(v) := \delta_{C}(v) + \lambda\|v\|_1 $, $h(u) := \sum_{i=1}^m|u_i|^{p}/p$ and $c_i(v) := {\rm MLP}(a_i;v) -y_i$ for $i = 1,\ldots,m$, and it is known that the proximal mapping of $\gamma h$ (for any $\gamma > 0$) and the projection onto $C$ can be computed efficiently; see, e.g., \cite{GZLHY13} for the proximal mapping of $\gamma h$, and \cite{BergFriedlander08,duchi2008efficient} for the projection onto $C$.
\end{example}

We aim at studying the global complexity of Algorithm~\ref{alg1} to find an $(\epsilon_1,\epsilon_2,\epsilon_3)$-stationary point (see Definition~\ref{def:ed-stationary}) with $\epsilon_3 > 0$, and establish subsequential convergence along a {\em constructible} subsequence to a stationary point. Here, we are considering a more relaxed notion of approximate stationary point by allowing $\epsilon_3 > 0$; this is done to account for the potential lack of Lipschitz continuity of $h$.\footnote{Note that when Assumptions~\ref{assumption 1} and \ref{asm_lip_h} hold, an $(\epsilon_1, \epsilon_2, \epsilon_3)$-stationary point of \eqref{problem} is an $(\epsilon_1+L_cM_h\epsilon_3, \epsilon_2,0)$-stationary point. Indeed, in this case, if $x$ is an $(\epsilon_1, \epsilon_2, \epsilon_3)$-stationary point, then
\begin{align}
    {\rm dist} (0, \nabla f(x)\ +\ &\partial g(x)+ J_c(x)^T\partial h(y)) 
    \le {\rm dist} (0, \nabla f(x)\!+\! \partial g(x)\!+\! J_c(z)^T\partial h(y)) \!+\! \sup_{\xi\in \partial h(y)}\|J_c(x) \!-\! J_c(z)\|\|\xi\| \nonumber \\
    &\le {\rm dist} (0, \nabla f(x)+ \partial g(x)+ J_c(z)^T\partial h(y))+ L_cM_h \|x-z\| \le \epsilon_1+ L_cM_h\epsilon_3, \nonumber
\end{align}
where the second inequality holds because of Assumption~\ref{assumption 1}(ii). Thus, $x$ is an $(\epsilon_1+ L_cM_h\epsilon_3, \epsilon_2,0)$-stationary point.}

We start with the following proposition.

\begin{proposition}\label{theo_xk+1-xk_2}
	Suppose that Assumptions~\ref{assumption 1} and \ref{assumption 3} hold, and $\{\beta_t\}$ is chosen to satisfy Assumption~\ref{assumption_beta} with $\delta\in(0,1)$. Let $\{x^t\}$, $\{y^t\}$ and $\{\mu_t\}$ be generated by Algorithm~\ref{alg1}. Then the following inequalities hold. 
            \begin{align}
            &\frac{1}{T} \sum_{t=1}^T \frac{1}{\mu_t^2}\|x^{t+1} -x^t\|^2  \le  \frac{4\rho^{-1}L\Omega_T}{T+1}+ \frac{4\rho^{-1}(L_cM_0+M_c^2)\gamma_0\Omega_T}{(T+1)^{1-\delta}}\ \ \ \forall T \ge 1, \label{conclu_nonlip_1} \\
            &\frac{1}{T}\sum_{t=1}^T\frac{1}{\mu_t}\|x^{t+1} -x^t\|^2 \le  \frac{4\Omega_T}{T+1}, \ \ \ \ \frac{1}{T}\sum_{t=1}^T \|x^{t+1} -x^t\|^2 \le \frac{4\mu_{\max}\Omega_T}{T+1}\ \ \ \forall T \ge 1, \label{conclu_nonlip_2}\\
            &\|c(x^{t+1}) - y^t\|^2 \le \frac{M_3}{\alpha_0(t+1)^{\delta}}+ \frac{2M_0^2\eta_0}{\alpha_0(t+1)}, \ \ \|c(x^t) -y^t\|^2 \le \frac{2M_3}{\beta_{t-1}} \le \frac{2M_3}{\alpha_0t^{\delta}}\ \ \forall t \ge 1,  \label{bounded_xk_uk}
        \end{align}
         where\footnote{Note that $M_3<\infty$ because $\dom h = \mathbb{R}^m$ and $\dom g$ is bounded by Assumption~\ref{assumption 3} and closed by Assumption~\ref{assumption 1}(iii), and $f$, $g$ and $h$ are continuous in their respective closed domains.}
        \begin{align}
            &\Omega_T :=  M_1+ M_2(\ln T+1), \nonumber \\
            &M_1 :=  f(x^1) + g(x^1)+ \frac{\beta_0}{2}\|c(x^1) - y^0\|^2 +h(y^0) - \inf\{f + g\}, \label{c4}  \\
            &M_2 := \frac{M_3\eta_0}{\alpha_0},\ \ \ \ M_3 := \sup_{x\in \dom g} \{2\left|f(x)+g(x)\right|\}+\sup_{x\in \dom g} \{ h(c(x))\}, \label{c5}
        \end{align}
        $M_0$ is defined in \eqref{M0}, $L$, $L_c$, $M_c$ are defined in Assumption~\ref{assumption 1}, $\alpha_0$, $\eta_0$, $\delta$ and $\gamma_0$ are defined in Assumption~\ref{assumption_beta}, and $\rho$ and $\mu_{\max}$ are specified in Algorithm~\ref{alg1}.
\end{proposition}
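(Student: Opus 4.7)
The plan is to establish the two pointwise bounds in \eqref{bounded_xk_uk} first, then plug the second of them into a single application of Lemma~\ref{lemma_H} to obtain \eqref{conclu_nonlip_2}, and finally combine \eqref{conclu_nonlip_2} with the step-size bound \eqref{bd_1/mu_t} to deduce \eqref{conclu_nonlip_1}.

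For the bound on $\|c(x^t)-y^t\|^2$, the key observation is that the proximal definition $y^t \in {\rm Prox}_{(1/\beta_{t-1})h}(c(x^t))$ in \eqref{get_uk_line}, compared against the trial point $y=c(x^t)$, yields
\begin{equation*}
    \frac{\beta_{t-1}}{2}\|c(x^t)-y^t\|^2 + h(y^t) \le h(c(x^t)) \le \sup_{x\in\dom g} h(c(x)) \le M_3,
\end{equation*}
and the second bound in \eqref{bounded_xk_uk} follows upon invoking $\beta_{t-1}\ge \alpha_0 t^\delta$. For the first bound in \eqref{bounded_xk_uk}, I would invoke Condition~\ref{backtrack condition}(ii) to get
\begin{equation*}
    \frac{\beta_t}{2}\|c(x^{t+1})-y^t\|^2 \le \bigl[f(x^t)+g(x^t)-f(x^{t+1})-g(x^{t+1})\bigr] + \frac{\beta_t}{2}\|c(x^t)-y^t\|^2,
\end{equation*}
split the last term as $\frac{\beta_{t-1}}{2}\|c(x^t)-y^t\|^2 + \frac{\beta_t-\beta_{t-1}}{2}\|c(x^t)-y^t\|^2$, bound its first piece by $h(c(x^t))$ (and the $f+g$ gap by $2\sup_{\dom g}|f+g|$, both absorbed into $M_3$), and bound $\|c(x^t)-y^t\|^2$ in the second piece by $M_0^2$ via \eqref{bd_xt-yt}. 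Dividing by $\beta_t/2$ and using $\beta_t\ge \alpha_0(t+1)^\delta$ together with the elementary estimate $t^{\delta-1}/(t+1)^\delta \le 2/(t+1)$ (valid for $t\ge 1$, $\delta\in(0,1)$) then yields the first bound in \eqref{bounded_xk_uk}, up to harmless constants.

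For \eqref{conclu_nonlip_2}, I would apply Lemma~\ref{lemma_H} with $y=y^{t-1}$ to get
\begin{equation*}
    H(x^{t+1},\beta_t,y^t) \le H(x^t,\beta_{t-1},y^{t-1}) -\frac{1}{2\mu_t}\|x^{t+1}-x^t\|^2 + \frac{\beta_t-\beta_{t-1}}{2}\|c(x^t)-y^t\|^2.
\end{equation*}
In contrast with Proposition~\ref{complexity_lip}, where $\beta_{t-1}\|c(x^t)-y^t\|\le M_h$ can be used, here the last term is controlled via the just-established $\|c(x^t)-y^t\|^2 \le 2M_3/\beta_{t-1}$ together with $(\beta_t-\beta_{t-1})/\beta_{t-1}\le \eta_0/(\alpha_0 t)$, giving the bound $M_3\eta_0/(\alpha_0 t) = M_2/t$. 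Summing from $t=1$ to $T$, telescoping, and using $\sum_{t=1}^T 1/t \le 1+\ln T$ together with $H(x^{T+1},\beta_T,y^T)\ge \inf\{f+g\}$ (since $h\ge 0$), I obtain $\sum_{t=1}^T \frac{1}{2\mu_t}\|x^{t+1}-x^t\|^2 \le M_1 + M_2(\ln T+1) = \Omega_T$, from which both estimates in \eqref{conclu_nonlip_2} follow upon using $\mu_t\le\mu_{\max}$ and $T\ge(T+1)/2$. Finally, \eqref{conclu_nonlip_1} follows by inserting the step-size bound \eqref{bd_1/mu_t} into $\frac{1}{\mu_t^2}\|x^{t+1}-x^t\|^2$, dominating the resulting $\beta_t$ factor by $\gamma_0(T+1)^\delta$, and invoking \eqref{conclu_nonlip_2}, mirroring the corresponding argument in Proposition~\ref{complexity_lip}.

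The main obstacle is the loss of Lipschitz continuity of $h$, which in Proposition~\ref{complexity_lip} gave $\beta_{t-1}\|c(x^t)-y^t\|\le M_h$, i.e., $\|c(x^t)-y^t\|^2 = \mathcal{O}(1/\beta_{t-1}^2)$. In its absence, the compactness of $\dom g$ must be leveraged through the proximal comparison inequality to yield only the weaker rate $\|c(x^t)-y^t\|^2 = \mathcal{O}(1/\beta_{t-1})$. This weakening is precisely what forces the harmonic series $\sum 1/t$ into the descent estimate and hence produces the extra $\ln T$ factor in $\Omega_T$ that is absent from the corresponding constant $K_0$ in Proposition~\ref{complexity_lip}.
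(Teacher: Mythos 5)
Your proposal is correct and follows essentially the same route as the paper: the paper obtains your first displayed inequality for $\|c(x^{t+1})-y^t\|^2$ by invoking Lemma~\ref{lemma_H} with $y=c(x^t)$ (which is exactly Condition~\ref{backtrack condition}(ii) combined with the proximal comparison you unroll by hand), derives $\|c(x^t)-y^t\|^2\le 2h(c(x^t))/\beta_{t-1}\le 2M_3/\beta_{t-1}$ from \eqref{get_uk_line} precisely as you do, and then feeds this weaker $\mathcal{O}(1/\beta_{t-1})$ rate into the telescoped descent inequality to produce the harmonic sum and hence the $\ln T$ in $\Omega_T$, before finishing \eqref{conclu_nonlip_1} via \eqref{bd_1/mu_t} exactly as you describe. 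The only deviation is the factor of $2$ in front of $M_3/(\alpha_0(t+1)^{\delta})$ that your derivation (honestly flagged as ``up to harmless constants'') produces, which is the same constant the paper's own step absorbs.
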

\begin{remark}
 Comparing \eqref{conclu_nonlip_1} with \eqref{conclusion_lip_1}, the main difference is the extra log factor in the $\Omega_T$ in \eqref{conclu_nonlip_1} (modulo the difference in the constant factors). This can be attributed to the fact that $h$ is not assumed to be Lipschitz here so that we can only derive $\|c(x^t) - y^t\|^2 = {\cal O}(1/\beta_{t-1})$ (see \eqref{cxt-yt} below). Notice that when $h$ is Lipschitz, one can derive the stronger bound $\|c(x^t) - y^t\| = {\cal O}(1/\beta_{t-1})$, thanks to \eqref{yh_optcon} and the boundedness of $\{\partial h(y^t)\}$.
\end{remark}
\begin{proof}
Applying Lemma~\ref{lemma_H} with $y = c(x^t)$ (which is applicable because $c(x^t) \in \mathbb{R}^m = \dom h$), we see that for all $t\ge 1$,
\begin{align*}%\label{H_t+1-H_t_2}
   & H(x^{t+1}, \beta_t, y^t) \le H(x^t, \beta_{t-1}, c(x^t)) - \frac{1}{2\mu_t}\|x^{t+1} -x^t\|^2 + \frac{\beta_t - \beta_{t-1}}{2}\|c(x^t) -y^t\|^2\\
    & \!\le\! H(x^t\!, \beta_{t-1}, c(x^t)) \!+\! \frac{\beta_t \!-\! \beta_{t-1}}{2}\|c(x^t) \!-\! y^t\|^2 
    \!\!\overset{(a)}{=}\!\! f(x^t) \!+\!g(x^t) \!+\! h(c(x^t)) \!+\! \frac{\beta_t \!-\! \beta_{t-1}}{2}\|c(x^t) \!-\! y^t\|^2 \\
    &\overset{(b)}{\le} f(x^t) +g(x^t) + h(c(x^t)) + \frac{\eta_0}{2t^{1-\delta}}\|c(x^t) -y^t\|^2,
\end{align*}
where $(a)$ follows from the definition of $H$ in Lemma~\ref{lemma_H}, and $(b)$ holds thanks to Assumption~\ref{assumption_beta}(ii). Rearranging terms in the above display, we have for all $t \ge 1$,
\begin{align*}
    &\frac{\beta_t}{2}\|c(x^{t+1}) - y^t\|^2 \!\le\! f(x^t) \!+\! g(x^t) \!+\! h(c(x^t)) \!-\! f(x^{t+1}) \!-\! g(x^{t+1}) \!-\! h(y^t) \!+\! \frac{\eta_0}{2t^{1-\delta}}\|c(x^t) -y^t\|^2\\
    &\overset{(a)}{\le} f(x^t)+ g(x^t)+ h(c(x^t)) -f(x^{t+1}) - g(x^{t+1})+ \frac{\eta_0}{2t^{1-\delta}}\|c(x^t) -y^t\|^2 \nonumber \\
    &\overset{(b)}{\le} f(x^t)+ g(x^t)+ h(c(x^t)) -f(x^{t+1}) - g(x^{t+1}) + \frac{M_0^2\eta_0}{2t^{1-\delta}},
\end{align*}
where $(a)$ holds because $h \ge 0$, $(b)$ holds because of \eqref{bd_xt-yt} and the definition of $M_0$ in \eqref{M0}. 
Therefore, for all $t \ge 1$, we obtain that 
\begin{align}
    &\|c(x^{t+1}) - y^t\|^2 \le \frac{2}{\beta_t} \left(f(x^t)+ g(x^t)+ h(c(x^t)) -f(x^{t+1})  -g(x^{t+1}) \right)+ \frac{M_0^2\eta_0}{t^{1-\delta} \beta_t} \nonumber \\
    &\overset{(a)}{\le} \frac{M_3}{\beta_t}+ \frac{M_0^2\eta_0}{t^{1-\delta}\beta_t} \overset{(b)}{\le} \frac{M_3}{\alpha_0(t+1)^{\delta}}+ \frac{M_0^2\eta_0}{t^{1-\delta}\alpha_0(t+1)^{\delta}} \le \frac{M_3}{ \alpha_0(t+1)^{\delta}}+ \frac{2M_0^2\eta_0}{\alpha_0(t+1)}, \nonumber
    % \label{Ax_t+1 -u_t_2}
\end{align}
where $(a)$ holds because of the definition of $M_3$ in \eqref{c5}, $(b)$ holds because of Assumption~\ref{assumption_beta}(i), and the last inequality holds because $2t\ge t+1$ and $\delta\in (0,1)$. 
In addition, for all $t \ge 1$, we have 
\begin{align}
    &\|c(x^t) -y^t\|^2 \le   \| c(x^t) - y^t\|^2 + \frac{2}{\beta_{t-1}}h(y^t) \overset{(a)}{\le} \frac{2h(c(x^t))}{\beta_{t-1}} \overset{(b)}{\le} \frac{2M_3}{\beta_{t-1}}\le \frac{2M_3}{\alpha_0t^{\delta}}, \label{cxt-yt}
\end{align}
where the first inequality holds because $h$ is nonnegative, $(a)$ holds because of \eqref{get_uk_line}, $(b)$ holds because of the definition of $M_3$ in \eqref{c5} and the fact that $x^t \in \dom g$, and the last inequality holds because of Assumption~\ref{assumption_beta}(i). Combining the above two displays, we obtain \eqref{bounded_xk_uk}. 

Next, applying Lemma~\ref{lemma_H} with $y = y^{t-1}$, we obtain upon rearranging terms that for all $t \ge 1$,
\begin{align*}
    \frac{1}{2\mu_t}\|x^{t+1} -x^t\|^2 &\le H(x^t, \beta_{t-1}, y^{t-1}) - H(x^{t+1}, \beta_t, y^t) + \frac{\beta_t- \beta_{t-1}}{2} \|c(x^t) - y^{t}\|^2 \\
    & \le H(x^t, \beta_{t-1}, y^{t-1}) - H(x^{t+1}, \beta_t, y^t) + \frac{\beta_t -\beta_{t-1}}{\alpha_0t^{\delta}}M_3, 
\end{align*}
where the last inequality holds because of \eqref{cxt-yt}.
Summing both sides of the above display from $t =1$ to $T$, we have
\begin{align}
    &\sum_{t = 1}^T \frac{1}{2\mu_t}\|x^{t+1} -x^t\|^2 \le H(x^1, \beta_0, y^0)- H(x^{T+1}, \beta_T, y^T) + \sum_{t=1}^T\frac{\beta_t - \beta_{t-1}}{\alpha_0t^{\delta}}M_3 \nonumber \\
    & \overset{(a)}{\le} H(x^1, \beta_0, y^0) -\inf\{f + g\}  + \sum_{t=1}^T\frac{\beta_t - \beta_{t-1}}{\alpha_0t^{\delta}}M_3  \nonumber  \\
	& \overset{(b)}{\le}  M_1  + \sum_{t=1}^T\frac{\eta_0 t^{\delta-1}}{\alpha_0t^{\delta}}M_3 = M_1  + \frac{\eta_0}{\alpha_0}\sum_{t=1}^T \frac{ M_3}{t}  \le M_1+ \frac{\eta_0M_3}{\alpha_0} (\ln T+1) = M_1+ M_2(\ln T+1), \nonumber
\end{align}
where $(a)$ holds because $\frac{\beta_T}{2}\|c(x^{T+1}) - y^T\|^2\ge 0$ and $h(y^T) \ge 0$, $(b)$ holds because of Assumption~\ref{assumption_beta}(ii) and the definition of $M_1$ in \eqref{c4}, the last inequality holds because $x \mapsto \frac1x$ is decreasing, and we use the definition of $M_2$ in \eqref{c5} for the last equality.
Then, we have 
\begin{align}
    \frac{1}{T}\sum_{t=1}^T\frac{1}{\mu_t}\|x^{t+1} -x^t\|^2 \le\frac{2M_1+2M_2(\ln T+1)}{T}\le  \frac{4M_1+4M_2(\ln T+1)}{T+1}, \label{bd_mu_x_2}
\end{align}
where the last inequality holds because $2T\ge T+1$. The inequalities in \eqref{conclu_nonlip_2} now follow immediately from \eqref{bd_mu_x_2} and the observation that $\mu_{\max} \ge \mu_t$ for all $t\ge 1$.

Next, note that
\begin{align*}
    &\frac{1}{T}\sum_{t=1}^T \frac{1}{\mu_t^2}\|x^{t+1} -x^t\|^2  \le \frac{1}{T}\sum_{t=1}^T \left(\rho^{-1}L+ \rho^{-1}(L_cM_0+ M_c^2)\beta_t \right)\frac{1}{\mu_t}\|x^{t+1} -x^t\|^2  \nonumber \\
    &\le\frac{1}{T}\left(\rho^{-1}L+ \rho^{-1}(L_cM_0+ M_c^2)\beta_T \right)\sum_{t=1}^T\frac{1}{\mu_t}\|x^{t+1} -x^t\|^2 \nonumber \\
    &\le \left(\rho^{-1}L+ \rho^{-1}(L_cM_0+ M_c^2)\beta_T \right)\frac{4M_1+4M_2(\ln T+1)}{T+1} \nonumber \\
    &\le \left(\rho^{-1}L+ \rho^{-1}(L_cM_0+ M_c^2)\gamma_0(T+1)^{\delta} \right)\frac{4M_1+4M_2(\ln T+1)}{T+1}, %\label{dist_term_2}
\end{align*}
where the first inequality holds because of \eqref{bd_1/mu_t}, the second inequality holds because $\{\beta_t\}$ is nondecreasing, the third inequality follows from \eqref{bd_mu_x_2} and the last inequality holds because of Assumption~\ref{assumption_beta}(i).
This proves \eqref{conclu_nonlip_1}.
\end{proof}
Now, based on Proposition~\ref{theo_xk+1-xk_2}, we have the following results on complexity and subsequential convergence. %\todo[inline]{Explain the subtle $\ln$ factor}
\begin{theorem}[Iteration complexity and subsequential convergence]
    Suppose that Assumptions~\ref{assumption 1} and \ref{assumption 3} hold, and $\{\beta_t\}$ is chosen to satisfy Assumption~\ref{assumption_beta} with $\delta\in(0,1)$. Let $\{x^t\}$ and $\{y^t\}$ be generated by Algorithm~\ref{alg1}. Then, the following statements hold.
    \begin{enumerate}[\rm (i)]
            \item  It holds that for all $T\ge 1$,
            \begin{align}
                 &\!\!\!\!\!\! \frac{1}{T} \sum_{t=1}^T {\rm dist}^2\left(0, \nabla f(x^{t+1})+ \partial g(x^{t+1})+ J_c(x^t)^T\partial h(y^t)\right)  \nonumber \\
                 &\!\!\!\!\!\le \frac{32(\rho^{-1}L+1)+ 8\mu_{\max}L^2}{T+1}\Omega_T+\frac{32\rho^{-1}(L_cM_0+M_c^2)\gamma_0}{(T+1)^{1-\delta}}\Omega_T+ \frac{16\eta_0M_c^2M_2}{(1-\delta)(T+1)}, \label{aver_sub2} \\ 
                &\!\!\!\!\!\! \min\limits_{1\le t\le T} \!\big\{ {\rm dist}^2\big(0, \nabla f(x^{t+1}) \!+\! \partial g(x^{t+1}) \!+\! J_c(x^t)^T\partial h(y^t)\big) \!+\! \|x^{t+1}\!-\!x^t\|^2 \!+\! \|c(x^{t+1}) \!-\! y^t\|^2 \big\} \nonumber \\
                   &\!\!\!\!\!\le \frac{\lambda_2+ \lambda_3(\ln T+1)}{T+1}+ \frac{\lambda_4(M_1+M_2(\ln T+1))}{(T+1)^{1-\delta}}+ \frac{M_3}{\alpha_0(T+1)^{\delta}} + \frac{2M_0^2\eta_0}{\alpha_0(T+1)}, \label{min_sub2}
            \end{align}
        where
        \begin{align}
                &\Omega_T:= M_1+M_2(\ln T+1),  \nonumber \\
                & \lambda_2 := 32\rho^{-1}LM_1+32M_1+8\mu_{\max}M_1L^2+ \frac{16\eta_0M_c^2M_2}{(1-\delta)}+ 4\mu_{\max}M_1,  \label{lam_2} \\
                &\lambda_3 := 32\rho^{-1}LM_2+32M_2+8\mu_{\max}M_2L^2+ 4\mu_{\max}M_2, \label{lam_3} \\
                &\lambda_4 := 32\rho^{-1}(L_cM_0+M_c^2)\gamma_0, \label{lam_4}
            \end{align}
        and other constants are the same as those in Proposition~\ref{theo_xk+1-xk_2}.

        \item  
        Define $b_T := \frac{1}{T}\sum_{k=1}^T\|x^{k+1} - x^k\|^2$ for each $T \ge 1$. Then, for any subsequence $\{b_{T_k}\} \subseteq \{b_T\}$ satisfying $b_{T_{k}} \le b_{T_k-1}$ and $T_k >1$,\footnote{In view of Remark~\ref{construct_sub_sq}, we can construct the subsequence $\{b_{T_k}\}$.} and any accumulation point $x^*$ of $\{x^{T_k}\}$ such that \eqref{CQ} holds, we have
        \begin{equation}\label{optcon2}
            0 \in \nabla f(x^*)+ \partial g(x^*)+ J_c(x^*)^T\partial h(c(x^*)).
        \end{equation}
    \end{enumerate}
\end{theorem}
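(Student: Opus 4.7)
The plan is to mirror the structure of Theorem \ref{Thm:alg1}, with the non-Lipschitz nature of $h$ forcing the bound on $\|c(x^t) - y^t\|$ to come from \eqref{bounded_xk_uk} rather than from boundedness of $\partial h(y^t)$. Part (i) rests on extracting a quantitative element from the optimality conditions \eqref{yh_optcon} and then averaging via Proposition \ref{theo_xk+1-xk_2}; part (ii) combines the subsequence-from-average device of Remark \ref{construct_sub_sq} with Theorem \ref{convergence}.

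For (i), the second relation in \eqref{yh_optcon} yields $\beta_{t-1}J_c(x^t)^T(c(x^t) - y^t) \in J_c(x^t)^T\partial h(y^t)$. Adding this to a rearrangement of the first relation in \eqref{yh_optcon} (and adding/subtracting $\nabla f(x^t)$) exhibits an element of $\nabla f(x^{t+1}) + \partial g(x^{t+1}) + J_c(x^t)^T\partial h(y^t)$ whose norm is at most $L\|x^{t+1}-x^t\| + (\beta_t - \beta_{t-1})M_c\|c(x^t) - y^t\| + \tfrac{2}{\mu_t}\|x^{t+1}-x^t\|$. Using Assumption \ref{assumption_beta}(ii) for $\beta_t - \beta_{t-1}$ and the pointwise estimate $\|c(x^t) - y^t\|^2 \le 2M_3/(\alpha_0 t^\delta)$ from \eqref{bounded_xk_uk}, one has $(\beta_t - \beta_{t-1})^2\|c(x^t) - y^t\|^2 \le 2M_3\eta_0^2\,t^{\delta - 2}/\alpha_0$, which sums to $O(1/(1-\delta))$ over $t \ge 1$. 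Squaring the pointwise distance with $(a+b+c)^2 \le 3(a^2+b^2+c^2)$, summing from $t=1$ to $T$, and invoking \eqref{conclu_nonlip_1} and \eqref{conclu_nonlip_2} of Proposition \ref{theo_xk+1-xk_2} gives \eqref{aver_sub2}. The bound \eqref{min_sub2} then follows by $\min_{1\le t\le T}\Xi_t \le \tfrac{1}{T}\sum_{t=1}^T \Xi_t$ with $\Xi_t$ the three-term sum inside the min, using \eqref{aver_sub2} for the first term, \eqref{conclu_nonlip_2} for the $\|x^{t+1}-x^t\|^2$ term, and \eqref{bounded_xk_uk} for $\|c(x^{t+1}) - y^t\|^2$.

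For (ii), I follow Theorem \ref{Thm:alg1}(ii) almost verbatim. Given the subsequence $\{T_k\}$ as in the theorem, Remark \ref{construct_sub_sq} applied with $a_t = \|x^{t+1}-x^t\|^2$ gives $\|x^{T_k+1} - x^{T_k}\|^2 \le b_{T_k - 1}$, which vanishes by \eqref{conclu_nonlip_2}. Applying Remark \ref{construct_sub_sq} once more with $a_t := \mu_t^{-2}\|x^{t+1}-x^t\|^2$ and using \eqref{conclu_nonlip_1} delivers $\mu_{T_k}^{-1}\|x^{T_k+1}-x^{T_k}\| \to 0$. Since $\dom g$ is compact (Assumption \ref{assumption 3}), any accumulation point $x^*$ of $\{x^{T_k}\}$ is the limit of some sub-subsequence $x^{T_{k_i}} \to x^*$; the second estimate in \eqref{bounded_xk_uk} forces $\|c(x^{T_{k_i}}) - y^{T_{k_i}}\|^2 \to 0$, so by continuity of $c$ we get $y^{T_{k_i}} \to c(x^*) =: y^*$. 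With $c(x^*) = y^*$ and the CQ \eqref{CQ} holding at $x^*$ by hypothesis, Theorem \ref{convergence} yields the stationarity \eqref{optcon2}.

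The main obstacle I foresee is pinning down the exact form of \eqref{min_sub2}: the last two summands $\frac{M_3}{\alpha_0(T+1)^\delta}$ and $\frac{2M_0^2\eta_0}{\alpha_0(T+1)}$ exactly match the pointwise bound of \eqref{bounded_xk_uk} at $t = T$, rather than the $(1-\delta)^{-1}$-scaled temporal averages one would obtain from straightforward $\min \le$ average. This suggests the intended decomposition blends an average of $A_t + B_t$ with a pointwise estimate of $C_T$, and carefully tracking which $(1-\delta)^{-1}$ factors land inside $\lambda_2$ versus which remain in the last two terms is the bookkeeping-heavy piece of the argument. A secondary subtlety, inherited from Theorem \ref{Thm:alg1}(ii), is that the $\{T_k\}$ in the theorem statement is built from $b_T$ corresponding to $a_t = \|x^{t+1}-x^t\|^2$; the auxiliary invocation of Remark \ref{construct_sub_sq} with $a_t = \mu_t^{-2}\|x^{t+1}-x^t\|^2$ is used to simultaneously secure the vanishing scaled-successive-change hypothesis needed for Theorem \ref{convergence}.
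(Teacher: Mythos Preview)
Your proposal matches the paper's argument. For \eqref{aver_sub2} the only difference is cosmetic: you use $(a+b+c)^2\le 3(a^2+b^2+c^2)$, whereas the paper groups $(L+\tfrac{2}{\mu_t})\|x^{t+1}-x^t\|$ as a single term, applies $(u+v)^2\le 2(u^2+v^2)$, and then expands $(L+\tfrac{2}{\mu_t})^2$ before invoking \eqref{conclu_nonlip_1}--\eqref{conclu_nonlip_2}; this only perturbs constants. For \eqref{min_sub2} the paper averages only ${\rm dist}^2(\cdots)+\|x^{t+1}-x^t\|^2$, obtains $\tfrac{\lambda_2+\lambda_3(\ln T+1)}{T+1}+\tfrac{\lambda_4\Omega_T}{(T+1)^{1-\delta}}$, and then writes ``This together with \eqref{bounded_xk_uk} proves \eqref{min_sub2}''---exactly the junction you flagged as the main obstacle. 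Your alternative of averaging all three summands is sound and delivers the correct asymptotic orders (at the cost of an extra $(1-\delta)^{-1}$ on the $M_3$ term and a logarithm on the $M_0^2\eta_0$ term), which is harmless for the $\tilde{\mathcal{O}}(\epsilon^{-4})$ conclusion. Part (ii) is identical to the paper's proof, including the step you singled out of invoking Remark~\ref{construct_sub_sq} with $a_t=\mu_t^{-2}\|x^{t+1}-x^t\|^2$ along the subsequence already fixed from $a_t=\|x^{t+1}-x^t\|^2$; the paper treats this just as tersely as you do.
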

\begin{proof}
Using \eqref{yh_optcon}, we see that for each $t\ge 1$,
\begin{align}
    &{\rm dist}(0, \nabla f(x^{t+1})+ \partial g(x^{t+1})+ J_c(x^t)^T\partial h(y^t)) \nonumber \\
    &\le \left\|-\frac{2}{\mu_t}(x^{t+1} - x^t)- \left( \beta_t -\beta_{t-1} \right)J_c(x^t)^T(c(x^t) -y^t) +\nabla f(x^{t+1}) -\nabla f(x^t) \right\|      \nonumber\\
    &\le \frac{2}{\mu_t}\|x^{t+1} -x^t\| + (\beta_t -\beta_{t-1})\|J_c(x^t)^T(c(x^t) -y^t)\| + \| \nabla f(x^{t+1}) -\nabla f(x^t) \| \nonumber \\
    &\overset{(a)}{\le} \frac{2}{\mu_t}\|x^{t+1}-x^t\|+\eta_0t^{\delta -1}M_c\|c(x^t) -y^t\| + L\|x^{t+1} -x^t\|  \nonumber \\
    &= \left( \frac{2}{\mu_t}+ L \right)  \|x^{t+1} -x^t\|+\eta_0t^{\delta -1}M_c\|c(x^t) -y^t\|,  \label{sum_dist_nlp}
\end{align}
where $(a)$ holds because of Assumption~\ref{assumption_beta}(ii) and Assumption~\ref{assumption 1}(i) and (ii).
Notice that
\begin{align}
    &\frac{1}{T}\sum_{t=1}^T \frac{t^{2\delta-2}}{\beta_{t-1}}\le \frac{1}{T} \sum_{t=1}^T \frac{1}{\alpha_0 t^{2-\delta}} = \frac1{\alpha_0T}\left(1 + \sum_{t=2}^T t^{\delta-2}\right)
    \le \frac1{\alpha_0T}\left(1 + \sum_{t=2}^T \int_{t-1}^t s^{\delta-2}ds\right)\nonumber\\
    &
    = \frac1{\alpha_0T}\left(1 + \frac{1-T^{\delta-1}}{1-\delta}\right) \le \frac1{\alpha_0T}\left(1 + \frac{1}{1-\delta}\right)\le \frac2{\alpha_0T(1-\delta)} \le \frac{4}{\alpha_0(1-\delta)(T+1)}, \label{sum_t/bt-1}
\end{align}
where the first inequality follows from Assumption~\ref{assumption_beta}(i), the second inequality holds because $x \mapsto x^{\delta-2}$ is decreasing for $x >0$ since $\delta \in (0,1)$, and the last inequality holds because $2T \ge T+1$.
Hence, from the above display, we can now deduce from \eqref{sum_dist_nlp} that
\begin{align*}
    &\frac{1}{T}\sum_{t=1}^T {\rm dist}^2\left(0, \nabla f(x^{t+1}) + \partial g(x^{t+1})+ J_c(x^t)^T\partial h(c(y^t))\right)\nonumber \\
    &\le \frac{1}{T}\sum_{t=1}^T \left(\left( \frac{2}{\mu_t}+ L \right) \|x^{t+1} -x^t\| + \eta_0t^{\delta -1}M_c\|c(x^t) -y^t\|\right)^2  \nonumber\\
    &\le  \frac{2}{T}\sum_{t=1}^T \left(\frac{2}{\mu_t}+L \right)^2\|x^{t+1} -x^t\|^2 + \frac{2}{T} \sum_{t=1}^T \eta_0^2t^{2\delta -2}M_c^2\|c(x^t) -y^t\|^2\nonumber \\
    & \overset{(a)}{\le} \frac{2}{T}\sum_{t=1}^T \left(\frac{2}{\mu_t}+L \right)^2\|x^{t+1} -x^t\|^2+ \frac{1}{T}\sum_{t=1}^T\frac{4\eta_0^2t^{2\delta -2}M_c^2M_3}{\beta_{t-1}}   \nonumber \\
    &\overset{(b)}{\le} \frac{2}{T}\sum_{t=1}^T \left(\frac{4}{\mu_t^2}+ \frac{4}{\mu_t}+ L^2 \right)\|x^{t+1} -x^t\|^2+\frac{16\eta_0M_c^2M_2}{(1-\delta)(T+1)}, %\label{dist_nonlip}
\end{align*}
where $(a)$ holds because $\|c(x^t) -y^t\|^2 \le \frac{2M_3}{\beta_{t-1}}$ (see \eqref{bounded_xk_uk}), $(b)$ follows from \eqref{sum_t/bt-1} and the definition of $M_2$ in \eqref{c5}. This result together with Proposition~\ref{theo_xk+1-xk_2} proves \eqref{aver_sub2}.

Therefore, it holds that 
\begin{align}
    &\min_{1\le t\le T} \left\{ {\rm dist}^2\left(0, \nabla f(x^{t+1}) + \partial g(x^{t+1})+ J_c(x^t)^T\partial h(c(y^t))\right) + \|x^{t+1} -x^t\|^2 \right\} \nonumber \\
    &\le \frac{1}{T}\sum_{t=1}^T {\rm dist}^2\left(0, \nabla f(x^{t+1}) + \partial g(x^{t+1})+ J_c(x^t)^T\partial h(c(y^t))\right)+ \frac{1}{T}\sum_{t=1}^T\|x^{t+1} -x^t\|^2 \nonumber \\
    &\overset{(a)}{\le} \frac{32\rho^{-1}L(M_1+M_2(\ln T+1))}{T+1}+ \frac{32\rho^{-1}(L_cM_0+M_c^2)\gamma_0(M_1+M_2(\ln T+1))}{(T+1)^{1-\delta}} \nonumber \\
    &\ \ \ \ + \frac{32M_1+32M_2(\ln T+1)}{T+1}+ \frac{8\mu_{\max}M_1L^2+8\mu_{\max}M_2L^2(\ln T+1)}{T+1} +\frac{16\eta_0M_c^2M_2}{(1-\delta)(T+1)} \nonumber\\
    &\ \ \ \ + \frac{4\mu_{\max}M_1+4\mu_{\max}M_2(\ln T+1)}{T+1} \nonumber \\
    &=\frac{\lambda_2+ \lambda_3(\ln T+1)}{T+1}+ \frac{\lambda_4(M_1+M_2(\ln T+1))}{(T+1)^{1-\delta}},  \nonumber
\end{align}
where $(a)$ holds because of \eqref{aver_sub2} and Proposition~\ref{theo_xk+1-xk_2}, and the last equality follows from the definitions of $\lambda_2$, $\lambda_3$ and $\lambda_4$ in \eqref{lam_2}, \eqref{lam_3} and \eqref{lam_4}, respectively.
This together with \eqref{bounded_xk_uk} proves \eqref{min_sub2} and hence establishes item (i).

We now prove item (ii). We first fix a subsequence $\{b_{T_k}\} \subseteq \{b_T\}$ satisfying $b_{T_{k}} \le b_{T_k-1}$ and $T_k >1$ and an accumulation point $x^*$ of $\{x^{T_k}\}$ such that \eqref{CQ} holds. In view of Remark~\ref{construct_sub_sq} (where we let $a_t := \mu_t^{-2}\|x^{t+1}-x^t\|^2$) and \eqref{conclu_nonlip_1}, we have
\begin{align}
    & \frac{\rho}{4\mu_{T_k}^2} \|x^{T_k+1} -x^{T_k}\|^2 \le \frac{L(M_1+M_2(\ln T_k+1))}{T_k}+ \frac{(L_cM_0+M_c^2)\gamma_0(M_1+M_2(\ln T_k+1))}{T_k^{1-\delta}}. \nonumber
\end{align}
In addition, in view of the boundedness of $\dom g$ and \eqref{bd_xt-yt}, by passing to a further subsequence if necessary, we may assume that $(x^{T_k},y^{T_k})\to (x^*,y^*)$ for some $y^*$. Then, in view of \eqref{bounded_xk_uk}, we have
\begin{align}
    c(x^*) = y^*. \nonumber
\end{align}
Combining the above two displays with the assumption that \eqref{CQ} holds at $x^*$, we see from Theorem~\ref{convergence} that \eqref{optcon2} holds.
\end{proof}

\begin{remark}
  In view of \eqref{min_sub2}, if we choose $\delta=0.5$, the iteration complexity for obtaining an $(\epsilon,\epsilon,\epsilon)$-stationary point is $\tilde{\cal O}(\epsilon^{-4})$. Note that due to the presence of the factor $1-\delta$ in the denominator of the last term in \eqref{aver_sub2}, the techniques in Remark~\ref{delta_lip} cannot be extended to provide insight on an optimal choice of $\delta$, because $\sup_{\delta\in (0,1)} (1-\delta)^{-\delta} = \infty$.
\end{remark}

\subsection{Convergence analysis with bounded convex domains} \label{sec:43}

In this subsection, we consider the following assumption, which allows $\dom h \neq \mathbb{R}^m$.
\begin{assumption} \label{assumption 2}
   In \eqref{problem}, $\dom g$ is compact and $\dom h$ is closed and convex. Moreover, all stationary points of 
   \begin{equation}\label{pro_for_proof}
    \min\limits_{x\in \dom g} \frac{1}{2}{\rm dist}(c(x), \dom h)^2
\end{equation}
are global minimizers.\footnote{Recall that an $x^*$ is a stationary point of \eqref{pro_for_proof} if $0 \in J_c(x^*)^T(c(x^*) - {\rm Proj}_{\dom h}(c(x^*))) + \partial \delta_{\dom g}(x^*)$.}
\end{assumption}
Compared with Assumption~\ref{assumption 3}, Assumption~\ref{assumption 2} allows $h$ with $\dom h\neq \mathbb{R}^m$, but requires further assumptions on the structure of \eqref{pro_for_proof}. Notice that when $\dom h = \mathbb{R}^n$, the objective of \eqref{pro_for_proof} becomes identically zero and hence all stationary points are trivially globally optimal; this latter condition is also implied by each of the PL-type conditions stated in \cite[Assumption~4]{deng2025single}, \cite[Assumption~1(iv)]{LuMeiXiao24} and \cite[Eq.~(A5)]{Alacaoglu24a}, respectively. The next proposition presents other sufficient conditions for all stationary points of \eqref{pro_for_proof} to be globally optimal.
%\todo[inline]{TK: Add in other conditions such as PL to cover \cite{deng2025single}.}
\begin{proposition}[{The desired property of \eqref{pro_for_proof}}]\label{prop_haha}
  Suppose that Assumption~\ref{assumption 1} holds, $\dom g$ and $\dom h$ are convex and $c$ is $-(\dom h)^\infty$-convex. Then every stationary point of \eqref{pro_for_proof} is a global minimizer.
\end{proposition}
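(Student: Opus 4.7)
The plan is to show that, under the stated assumptions, the objective $\phi(x) := \tfrac{1}{2}\mathrm{dist}(c(x),\dom h)^2$ is actually a continuously differentiable \emph{convex} function on $\dom g$, so that the stationarity condition stated in the footnote becomes the optimality condition for a convex program over the convex set $\dom g$. Combined with the feasibility condition $\dom g\cap c^{-1}(\dom h)\neq\emptyset$ from Assumption~\ref{assumption 1}(iii), which guarantees that the minimum value of $\phi$ on $\dom g$ is zero, this will force every stationary point $x^*$ to satisfy $\phi(x^*)=0$, i.e., $c(x^*)\in\dom h$, hence to be a global minimizer.

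The key step is the convexity of $\phi$ on $\dom g$. First, I would recall that $d(y) := \tfrac{1}{2}\mathrm{dist}(y,\dom h)^2$ is convex and $C^1$ on $\mathbb{R}^m$, with $\nabla d(y) = y - \mathrm{Proj}_{\dom h}(y)$, since $\dom h$ is closed and convex. Next, I would establish the monotonicity fact that $d(y + v) \le d(y)$ for every $y\in\mathbb{R}^m$ and every $v\in (\dom h)^\infty$: letting $p := \mathrm{Proj}_{\dom h}(y)$, convexity and closedness of $\dom h$ give $p+v\in\dom h$, so $d(y+v)\le \tfrac{1}{2}\|(y+v)-(p+v)\|^2 = d(y)$. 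Using the $-(\dom h)^\infty$-convexity of $c$, for any $x,z\in\dom g$ and $\lambda\in[0,1]$ we can write
\begin{equation*}
    c(\lambda x + (1-\lambda)z) = \lambda c(x) + (1-\lambda)c(z) + v,\qquad v\in (\dom h)^\infty,
\end{equation*}
and therefore
\begin{equation*}
    \phi(\lambda x+(1-\lambda)z) = d\bigl(\lambda c(x)+(1-\lambda)c(z)+v\bigr) \le d\bigl(\lambda c(x)+(1-\lambda)c(z)\bigr) \le \lambda\phi(x)+(1-\lambda)\phi(z),
\end{equation*}
where the last inequality is the convexity of $d$. Since $\dom g$ is convex, this proves that $\phi$ is convex on $\dom g$.

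Once convexity is in hand, I would invoke the standard subgradient inequality: for $x^*,x\in\dom g$, taking $\lambda\downarrow 0$ in $\phi(x^* + \lambda(x-x^*))\le\phi(x^*) + \lambda(\phi(x)-\phi(x^*))$ yields $\phi(x)\ge \phi(x^*) + \langle\nabla\phi(x^*),x-x^*\rangle$, where $\nabla\phi(x^*) = J_c(x^*)^T\bigl(c(x^*)-\mathrm{Proj}_{\dom h}(c(x^*))\bigr)$ by the chain rule. The stationarity condition from the footnote rewrites as $-\nabla\phi(x^*)\in\partial\delta_{\dom g}(x^*) = N_{\dom g}(x^*)$ (using convexity of $\dom g$), which gives $\langle\nabla\phi(x^*),x-x^*\rangle\ge 0$ for all $x\in\dom g$. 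Combining, $\phi(x)\ge \phi(x^*)$ for all $x\in\dom g$. Finally, Assumption~\ref{assumption 1}(iii) furnishes some $\bar x\in\dom g$ with $c(\bar x)\in\dom h$, so $\phi(\bar x)=0$; hence $\phi(x^*)=0$, meaning $c(x^*)\in\dom h$ and $x^*$ is a global minimizer of~\eqref{pro_for_proof}.

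The main conceptual obstacle is the identification of the right monotonicity property of $d$ along the horizon cone, which is the bridge that converts the cone-convexity of $c$ into ordinary convexity of $\phi$; once that little lemma is in place, everything else is routine first-order convex analysis. A minor care point is that $\phi$ need not be convex on all of $\mathbb{R}^n$, only on $\dom g$, but this is harmless since the subgradient inequality only needs to be evaluated between points of $\dom g$.
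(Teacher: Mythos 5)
Your proposal is correct. It follows the same overall strategy as the paper --- reduce the claim to showing that \eqref{pro_for_proof} is a convex optimization problem --- but it differs in how the convexity of $x\mapsto \tfrac12{\rm dist}^2(c(x),\dom h)$ is established. The paper first identifies ${\rm hzn}\,\bigl(\tfrac12{\rm dist}^2(\cdot,\dom h)\bigr) = (\dom h)^\infty$ via \cite[Theorem~8.7]{Rockafellar+1970} and then invokes the general composition theorem \cite[Theorem~1]{burke2021study} as a black box; it also leaves implicit the final step that stationary points of a convex problem are global minimizers. You instead prove the key convexity claim from scratch: your monotonicity lemma $d(y+v)\le d(y)$ for $v\in(\dom h)^\infty$ is precisely the statement that $(\dom h)^\infty$ is contained in the horizon cone of $d$, and your two-line chain of inequalities is the special case of the Burke--Hoheisel--Nguyen composition rule needed here. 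You then carry out the first-order argument (gradient of the squared distance, chain rule, normal cone of the convex set $\dom g$) explicitly rather than appealing to the folklore ``convex stationary $\Rightarrow$ global''. The paper's route is shorter and leverages existing machinery; yours is self-contained, elementary, and makes transparent exactly where each hypothesis ($\dom h$ closed convex, $c$ being $-(\dom h)^\infty$-convex, $\dom g$ convex) enters. Both are valid; your observation that $\phi$ need only be convex on $\dom g$ (not on all of $\mathbb{R}^n$) and your extra remark that the optimal value is zero are correct but not needed for the stated conclusion.
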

\begin{proof}
  It suffices to show that \eqref{pro_for_proof} is a convex optimization problem.  To see this, we first note that
\[
    {\rm hzn}~ \left(\frac{1}{2}{\rm dist}^2(\cdot, \dom h) \right) = \left(\{x:\;{\rm dist}^2(x, \dom h)\le0\}\right)^{\infty} = ( \dom h)^{\infty},
\]
where the first equality follows from \cite[Theorem~8.7]{Rockafellar+1970} and the second equality holds because $\dom h$ is closed (see Assumption~\ref{assumption 1}(iii)).
Hence, using \cite[Theorem~1]{burke2021study} and the assumptions that $\dom h$ is convex and $c$ is $-(\dom h)^\infty$-convex, we deduce that $\frac{1}{2}{\rm dist}^2(c(\cdot), \dom h)$ is a convex function. Since $\dom g$ is also convex by assumption, we see that \eqref{pro_for_proof} is a convex optimization problem.
\end{proof}

We now present an example that satisfies both Assumptions~\ref{assumption 1} and \ref{assumption 2}. 
%\todo[inline]{Hao: Change $c_i$}
\begin{example}\label{QCQP}
    Consider a variant of the penalized quadratically constrained quadratic program (QCQP) from \cite{boob2025level} with $\ell_p$ penalty function, $p \in (0,1)$, which is shown as follows:
    \begin{equation} \label{QCQP_pro}
        \begin{array}{cl}
             \min\limits_{x\in \mathbb{R}^n}& \frac{1}{2} x^TQ_0x+ b_0^Tx+ \alpha \|x\|^p_p  \\
             {\rm s.t.}& \frac{1}{2} x^TQ_ix+ b_i^Tx+ r_i \le 0, i=1,2,\cdots, m, \\
                       & \|x\|_\infty \le r,
        \end{array}
    \end{equation}
    where each $Q_i$, $i=0,\ldots,m$, is an $n\times n$ matrix, $b_i\in \mathbb{R}^n$ and $r_i<0$ for all $i$, $\alpha>0$, $r>0$, and each $Q_i$, $i = 1,\ldots,m$, is positive semidefinite. Notice that this is an instance of \eqref{problem} satisfying Assumptions~\ref{assumption 1} and \ref{assumption 2}.  Specifically, one can take $f(x) := \frac{1}{2}x^TQ_0x+ b_0^Tx$, $g(x) := \alpha \| x\|_p^p+ \delta_{\|\cdot\|_\infty \le r}(x)$, $c_i(x) := \frac{1}{2}x^TQ_ix+ b_i^Tx+ r_i$ for $i=1, \ldots, m$ and $h(y):= \delta_{\mathbb{R}_-^m}(y)$. One can see that the proximal mapping of $\gamma g$ (for any $\gamma > 0$) can be computed efficiently (see, e.g., \cite{GZLHY13}). Moreover, $\dom g$ and $\dom h$ are convex. In addition, since $\dom h = \mathbb{R}^m_-$ and each $c_i$ is convex, we see that $c$ is $-(\dom h)^\infty$-convex as well. Thus, every stationary point of \eqref{pro_for_proof} is a global minimizer in view of Proposition~\ref{prop_haha}.
\end{example}
We will derive the iteration complexity for Algorithm~\ref{alg1} (with $\delta\in (0,1/2)$) to generate an $(\epsilon_1, M_0,\epsilon_3)$-stationary point, where $M_0$ was given in \eqref{M0}. Note that this does not imply any vanishing bounds on $\{\|c(x^t) - y^t\|\}$. Nevertheless, we show how to construct a subsequence $\{x^{T_k}\}$ such that, together with the corresponding $\{y^{T_k}\}$, we have $\lim_{k \rightarrow \infty} \|c(x^{T_k}) - y^{T_k}\| =0$, and we also establish subsequential convergence along this subsequence to a stationary point, under a standard constraint qualification. 

We start with the following proposition.

\begin{proposition}\label{theo_xk+1-xk}
    Suppose that Assumptions~\ref{assumption 1} and \ref{assumption 2} hold, and $\{\beta_t\}$ is chosen to satisfy Assumption~\ref{assumption_beta} with $\delta\in (0,\frac12)$. Let $\{x^t\}$, $\{y^t\}$ and $\{\mu_t\}$ be generated by Algorithm~\ref{alg1}. 
    Then the following inequalities hold for all $T\ge 1$.
            \begin{align}
            &\frac{1}{T} \sum_{t=1}^T \frac{1}{\mu_t^2}\|x^{t+1} -x^t\|^2  \le  \frac{4\lambda_5M_1}{(T+1)^{1-\delta}}+ \frac{4\lambda_5M_0^2\gamma_0}{(T+1)^{1-2\delta}}, \label{conclu3_1} \\
            &\frac{1}{T}\sum_{t=1}^T\frac{1}{\mu_t}\|x^{t+1} -x^t\|^2 \le   \frac{4M_1}{T+1}+ \frac{4M_0^2\gamma_0}{(T+1)^{1-\delta}}, \label{conclu3_2}  \\
            &\frac{1}{T}\sum_{t=1}^T \|x^{t+1} -x^t\|^2 \le \frac{4\mu_{\max}M_1}{T+1}+ \frac{4\mu_{\max}M_0^2\gamma_0 }{(T+1)^{1-\delta}}, \label{conclu3_3}
        \end{align}
         where 
        \begin{align}
           \lambda_5 := \rho^{-1}L+ \rho^{-1}(L_cM_0+ M_c^2)\gamma_0, \label{lam_5} 
        \end{align}
        $M_0$ is defined as in \eqref{M0}, $M_1$ is defined as in \eqref{c4}, $L$, $L_c$, $M_c$ are defined in Assumption~\ref{assumption 1}, $\delta$ and $\gamma_0$ are defined in Assumption~\ref{assumption_beta}, and $\rho$ and $\mu_{\max}$ are specified in Algorithm~\ref{alg1}.
\end{proposition}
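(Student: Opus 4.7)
The plan is to follow the same telescoping strategy used in Propositions~4.1 and~4.3, with the key observation that under only Assumption~\ref{assumption 2} (i.e., without Lipschitz continuity of $h$ or $\dom h = \mathbb{R}^m$), the perturbation term $\tfrac{\beta_t-\beta_{t-1}}{2}\|c(x^t)-y^t\|^2$ arising from Lemma~\ref{lemma_H} can no longer be absorbed via an extra Moreau-type potential or bounded using $h$, but it can still be controlled by the uniform bound $\|c(x^t)-y^t\|^2 \le M_0^2$ already established in \eqref{bd_xt-yt}. This forces the loss of a factor $\beta_T = {\cal O}((T+1)^\delta)$ compared with \eqref{conclusion_lip_2} (Lipschitz case), explaining the slower rate in \eqref{conclu3_2}.

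Concretely, I would first apply Lemma~\ref{lemma_H} with $y=y^{t-1}$ and telescope from $t=1$ to $T$ to obtain
\begin{align*}
\sum_{t=1}^T \frac{1}{2\mu_t}\|x^{t+1}-x^t\|^2
&\le H(x^1,\beta_0,y^0) - H(x^{T+1},\beta_T,y^T) + \sum_{t=1}^T \frac{\beta_t-\beta_{t-1}}{2}\|c(x^t)-y^t\|^2.
\end{align*}
Since $h\ge 0$ and $\tfrac{\beta_T}{2}\|c(x^{T+1})-y^T\|^2\ge 0$, we have $H(x^{T+1},\beta_T,y^T)\ge \inf\{f+g\}$, so the first difference is at most $M_1$ (see \eqref{c4}). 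For the second sum, the definition of $M_0$ in \eqref{M0} together with \eqref{bd_xt-yt} yields $\|c(x^t)-y^t\|^2\le M_0^2$ for all $t\ge 1$, so telescoping gives
\[
\sum_{t=1}^T \frac{\beta_t-\beta_{t-1}}{2}\|c(x^t)-y^t\|^2 \le \frac{M_0^2}{2}(\beta_T-\beta_0) \le \frac{M_0^2\gamma_0(T+1)^\delta}{2},
\]
where the last step uses Assumption~\ref{assumption_beta}(i). Combining these two and using $T\ge (T+1)/2$ for $T\ge 1$ yields \eqref{conclu3_2}, up to the constant factor stated.

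For \eqref{conclu3_1}, I would use the bound \eqref{bd_1/mu_t} together with monotonicity of $\{\beta_t\}$ and Assumption~\ref{assumption_beta}(i) to deduce
\[
\mu_t^{-1} \le \rho^{-1}L + \rho^{-1}(L_cM_0+M_c^2)\beta_T \le \lambda_5(T+1)^\delta \qquad \forall\,1\le t\le T,
\]
where the last inequality uses $(T+1)^\delta\ge 1$ and the definition of $\lambda_5$ in \eqref{lam_5}. Then
\[
\frac{1}{T}\sum_{t=1}^T \frac{1}{\mu_t^2}\|x^{t+1}-x^t\|^2 \le \lambda_5(T+1)^\delta \cdot \frac{1}{T}\sum_{t=1}^T \frac{1}{\mu_t}\|x^{t+1}-x^t\|^2,
\]
and plugging in \eqref{conclu3_2} gives \eqref{conclu3_1}. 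Finally, \eqref{conclu3_3} follows immediately from \eqref{conclu3_2} and $\mu_t\le \mu_{\max}$.

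There is no real obstacle here beyond the one conceptual choice: since $h$ may be non-Lipschitz and $\dom h$ may be a proper subset of $\mathbb{R}^m$, neither the trick in \eqref{Whoa} (which used $\beta_{t-1}(c(x^t)-y^t)\in\partial h(y^t)$ together with a boundedness of $\partial h$) nor the trick in Proposition~\ref{theo_xk+1-xk_2} (applying Lemma~\ref{lemma_H} with $y=c(x^t)$, which requires $c(x^t)\in\dom h$) is available, so one must accept the uniform bound $\|c(x^t)-y^t\|^2\le M_0^2$ and pay a $(T+1)^\delta$ factor. The restriction $\delta\in(0,1/2)$ does not enter this proof, but is needed so that the exponent $1-2\delta$ in \eqref{conclu3_1} remains positive, which will be essential for the subsequent convergence analysis.
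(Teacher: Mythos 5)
Your proposal is correct and follows essentially the same route as the paper: telescoping Lemma~\ref{lemma_H} with $y=y^{t-1}$, controlling the perturbation term via the uniform bound $\|c(x^t)-y^t\|^2\le M_0^2$ from \eqref{bd_xt-yt} and \eqref{M0}, and then upgrading to \eqref{conclu3_1} via \eqref{bd_1/mu_t} and $\beta_t\le\gamma_0(T+1)^\delta$. The only (immaterial) differences are that you keep the factor $\tfrac12$ in the telescoped perturbation sum, yielding a slightly tighter intermediate constant, and your closing remark about where $\delta<\tfrac12$ is actually needed matches the paper, which likewise does not use it in this proof.
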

\begin{proof}
Applying Lemma~\ref{lemma_H} with $y = y^{t-1}$, we have that for all $t\ge 1$,
\begin{align*}
     &H(x^{t+1}, \beta_t,y^t) \le H(x^t, \beta_{t-1},y^{t-1}) -\frac{1}{2\mu_t}\|x^{t+1} - x^t\|^2+ \frac{\beta_t - \beta_{t-1}}{2}\|c(x^t) - y^t\|^2 \\
    &\le H(x^t, \beta_{t-1},y^{t-1}) -\frac{1}{2\mu_t}\|x^{t+1} - x^t\|^2 + \left(\beta_t - \beta_{t-1} \right)M_0^2, \nonumber 
\end{align*}
where the last inequality holds because of \eqref{bd_xt-yt} and the definition of $M_0$ in \eqref{M0}.
Summing both sides of the above display from $t =1$ to $T$, we obtain upon rearranging terms that
\begin{align}
    &\sum_{t = 1}^{T}\frac{1}{2\mu_t}\|x^{t+1} -x^t\|^2 \le   \sum_{t=1}^{T} \left[ H(x^t, \beta_{t-1}, y^{t-1	}) - H(x^{t+1}, \beta_t, y^t)\right] + \sum_{t=1}^{T} (\beta_{t} - \beta_{t-1})M_0^2, \nonumber \\
    & = H(x^1, \beta_0, y^0) - H(x^{T+1}, \beta_T, y^T) + (\beta_T-\beta_0)M_0^2
    \overset{(a)}\le H(x^1, \beta_0, y^0) - \inf\{f+g\} + \beta_TM_0^2, \nonumber
\end{align}
where $(a)$ holds because $\frac{\beta_T}{2}\|c(x^{T+1}) -y^T\|^2\ge0$, $h(y^T)\ge 0$ and $\beta_0 >0$.
Then, we have 
\begin{align}
    &\frac{1}{T}\sum_{t=1}^T \frac{1}{\mu_t}\|x^{t+1} -x^t\|^2   \nonumber \\
    &\le \frac{2M_1}{T}+ \frac{2M_0^2\gamma_0(T+1)^{\delta}}{T} \le \frac{4M_1}{T+1}+ \frac{4M_0^2\gamma_0(T+1)^{\delta}}{T+1} = \frac{4M_1}{T+1}+ \frac{4M_0^2\gamma_0}{(T+1)^{1-\delta}}, \label{sum_nlip_1}
\end{align}
where the first inequality holds because of Assumption~\ref{assumption_beta}(i) and the second inequality holds because $2T\ge T+1$. This proves \eqref{conclu3_2}.
The inequality \eqref{conclu3_3} follows immediately from \eqref{conclu3_2} upon noting that $\mu_t\le \mu_{\max}$ for all $t$.

Next, observe that 
\begin{align*}
    &\frac{1}{T}\sum_{t=1}^T \frac{1}{\mu_t^2}\|x^{t+1} -x^t\|^2  \overset{(a)}{\le} \frac{1}{T}\sum_{t=1}^T \left(\rho^{-1}L+ \rho^{-1}(L_cM_0+ M_c^2)\beta_t \right)\frac{1}{\mu_t}\|x^{t+1} -x^t\|^2  \nonumber \\
    &\overset{(b)}{\le}\frac{1}{T}\left(\rho^{-1}L+ \rho^{-1}(L_cM_0+ M_c^2)\beta_T \right)\sum_{t=1}^T\frac{1}{\mu_t}\|x^{t+1} -x^t\|^2 \nonumber \\
    &\overset{(c)}{\le} \left(\rho^{-1}L+ \rho^{-1}(L_cM_0+ M_c^2)\beta_T \right)\left(\frac{4M_1}{T+1}+ \frac{4M_0^2\gamma_0}{(T+1)^{1-\delta}}\right) \nonumber \\
    &\overset{(d)}{\le} \left(\rho^{-1}L+ \rho^{-1}(L_cM_0+ M_c^2)\gamma_0(T+1)^{\delta} \right) \left( \frac{4M_1}{T+1}+ \frac{4M_0^2\gamma_0}{(T+1)^{1-\delta}} \right) \nonumber \\
    &\le \left(\rho^{-1}L+ \rho^{-1}(L_cM_0+ M_c^2)\gamma_0 \right) (T+1)^{\delta}\left( \frac{4M_1}{T+1}+ \frac{4M_0^2\gamma_0}{(T+1)^{1-\delta}} \right), 
\end{align*}
where $(a)$ holds because of \eqref{bd_1/mu_t}, $(b)$ holds because $\{\beta_t\}$ is nondecreasing, $(c)$ follows from \eqref{sum_nlip_1}, $(d)$ holds because of Assumption~\ref{assumption_beta}(i), the last inequality holds because $T \ge 1$. Recalling the definition of $\lambda_5$ in \eqref{lam_5}, the inequality \eqref{conclu3_1} follows from the above display.
\end{proof}

We can now establish the following results on complexity and subsequential convergence.
\begin{theorem}[Iteration complexity and subsequential convergence]\label{Thm43iteration}
    Suppose that Assumptions~\ref{assumption 1} and \ref{assumption 2} hold, and $\{\beta_t\}$ is chosen to satisfy Assumption~\ref{assumption_beta} with $\delta\in(0,\frac{1}{2})$. Let $\{x^t\}$ and $\{y^t\}$ be generated by Algorithm~\ref{alg1}. Then, the following statements hold.
    \begin{enumerate}[\rm (i)]
            \item  It holds that for all $T\ge 1$,
            \begin{align}
                 & \min\limits_{1\le t\le T}\left\{ {\rm dist}^2\left(0, \nabla f(x^{t+1}) + \partial g(x^{t+1})+ J_c(x^t)^T\partial h(y^t)\right)+ \|x^{t+1}-x^t\|^2\right\} \nonumber \\
                   &\le \frac{\lambda_6}{T+1}+ \frac{\lambda_7}{(T+1)^{1-\delta}}+ \frac{\lambda_8}{(T+1)^{1-2\delta}}, \label{min_sub3}
            \end{align}
        where
        \begin{align}
                &\lambda_6 := (8L^2 + 4)\mu_{\max}M_1+ 32M_1+ \frac{8\eta_0^2M_c^2M_0^2}{1-2\delta}, \label{lam_6} \\
                &\lambda_7 := (8L^2 + 4)\mu_{\max}M_0^2\gamma_0+ 32M_0^2\gamma_0+ 32\lambda_5M_1, \label{lam_7}  \\
                &\lambda_8 := 32\lambda_5M_0^2\gamma_0, \label{lam_8}
            \end{align}
        and other constants are the same as those in Proposition~\ref{theo_xk+1-xk_2}(i).

        \item  
         Define $b_T := \frac{1}{T}\sum_{k=1}^T\|x^{k+1} - x^k\|^2$ for each $T \ge 1$. Then, for any subsequence $\{b_{T_k}\} \subseteq \{b_T\}$ satisfying $b_{T_{k}} \le b_{T_k-1}$ and $T_k >1$,\footnote{In view of Remark~\ref{construct_sub_sq}, we can construct the subsequence $\{b_{T_k}\}$.} it holds that $\lim_{k\to\infty}\|c(x^{T_k}) - y^{T_k}\| = 0$; moreover, any accumulation point $x^*$ of $\{x^{T_k}\}$ satisfies $c(x^*)\in \dom h$, and if, in addition, \eqref{CQ} holds at $x^*$, then
        \begin{equation}\label{optcon3}
            0 \in \nabla f(x^*)+ \partial g(x^*)+ J_c(x^*)^T\partial h(c(x^*)).
        \end{equation}
    \end{enumerate}
\end{theorem}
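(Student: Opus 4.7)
My plan parallels the template of Theorem~\ref{Thm:alg1}(i), with one key adjustment to accommodate the possibly non-Lipschitz $h$. Starting from \eqref{yh_optcon}, I will pick $\psi_{t+1}\in \partial g(x^{t+1})$ and $\beta_{t-1}(c(x^t)-y^t)\in \partial h(y^t)$, and combine them with the Lipschitz continuity of $\nabla f$ and the bound $\|J_c(\cdot)\|\le M_c$ to derive
\[
{\rm dist}\bigl(0,\nabla f(x^{t+1})+\partial g(x^{t+1})+J_c(x^t)^T\partial h(y^t)\bigr) \le \bigl(L+\tfrac{2}{\mu_t}\bigr)\|x^{t+1}-x^t\| + (\beta_t-\beta_{t-1})M_c\|c(x^t)-y^t\|.
\]
Unlike the Lipschitz setting of Section~\ref{sec41}, I cannot bound $\|c(x^t)-y^t\|$ by $1/\beta_{t-1}$; instead, I will use the uniform estimate $\|c(x^t)-y^t\|\le M_0$ coming from \eqref{bd_xt-yt} and \eqref{M0}. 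Squaring, averaging over $t=1,\ldots,T$, and invoking \eqref{conclu3_1} and \eqref{conclu3_3} of Proposition~\ref{theo_xk+1-xk} to handle the $\mu_t^{-2}\|x^{t+1}-x^t\|^2$ and $\|x^{t+1}-x^t\|^2$ sums, the only remaining contribution is $\frac{1}{T}\sum_{t=1}^T(\beta_t-\beta_{t-1})^2 M_c^2M_0^2$; by Assumption~\ref{assumption_beta}(ii) this is at most $\frac{\eta_0^2M_c^2M_0^2}{T}\sum_{t=1}^T t^{2\delta-2}$, and since $2\delta-2<-1$ precisely when $\delta<\tfrac12$ (this is the technical reason for the restriction $\delta<\tfrac12$), the sum is $O(1)$ and the whole term is $O(1/T)$. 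Passing from the averaged bound to a minimum over $t$ and combining with \eqref{conclu3_3} will then yield \eqref{min_sub3}.

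\textbf{Plan for Part (ii): subsequence setup and identifying $\bar y^*$.} Following Remark~\ref{construct_sub_sq}, I will apply Lemma~\ref{choose_sub_seq} with $a_t:=\mu_t^{-2}\|x^{t+1}-x^t\|^2$ and the complexity bound \eqref{conclu3_1} to deduce $\mu_{T_k}^{-2}\|x^{T_k+1}-x^{T_k}\|^2\to 0$; combined with $\mu_t\le\mu_{\max}$, this will also yield $\|x^{T_k+1}-x^{T_k}\|\to 0$. By compactness of $\dom g$ and the bound \eqref{bd_xt-yt} on $\{y^t\}$, I will pass to a further subsequence so that $(x^{T_k},y^{T_k},x^{T_k+1})\to(x^*,y^*,x^*)$. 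To pin down $y^*$, I will test the prox optimality of $y^{T_k}\in{\rm Prox}_{(1/\beta_{T_k-1})h}(c(x^{T_k}))$ against the competitor ${\rm Proj}_{\dom h}(c(x^{T_k}))\in\dom h$, and, using $h\ge 0$ together with the fact that the continuous function $h\circ{\rm Proj}_{\dom h}\circ c$ is bounded (say by $C_h$) on the compact set $\dom g$, obtain
\[
\|y^{T_k}-{\rm Proj}_{\dom h}(c(x^{T_k}))\|^2 \le \|c(x^{T_k})-y^{T_k}\|^2 - {\rm dist}^2(c(x^{T_k}),\dom h) \le \frac{2C_h}{\beta_{T_k-1}} \to 0,
\]
where the first inequality exploits the variational characterization of the projection onto the closed convex set $\dom h$. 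Continuity of the projection will then give $y^{T_k}\to\bar y^*:={\rm Proj}_{\dom h}(c(x^*))$.

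\textbf{Main obstacle: showing $c(x^*)\in\dom h$, and concluding.} The bound \eqref{bd_xt-yt} alone does not force $\|c(x^{T_k})-y^{T_k}\|\to 0$, so the hardest step will be to show $c(x^*)\in\dom h$, from which $\bar y^*=c(x^*)$ and $\|c(x^{T_k})-y^{T_k}\|\to 0$ both follow. My strategy will be to argue that $x^*$ is a stationary point of the auxiliary problem \eqref{pro_for_proof}; Assumption~\ref{assumption 2} will then promote $x^*$ to a global minimizer, and since Assumption~\ref{assumption 1}(iii) ensures that the optimal value of \eqref{pro_for_proof} equals zero, $c(x^*)\in\dom h$ will follow. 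To implement this, I will divide the first relation of \eqref{yh_optcon} by $\beta_{T_k}$; the terms $\nabla f(x^{T_k})/\beta_{T_k}$ and $2(x^{T_k+1}-x^{T_k})/(\mu_{T_k}\beta_{T_k})$ will vanish in the limit (the latter because $\|x^{T_k+1}-x^{T_k}\|/\mu_{T_k}\to 0$), and after extracting a convergent subsequence of the bounded sequence $\psi_{T_k+1}/\beta_{T_k}$ with limit $\zeta$, I will obtain $\zeta + J_c(x^*)^T(c(x^*)-\bar y^*) = 0$ with $\zeta\in\partial^\infty g(x^*)$ by the very definition of the horizon subdifferential. Turning this identity into the desired stationarity $-J_c(x^*)^T(c(x^*)-\bar y^*)\in N_{\dom g}(x^*)$ for \eqref{pro_for_proof} is the most delicate step; I plan to handle it via a test-point argument in the spirit of the analysis in Theorem~\ref{convergence}, exploiting continuity of $g$ on the closed set $\dom g$ together with the structure of $g$ in the applications of interest (e.g., Examples~\ref{MLP} and \ref{QCQP}), under which $\partial^\infty g(x^*)$ agrees with $N_{\dom g}(x^*)$. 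Once $c(x^*)\in\dom h$ is established, $\|c(x^{T_k})-y^{T_k}\|\to 0$ will hold along the entire subsequence $\{T_k\}$ (since every further subsequential limit must vanish by the argument above), and finally $\mu_{T_k}^{-1}\|x^{T_k+1}-x^{T_k}\|\to 0$ together with $c(x^*)=y^*$ will allow me to invoke Theorem~\ref{convergence} under \eqref{CQ} at $x^*$ to conclude \eqref{optcon3}.
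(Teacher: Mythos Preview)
Your plan for Part~(i) is correct and matches the paper's argument essentially line for line: the paper derives the same pointwise bound \eqref{dist_dubgrad_2} using the uniform estimate $\|c(x^t)-y^t\|\le M_0$, squares and averages, controls $\sum_t t^{2\delta-2}$ via $\delta<\tfrac12$ as in \eqref{sum_1/t}, and then invokes Proposition~\ref{theo_xk+1-xk}.

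For Part~(ii), your identification of $\bar y^*={\rm Proj}_{\dom h}(c(x^*))$ via the three-point inequality for projections onto a closed convex set is valid and is essentially equivalent to the paper's limiting argument for \eqref{proj_u*}. The genuine gap is the step you flag as ``most delicate'': showing that $x^*$ is a stationary point of \eqref{pro_for_proof}. Your route divides the first relation of \eqref{yh_optcon} by $\beta_{T_k}$ and extracts a limit $\zeta\in\partial^\infty g(x^*)$ of $\psi_{T_k+1}/\beta_{T_k}$; you then need $\zeta\in\partial\delta_{\dom g}(x^*)$. Under Assumptions~\ref{assumption 1} and~\ref{assumption 2} alone this inclusion can fail, because $g$ is only assumed continuous on its compact domain, not locally Lipschitz there. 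Concretely, in Example~\ref{QCQP} one has $g=\alpha\|\cdot\|_p^p+\delta_{\{\|\cdot\|_\infty\le r\}}$ with $p\in(0,1)$; at any interior point $x^*$ of the box with some coordinate $x^*_i=0$, the horizon subdifferential $\partial^\infty g(x^*)$ contains the entire $i$-th coordinate axis (since $|\cdot|^p$ has unbounded subgradients near~$0$), whereas $\partial\delta_{\dom g}(x^*)=\{0\}$. So the claim that ``$\partial^\infty g(x^*)$ agrees with $N_{\dom g}(x^*)$'' fails even in your cited example, and the appeal to Theorem~\ref{convergence} does not help since that theorem also works at the level of subdifferentials, not test points.

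The paper circumvents this by passing to the limit in the \emph{variational inequality} defining the argmin in \eqref{get_xk_line} rather than in its first-order condition. Dividing the objective of the $x$-subproblem by $\beta_{T_k}$ and testing against an arbitrary $x\in\dom g$, one obtains an inequality in which $g(x^{T_k+1})/\beta_{T_k}$ and $g(x)/\beta_{T_k}$ appear as \emph{values}; since $g$ is continuous on the compact set $\dom g$ it is bounded there, so both terms vanish in the limit. After also extracting a limit $\hat\tau$ of the bounded sequence $1/(\mu_{T_k}\beta_{T_k})$, the paper arrives at
\[
\hat x\in\Argmin_{x\in\dom g}\big\{\langle J_c(\hat x)^T(c(\hat x)-\hat y),x\rangle+\hat\tau\|x-\hat x\|^2\big\},
\]
whose first-order condition is exactly $0\in J_c(\hat x)^T(c(\hat x)-\hat y)+\partial\delta_{\dom g}(\hat x)$, with no need to relate $\partial^\infty g$ to the normal cone. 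Assumption~\ref{assumption 2} and Assumption~\ref{assumption 1}(iii) then give $c(\hat x)\in\dom h$, and the rest of your plan (concluding $\|c(x^{T_k})-y^{T_k}\|\to 0$ and invoking Theorem~\ref{convergence}) goes through.
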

\begin{remark}[Interpreting \eqref{min_sub3}]\label{rem:explain_cplx}
  Recall that when Assumptions~\ref{assumption 1} and \ref{asm_lip_h} hold, and $g$ is also assumed to be the indicator function of a closed set while $h$ is also assumed to be weakly convex, for the $F^{[t]}$ in \eqref{subsubsub}, we have for all $x\in \dom g$ and all sufficiently large $t$ that
  \[
  \partial F^{[t]}(x) = \nabla f(x) + \partial g(x) + J_c(x)^T\nabla e_{1/\beta_t}h(c(x))\subseteq \nabla f(x) + \partial g(x) + J_c(x)^T\partial h({\rm Prox}_{1/\beta_t}(c(x))),
  \]
  and iteration complexity bound was derived in \cite[Theorem~3.3]{kume2024variable} for finding an $x$ satisfying ${\rm dist}(0,\partial F^{[t]}(x))\le \epsilon$.
  In view of the above display and \eqref{get_uk_line}, we can view \eqref{min_sub3} as an analogue of \cite[Theorem~3.3]{kume2024variable} under Assumptions~\ref{assumption 1} and \ref{assumption 2}.
\end{remark}
\begin{proof}
Using \eqref{yh_optcon}, we see that for each $t\ge 1$,
\begin{align}
      &{\rm dist}(0, \nabla f(x^{t+1})+ \partial g(x^{t+1})+ J_c(x^t)^T\partial h(y^t)) \nonumber \\
    &\le \left\|-\frac{2}{\mu_t}(x^{t+1}-x^t)- \left( \beta_t -\beta_{t-1} \right)J_c(x^t)^T(c(x^t) -y^t) +\nabla f(x^{t+1}) -\nabla f(x^t) \right\|      \nonumber\\
    &\le \frac{2}{\mu_t} \|x^{t+1} -x^t\|+ (\beta_t -\beta_{t-1})\| J_c(x^t)\| \|c(x^t) -y^t\| + \| \nabla f(x^{t+1}) -\nabla f(x^t)\| \nonumber \\
    &\overset{(a)}{\le} \frac{2}{\mu_t} \|x^{t+1} -x^t\|+ \eta_0t^{\delta-1}M_cM_0+ \| \nabla f(x^{t+1}) -\nabla f(x^t)\| \nonumber \\
    &\overset{(b)}{\le} \left(L + \frac{2}{\mu_t}\right) \|x^{t+1} -x^t\|+ \eta_0t^{\delta-1}M_cM_0, \label{dist_dubgrad_2}
\end{align}
where $(a)$ holds because of Assumption~\ref{assumption_beta}(ii), Assumption~\ref{assumption 1}(ii), \eqref{bd_xt-yt} and the definition of $M_0$ in \eqref{M0}, and $(b)$ holds because of Assumption~\ref{assumption 1}(i). 

Next, notice that
\begin{align}
    &\frac{1}{T} \sum_{t=1}^T \frac{1}{t^{2-2\delta}} = \frac{1}{T}\left(1+\sum_{t=2}^T t^{2\delta -2} \right) \le \frac{1}{T} \left(1+ \sum_{t=2}^T \int_{t-1}^t s^{2\delta -2} ds \right) \nonumber \\
    &= \frac{1}{T}\left( 1+ \frac{1- T^{2\delta-1}}{1-2\delta}\right) \le \frac{1}{T}\left( 1+ \frac{1}{1-2\delta}\right) \le \frac{2}{(1-2\delta)T} \le \frac{4}{(1-2\delta)(T+1)}, \label{sum_1/t}
\end{align}
where the first inequality holds because $x\mapsto x^{2\delta -2}$ is decreasing for $x>0$ since $\delta \in (0,1/2)$, and the last inequality holds because $2T \ge T+1$.
Then we can deduce from \eqref{dist_dubgrad_2} that
\begin{align}
    &\frac{1}{T}\sum_{t=1}^T {\rm dist}^2(0, \nabla f(x^{t+1})+ \partial g(x^{t+1})+ J_c(x^t)^T\partial h(y^t))   \nonumber \\
    & \le \frac{1}{T}\sum_{t=1}^T \left(\left(L + \frac{2}{\mu_t}\right) \|x^{t+1} -x^t\|+ \eta_0t^{\delta-1}M_cM_0 \right)^2      \nonumber \\
    &\le \frac{1}{T}\sum_{t=1}^T  2\left(L+\frac{2}{\mu_t}\right)^2\|x^{t+1} -x^t\|^2 + \frac{1}{T}\sum_{t=1}^T 2\eta_0^2t^{2\delta-2}M_c^2M_0^2 \nonumber \\
    &= \frac{1}{T}\sum_{t=1}^T2\left(L^2+ \frac{4}{\mu_t}+ \frac{4}{\mu_t^2} \right) \|x^{t+1} -x^t\|^2 +  \frac{1}{T}\sum_{t=1}^T 2\eta_0^2t^{2\delta-2}M_c^2M_0^2 \nonumber \\
    &\overset{(a)}{\le} \frac{1}{T}\sum_{t=1}^T2\left(L^2+ \frac{4}{\mu_t}+ \frac{4}{\mu_t^2} \right) \|x^{t+1} -x^t\|^2  +\frac{8\eta_0^2M_c^2M_0^2}{(1-2\delta)(T+1)},  \label{sum_nlip_dist}
\end{align}
where we used \eqref{sum_1/t} in $(a)$.

Then, it holds that
\begin{align}
    &\min_{1\le t\le T} \left\{{\rm dist}^2(0, \nabla f(x^{t+1})+ \partial g(x^{t+1})+ J_c(x^t)^T\partial h(y^t)) +\|x^{t+1} -x^t\|^2\right\} \nonumber \\
    &\le \frac{1}{T} \sum_{t=1}^T {\rm dist}^2(0, \nabla f(x^{t+1})+ \partial g(x^{t+1})+ J_c(x^t)^T\partial h(y^t)) +\frac{1}{T}\sum_{t=1}^T\|x^{t+1} -x^t\|^2   \nonumber \\
    &\overset{(a)}{\le} \frac{1}{T}\sum_{t=1}^T\left(2L^2 + 1 + \frac{8}{\mu_t}+ \frac{8}{\mu_t^2} \right) \|x^{t+1} -x^t\|^2  + \frac{8\eta_0 ^2M_c^2M_0^2}{(1-2\delta)(T+1)} \nonumber \\
    &\overset{(b)}\le \frac{(8L^2+4)\mu_{\max}M_1}{T+1}+ \frac{(8L^2+4)\mu_{\max}M_0^2\gamma_0 }{(T+1)^{1-\delta}} + \frac{32M_1}{T+1}+ \frac{32M_0^2\gamma_0}{(T+1)^{1-\delta}} \nonumber \\
    &\ \ \ \ + \frac{32\lambda_5M_1}{(T+1)^{1-\delta}}+ \frac{32\lambda_5M_0^2\gamma_0}{(T+1)^{1-2\delta}}+\frac{8\eta_0^2M_c^2M_0^2}{(1-2\delta)(T+1)} \nonumber \\
    &= \frac{\lambda_6}{T+1}+ \frac{\lambda_7}{(T+1)^{1-\delta}}+ \frac{\lambda_8}{(T+1)^{1-2\delta}}, \nonumber
\end{align}
where $(a)$ follows from \eqref{sum_nlip_dist}, $(b)$ holds because of Proposition~\ref{theo_xk+1-xk} and the last equality holds in view of the definitions of $\lambda_6$, $\lambda_7$ and $\lambda_8$ in \eqref{lam_6}, \eqref{lam_7} and \eqref{lam_8}, respectively.
This proves \eqref{min_sub3} and hence establishes item (i).

We now prove item (ii). We first fix a subsequence $\{b_{T_k}\} \subseteq \{b_T\}$ satisfying $b_{T_{k}} \le b_{T_k-1}$ and $T_k >1$. From Remark~\ref{construct_sub_sq} (where we let $a_t := \mu_t^{-2}\|x^{t+1} - x^t\|^2$) and \eqref{conclu_nonlip_1}, we obtain
\begin{align}
    & \frac{1}{\mu_{T_k}^2} \|x^{T_k+1} -x^{T_k}\|^2 \le \frac{4\lambda_5M_1}{T_k}+ \frac{4\lambda_5M_0^2\gamma_0^2}{T_k^{1-\delta}}. \label{tem3}
\end{align}
Hence, $\mu^{-1}_{T_k}(x^{T_{k}+1}- x^{T_k})\to 0$. Since $\mu_t \le \mu_{\max}$ for all $t$, we also deduce that $x^{T_{k}+1}- x^{T_k}\to 0$. 

We next show that $\lim_{k\to\infty}\|c(x^{T_k}) - y^{T_k}\| = 0$. Since $\dom g$ is bounded by Assumption~\ref{assumption 2} and \eqref{bd_xt-yt} holds, we see that $\{(x^{T_k},y^{T_k})\}$ is bounded. Thus, it suffices to show that for any accumulation point $(\hat{x},\hat{y})$ of $\{(x^{T_k},y^{T_k})\}$, we have $c(\hat{x}) = \hat{y}$.

To this end, fix any accumulation point $(\hat{x},\hat{y})$ of $\{(x^{T_k},y^{T_k})\}$. Note that we have $0 < \frac{1}{\mu_t\beta_t} \le \frac{L\rho^{-1}}{\beta_t}+ \rho^{-1} (L_cM_0+ M_c^2)$ by \eqref{bd_1/mu_t}. Then we see that the sequence $\left\{\left(x^{T_k}, y^{T_k}, 1/(\mu_{T_k}\beta_{T_k})\right)\right\}$ is bounded. By passing to a further subsequence if necessary, we may assume that $\left(x^{T_k}, y^{T_k}, 1/(\mu_{T_k}\beta_{T_k})\right)\to (\hat x, \hat y, \hat \tau)$ for some $\hat \tau\ge 0$. 

Recall from \eqref{get_xk_line} that for all $t \ge 0$,
\begin{align}
    \!\!\!x^{t+1}\in \Argmin_{x\in \mathbb{R}^n} \left\{\left\langle  \frac{1}{\beta_t}\nabla f(x^t) + J_c(x^t)^T(c(x^t) -y^t), x\right\rangle + \frac{1}{\beta_t\mu_t}\|x - x^t\|^2  + \frac{1}{\beta_t}g(x)\right\}, \label{xt_proof}
\end{align}
and from \eqref{get_uk_line} that for all $t \ge 1$,
\begin{align}\label{yt_proof}
    y^t &\in  \Argmin_{y\in \mathbb{R}^m} \left\{\frac{1}{\beta_{t-1}} h(y) + \frac{1}{2}\|c(x^t) -y\|^2\right\}.
\end{align}
Now, from \eqref{xt_proof}, we have for any $x\in \dom g$ that
\begin{align*}
&\left\langle  \frac{1}{\beta_{T_k}}\nabla f(x^{T_k}) + J_c(x^{T_k})^T(c(x^{T_k}) -y^{T_k}), x^{T_k+1}\right\rangle + \frac{1}{\beta_{T_k}\mu_{T_k}}\|x^{T_k+1} - x^{T_k}\|^2  + \frac{1}{\beta_{T_k}}g(x^{T_k+1})\\
&\le \left\langle  \frac{1}{\beta_{T_k}}\nabla f(x^{T_k}) + J_c(x^{T_k})^T(c(x^{T_k}) -y^{T_k}), x\right\rangle + \frac{1}{\beta_{T_k}\mu_{T_k}}\|x - x^{T_k}\|^2  + \frac{1}{\beta_{T_k}}g(x).
\end{align*}
Since $x^{T_{k}+1}- x^{T_k}\to 0$ and $\dom g$ is closed (see Assumption~\ref{assumption 1}(iii)), we know that $\lim_{k\to \infty}x^{T_{k}+1}=\lim_{k\to\infty} x^{T_k} = \hat x\in \dom g$. Passing to the limit on both sides of the above inequality and noting that $\beta_{T_k}\to \infty$ and invoking the continuity of $g$ on its closed domain, we deduce that
\begin{align*}
\left\langle J_c(\hat{x})^T(c(\hat{x}) -\hat{y}), \hat{x}\right\rangle \le \left\langle J_c(\hat{x})^T(c(\hat{x}) -\hat{y}), x\right\rangle + \hat\tau\|x - \hat{x}\|^2.
\end{align*}
Since this inequality is true for any $x\in \dom g$, we conclude that
\begin{align}
    \hat{x} &\in \Argmin_{x\in \dom g} \left\{\left\langle J_c(\hat{x})^T(c(\hat{x}) -\hat{y}), x  \right\rangle+ \hat\tau\|x -\hat{x}\|^2\right\}.\label{LO_x}
\end{align}
Using \eqref{yt_proof} and the closedness of $\dom h$ (see Assumption~\ref{assumption 1}(iii)), we deduce similarly that
\begin{align}\label{proj_u*}
     \hat{y} &\in \Argmin_{y\in \dom h} \left\{\frac{1}{2}\|c(\hat{x}) -y\|^2\right\} = {\rm Proj}_{\dom h}(c(\hat{x})).
\end{align}

From \eqref{LO_x}, we obtain
\begin{align*}
    0 \in J_c(\hat{x})^T(c(\hat{x}) -\hat{y}) +\partial\delta_{\dom g}(\hat{x}) &= J_c(\hat{x})^T(c(\hat{x}) -{\rm Proj}_{\dom h}(c(\hat{x})))+\partial \delta_{\dom g}(\hat{x})\\
    & = \partial\left( \frac12{\rm dist}(c(\cdot),\dom h)^2 + \delta_{\dom g}(\cdot)\right)(\hat{x}),
\end{align*}
where the first equality follows from \eqref{proj_u*} and the fact that $\dom h$ is closed and convex, and the second equality follows from \cite[Exercise~8.8(c)]{rockafellar2009variational}.
The above display shows that $\hat{x}$ is a stationary point of \eqref{pro_for_proof}. Hence, $\hat{x}$ is a global minimizer of \eqref{pro_for_proof} thanks to Assumption~\ref{assumption 2}.
Finally, in view of Assumption~\ref{assumption 1}(iii), we see that the optimal value of \eqref{pro_for_proof} is 0. Consequently, we have
\begin{equation}\label{cxy_equals_0}
    \| c(\hat{x}) - \hat{y}\| = {\rm dist}(c(\hat{x}), \dom h) = 0,
\end{equation}
where the first equality follows from \eqref{proj_u*}. Thus, we have shown that $c(\hat{x}) = \hat{y}$ for any accumulation point $(\hat{x},\hat{y})$ of the bounded sequence $\{(x^{T_k},y^{T_k})\}$. Consequently, $\lim_{k\to\infty}\|c(x^{T_k}) - y^{T_k}\| = 0$.

Next, suppose that $x^*$ is an accumulation point of $\{x^{T_k}\}$. Then using $\lim_{k\to\infty}\|c(x^{T_k}) - y^{T_k}\| = 0$ together with the fact that $\{y^{T_k}\}\subseteq \dom h$ and the closedness of $\dom h$, we see that $c(x^*)\in \dom h$. 

Finally, assume further that \eqref{CQ} holds at $x^*$. By passing to a further subsequence if necessary, we may assume that the bounded sequence $\big(x^{T_k}, y^{T_k}, 1/(\mu_{T_k}\beta_{T_k})\big)\to (x^*, y^*, \tau^*)$ for some $y^*\in \dom h$ and $\tau^*\ge 0$. Repeating the argument from \eqref{xt_proof} to \eqref{cxy_equals_0} with $(x^*, y^*, \tau^*)$ in place of $(\hat x, \hat y, \hat \tau)$ shows that $c(x^*) = y^*$. In view of this and \eqref{tem3}, we can now invoke Theorem~\ref{convergence} to show that \eqref{optcon3} holds. 
\end{proof}

%\todo[inline]{TK: Add a remark to compare with existing works, if any.}

\section{Numerical experiments}\label{sec5}

We perform numerical experiments to study the performance of SDCAM$_{\mathds{1}\ell}$. We focus on the settings of Sections~\ref{sec:nonlip} and \ref{sec:43}, which are beyond the reach of the existing literature of single-loop algorithms. All the codes are written in Python 3.13 on a 64 bit PC with AMD Ryzen 7 7435H (16 CPUs), ~3.1GHz and 16GB of RAM.

\subsection{Non-Lipschitz sparse MLP regression}
In this section, we perform numerical experiments for SDCAM$_{\mathds{1}\ell}$ on an instance of Example~\ref{MLP}.

{\bf Data generation.} We use the MNIST handwritten digits dataset \cite{lecun1998gradient}. The original dataset consists of 60000 training images and 10000 test images, each of size $28 \times 28$ pixels. We preprocess the data as follows. First, we flatten each image into a vector of dimension $n_0 = 784$. Then, we rescale the pixel values to the range $[0,1]$ by dividing by 255. For the target values, we use the digit labels ($0-9$) and normalize them to $[-1, 1]$.\footnote{Precisely, we use the mapping $x\mapsto (x-4.5)/4.5$.} We randomly select $m = 1000$ samples from the training set for our experiments. We consider a 3-layer MLP with $(n_0, n_1, n_2) = (784, 128, 64)$. The activation functions for the hidden layers are chosen to be the hyperbolic tangent function ($\sigma_{\tanh}(u) = \frac{e^u - e^{-u}}{e^u+ e^{-u}}$). We let $p = 0.5$ and $\lambda = 0.05$ in \eqref{MLP_model}. 

{\bf Algorithm settings.} We choose $\mu_{\max} = 10^7$, $\mu_{-1} = 0.01$, $\rho = 0.5$ and $\eta = 2$. We generate an $\tilde x$ via Xavier initialization \cite{glorot2010understanding} and project it onto the $C$ defined in \eqref{def_C} to generate $x^0$ and let $y^0 = 0  $ in Algorithm~\ref{alg1}. We let $\delta = 0.5$ and then, at each iteration $t$, let $\beta_t = \beta_0(t+1)^{\delta}$ for some $\beta_0 > 0$: our numerical experiment aims at studying the numerical performance of SDCAM$_{\mathds{1}\ell}$ under different values of $\beta_0$ that will be specified below. We terminate the algorithm when the number of iterations exceeds 3000.

{\bf Numerical results.} We generate an instance described above and compare the performance of SDCAM$_{\mathds{1}\ell}$ with $\beta_0\in \{5\times 10^{-6}, 10^{-5}, 1.5\times 10^{-5}\}$ in Figure~\ref{fig:MLP_results}. We see that a smaller $\beta_0$ leads to a better objective value and a smaller $\|x^{t+1}-x^t\|/\mu_t$ at termination: recall that the latter quantity has been instrumental in our complexity analysis, see Proposition~\ref{theo_xk+1-xk_2}.

\begin{figure}[htbp]
    \centering
    \begin{subfigure}[b]{0.49\linewidth}
        \centering
        \includegraphics[width=\linewidth]{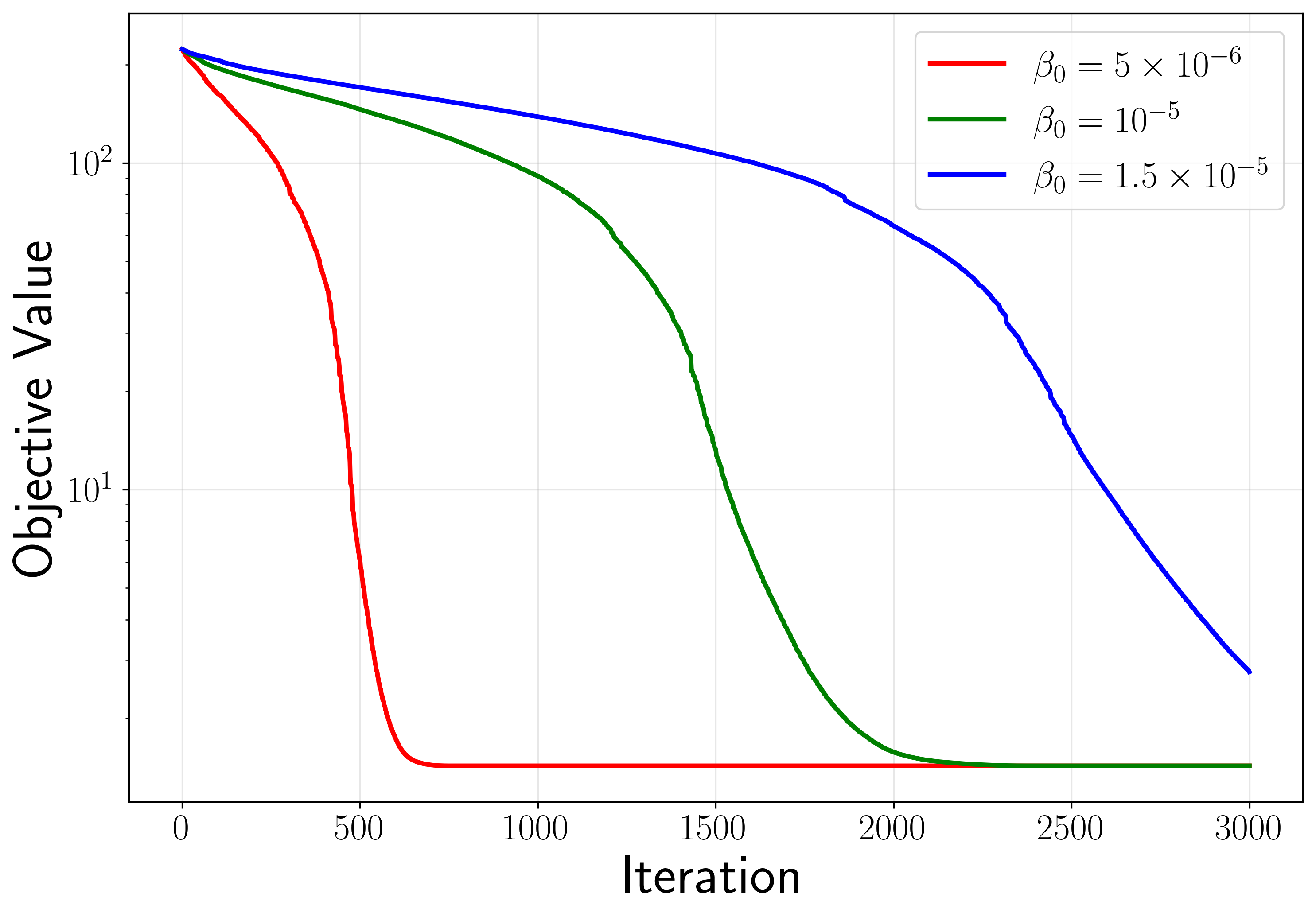}
        % \subcaption{Objective function}
        \label{fig:MLP_obj_func}
    \end{subfigure}
    \hfill
    \begin{subfigure}[b]{0.49\linewidth}
        \centering
        \includegraphics[width=\linewidth]{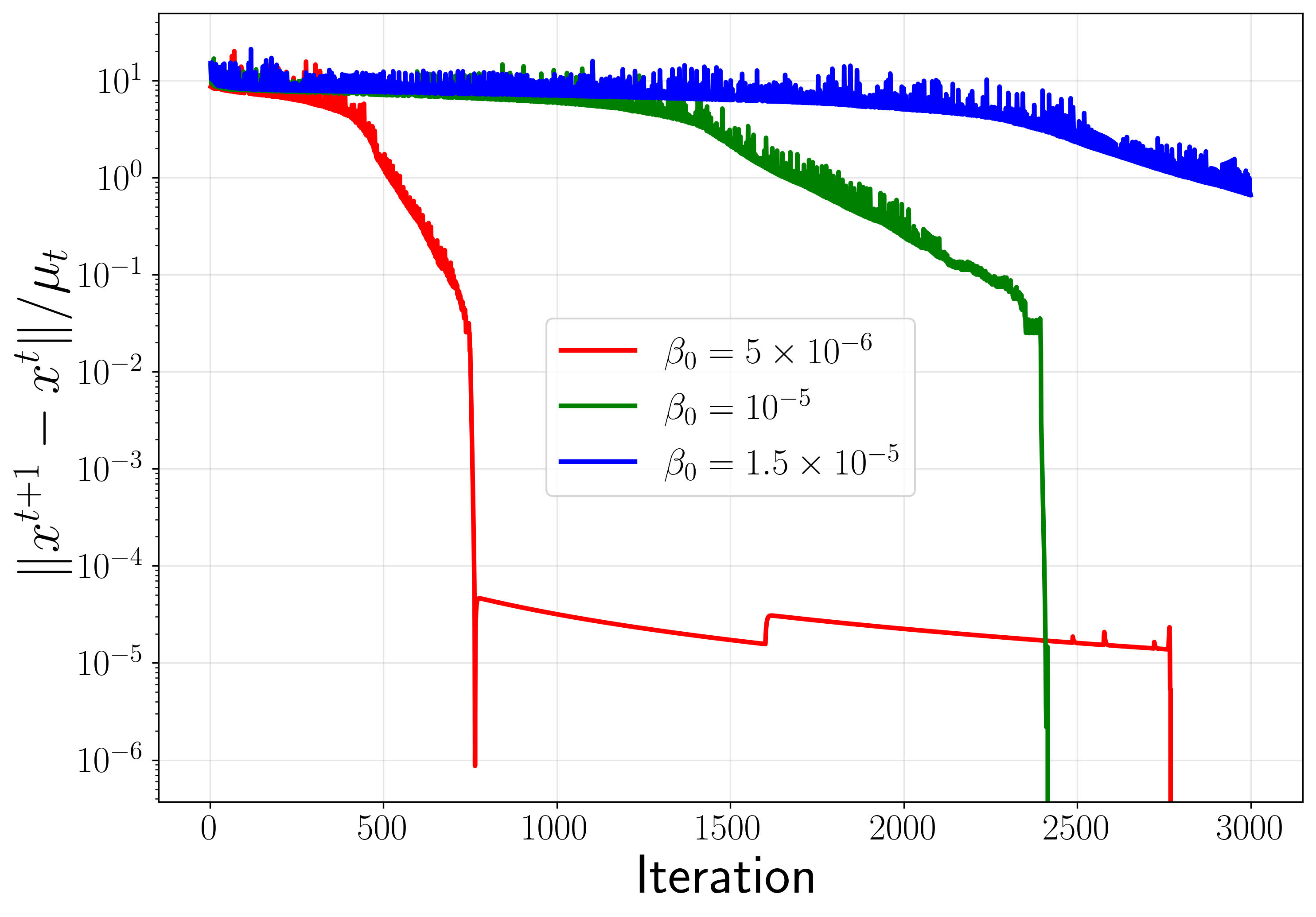}
        % \subcaption{$\|c(x^t)-y^t\|$}
        \label{fig:MLP_cx-y}
    \end{subfigure}
    % \vspace{0.5cm}
    % \begin{subfigure}[b]{0.48\linewidth}
    %     \centering
    %     \includegraphics[width=\linewidth]{MLP_x_diff_comparison.png}
    %     \subcaption{$(1/\mu_t)\|x^{t+1} -x^t\|$}
    %     \label{fig:mu_x_diff}
    % \end{subfigure}
    
    \caption{Numerical performances of SDCAM$_{\mathds{1}\ell}$ with $\beta_0\in \{5\times 10^{-6}, 10^{-5}, 1.5\times 10^{-5}\}$ for sparse MLP regression.}
    \label{fig:MLP_results}
\end{figure}

\subsection{A penalized QCQP}
In this section, we perform numerical experiments for SDCAM$_{\mathds{1}\ell}$ on an instance of Example~\ref{QCQP}. 

{\bf Data generation.}  We let $Q_0$ be the identity matrix, and generate $b_0$ with standard Gaussian entries and scale it by a factor of ${\rm scale_0} = 5$. Specifically, $b_0 \sim {\rm scale_0}\cdot {\cal N}(0,I_n)$. For each $i = 1,\ldots,m$, we set $Q_i = U_iD_iU_i^T$, where $U_i$ is an $n \times n$ random orthogonal matrix\footnote{Specifically, we first generate a matrix $W_i$ with standard Gaussian entries and then perform a QR decomposition on $W_i$ to obtain the orthogonal matrix $U_i$.} and $D_i$ is a diagonal matrix whose diagonal elements are uniformly distributed in $[0,5]$. In addition, we let $\alpha = 0.05$, and for each $i =1, \ldots, m$, we set $b_i = 0$ and $r_i =- \frac{1}{4} \bar{x}^T Q_i\bar{x}$, where $\bar{x} \in \Argmin~ \frac{1}{2}\|x+b_0\|^2+ 0.05\|x\|_p^p$.\footnote{We minimize $x\mapsto \frac{1}{2}\|x+b_0\|^2+ 0.05\|x\|_p^p$ approximately via a root-finding scheme (see, e.g., \cite{GZLHY13}) and take the approximate minimizer obtained as $\bar x$.} Finally, we set $r = \|\bar{x}\|_{\infty}$ and $p = 0.8$.

{\bf Algorithm settings.} We choose $\mu_{\max} = 10^7$, $\mu_{-1} =1$, $\rho = 0.8$ and $\eta =1.2$ in Algorithm~\ref{alg1}. In addition, we let $\delta = 0.3$ and then, at each iteration $t$, let $\beta_t = \beta_0(t+1)^{\delta}$ for some $\beta_0 > 0$: our numerical experiment aims at studying the numerical performances of SDCAM$_{\mathds{1}\ell}$ under different values of $\beta_0$ that will be specified below.

We solve subproblem \eqref{get_xk_line} approximately through a root-finding scheme (see, e.g., \cite{GZLHY13}). We choose $x^0 = -b_0$ and $y^0 =0$. We terminate the algorithm when the number of iterations exceeds 3000.

{\bf Numerical results.} We consider $(n,m) = (1000, 100)$ and generate an instance described above. We compare the performances of SDCAM$_{\mathds{1}\ell}$ with $\beta_0\in \left\{10^{-4}, 10^{-2},1 \right\}$ in Figure~\ref{fig:numerical_results}. We see that a larger $\beta_0$ leads to a smaller feasibility violation at termination, while a smaller $\beta_0$ leads to a smaller $\|x^{t+1}-x^t\|/\mu_t$ at termination: recall that the latter quantity has been instrumental in our complexity analysis, see Proposition~\ref{theo_xk+1-xk}.
\begin{figure}[!htbp]
    \centering
    \begin{subfigure}{0.49\textwidth}
        \centering
        \includegraphics[width=\linewidth]{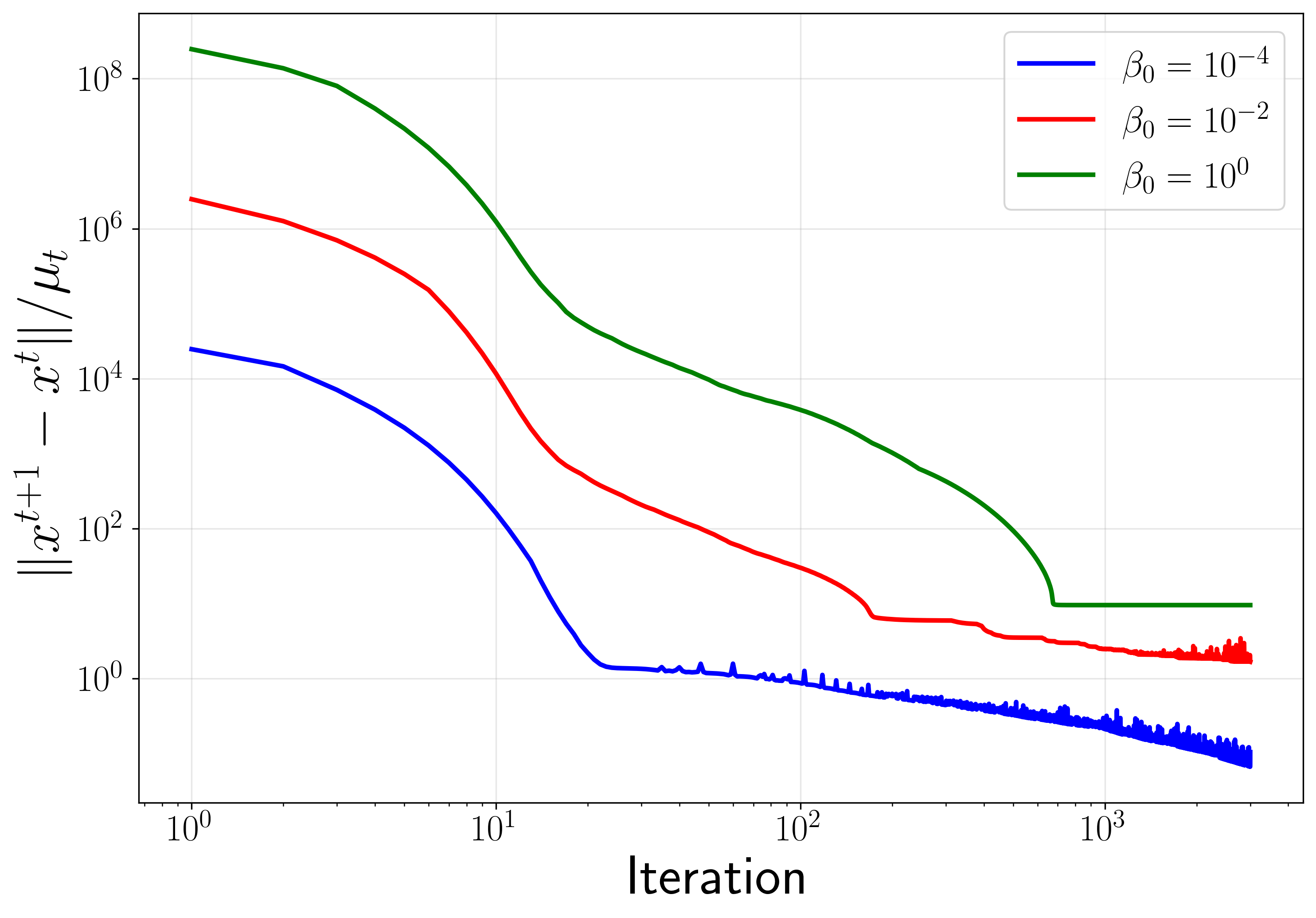}
        % \caption{
        %     A concise description of the first figure.
        % }
        % \label{fig:sub1}
    \end{subfigure}
    \hfill
    \begin{subfigure}{0.49\textwidth}
        \centering
        \includegraphics[width=\linewidth]{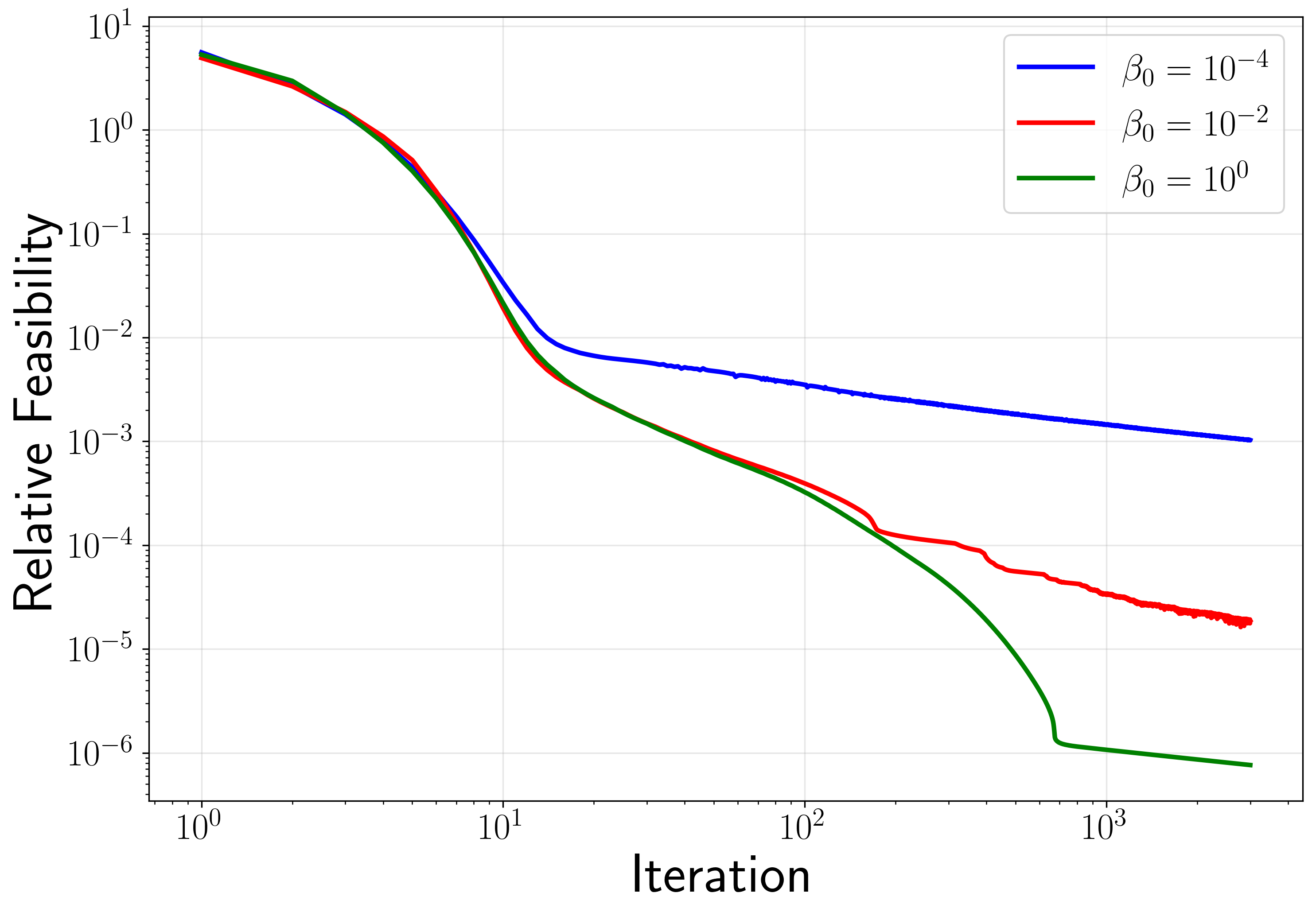} 
        % \caption{
        %     A concise description of the second figure.
        % }
        % \label{fig:sub2}
    \end{subfigure}
     \caption{\small
        Numerical performances of SDCAM$_{\mathds{1}\ell}$ with $\beta_0\in \{10^{-4}, 10^{-2},1 \}$ for a penalied QCQP. Here, in the second figure, the relative feasibility at the $t$-th iteration is defined as $\| \max\{c(x^t), 0\} / \max\{r_{\rm ref}, 1\}\|$, where $r_{\rm ref} = \begin{bmatrix}|r_1|&\cdots &|r_m|\end{bmatrix}^T$ and the $\max$ and division are performed element-wise.
     }
    \label{fig:numerical_results} 
\end{figure}

\vspace{0.2 cm}

\noindent {\bf Acknowledgements.} The second author is supported in part by JSPS Grant-in-Aid for Research Activity Start-up 	24K23853. The third author is supported in part by the Hong Kong Research Grants Council PolyU 15300423. The fourth author is supported in part by JSPS Grant-in-Aid for Scientific Research (B) JP23H03351.

\noindent {\bf Data availability.} The codes for generating the random data and implementing the
algorithms in the numerical section are available at \url{https://github.com/ahhheee2023/SDCAM_1L}.

\noindent {\bf Competing interests.} The authors declare no competing interests.

\end{document}